\documentclass[english]{amsart}
\usepackage[hmarginratio={1:1},vmarginratio={1:1},lmargin=60.0pt,tmargin=50.0pt]{geometry}
\usepackage[T1]{fontenc}
\usepackage[utf8]{inputenc}
\usepackage{csquotes}
\usepackage{float}
\usepackage{amsmath}
\usepackage{amsthm}
\usepackage{amssymb}
\usepackage{thmtools}
\usepackage{hyperref}
\usepackage{tikz}
\usetikzlibrary{arrows.meta}
\usepackage{xcolor}
\usepackage{biblatex}
\usepackage{ytableau}
\usepackage{subcaption}
\usepackage{graphicx}

\makeatletter
\theoremstyle{plain}
\newtheorem{thm}{\protect\theoremname}[section]
\newtheorem*{thm*}{\protect\theoremname}
\theoremstyle{definition}
\newtheorem{example}[thm]{\protect\examplename}
\theoremstyle{plain}
\newtheorem{prop}[thm]{\protect\propositionname}
\theoremstyle{plain}
\newtheorem{cor}[thm]{\protect\corname}
\theoremstyle{plain}
\newtheorem{lem}[thm]{\protect\lemmaname}
\theoremstyle{plain}

\theoremstyle{definition}
\newtheorem{problem}[thm]{\protect\problemname}
\theoremstyle{definition}
\newtheorem{optproblem}[thm]{\protect\optproblemname}
\theoremstyle{remark}
\newtheorem{rem}[thm]{\protect\remarkname}
\theoremstyle{definition}
\newtheorem{definition}[thm]{\protect\definitionname}

\makeatother

\usepackage{babel}
\providecommand{\conjecturename}{Conjecture}
\providecommand{\examplename}{Example}
\providecommand{\lemmaname}{Lemma}
\providecommand{\problemname}{Question}
\providecommand{\optproblemname}{Problem}
\providecommand{\propositionname}{Proposition}
\providecommand{\remarkname}{Remark}
\providecommand{\theoremname}{Theorem}
\providecommand{\corname}{Corollary}
\providecommand{\definitionname}{Definition}

\DeclareMathOperator{\im}{im}

\begin{document}
\title{Kazhdan--Lusztig basis and Optimization}

\author{Tom Goertzen and Geordie Williamson}
\address{T.G.: School of Mathematics and Statistics, University of Sydney, NSW 2006, Australia}
\email{tom.goertzen@sydney.edu.au}

\address{G.W.: School of Mathematics and Statistics, University of Sydney, NSW 2006, Australia}
\email{g.williamson@sydney.edu.au}
\date{}
\begin{abstract}
We describe a conjectural approach to obtaining canonical bases of the Hecke algebra at $q=1$ via continuous quadratic optimization. We focus on Specht modules $S^\lambda$ and proper cones inside $S^\lambda$ that are invariant under the action of $1+s$ for all simple reflections $s\in S$. We show that there are unique minimal and maximal cones invariant under all  $1+s$. For hook shapes, two-column shapes, and partitions of the form $(n-2,2)$, we prove that the Kazhdan--Lusztig basis spans this maximal cone. More generally, we define an optimization problem over bases that are unitriangular with respect to the polytabloid basis, subject to the constraint that the operators $1+s$ act non-negatively. We prove that the feasible region forms a compact semialgebraic set, and interpret it in terms of a hierarchy of invariant cones under all $1+s$. We demonstrate that minimizing the trace of the Gram matrix uniquely recovers Young's seminormal basis. Furthermore, we verify computationally that maximization uniquely recovers the Kazhdan--Lusztig basis for all partitions of $n\leq 7$. In higher ranks, the optimization detects deviations from the Kazhdan--Lusztig basis and may favour other natural positive bases, such as the Springer basis or $p$-canonical bases. Finally, we extend this framework to irreducible representations of $\mathfrak{sl}_n$. We observe that the Gelfand--Tsetlin basis corresponds to the unique minimizer, and we conjecture that the canonical basis corresponds to the maximum in small ranks.
\end{abstract}

\subjclass[2020]{Primary: 20C30; Secondary: 20C08, 05E10, 90C20.}
\keywords{Kazhdan--Lusztig basis, Specht modules, Quadratic Optimization, Invariant Cones}

\maketitle

\section{Introduction}

Canonical bases are central objects in modern representation theory. In most settings, they are obtained as the unique solutions of two constraints: ``self-duality'' and ``asymptotic orthogonality''. Typically, they also admit deep connections to representation theory and geometry. These connections explain their lasting appeal, and also yield deep properties of these bases, such as positivity and unimodality.

It is an interesting and important problem to ask whether there exist other ways of defining, computing, or understanding canonical bases. A prominent example of the fruitfulness of this question is the theory of cluster algebras, which arose from Fomin and Zelevinsky's attempts to understand multiplicative properties of the dual canonical basis of quantum groups \cite{ClusterAlgebras}. Another example, which was a central motivation for this work, is provided by Stembridge's conjectures on admissible $W$-graphs \cite{Stembridge2008}. These conjectures yield a simple combinatorial way of recovering the $W$-graphs of left-cell representations of symmetric groups, without computing Kazhdan--Lusztig polynomials. Stembridge's conjectures were recently resolved in type $A$ in important work of Nguyen \cite{Nguyen2020}, who proved that the set of admissible cells coincides with the set of Kazhdan--Lusztig left cells.

The main point of this paper is to advertise the fact that canonical bases are interesting from the point of view of optimization. We concentrate on the Kazhdan--Lusztig basis, introduced in \cite{KazhdanLusztig79}, and find various settings in which this basis either provably or experimentally solves an optimization problem. An interesting feature of our approach is that we often incorporate deep properties of the basis into the \emph{constraints} of the optimization problem. For example, we prove in several settings that the Kazhdan--Lusztig basis is the ``most efficient'' basis which is positive in the standard basis and has positive structure constants.

We are not able to prove much in great generality, but we do think the observation is an important one. The paper contains many open problems which might be tractable. We suspect that the interested reader could go much further with the right idea.

\subsection{Main results}

Our primary focus is in type $A$, specifically the relationship between the Kazhdan--Lusztig left cell representation and Young's seminormal representation within the Specht module $S^{\lambda}$. Both bases share two critical features:
\begin{enumerate}
    \item The transition matrix from the standard polytabloid basis is upper unitriangular, with respect to the last letter order.
    \item The action of the operators $1+s$ for $s \in S$ is non-negative in these bases.
\end{enumerate}

We formulate a constrained optimization problem on the set of such bases. Let $A$ be an upper unitriangular matrix representing a change of basis from the polytabloid basis. We define the objective function
\[
f(A) = \mathrm{Tr}(A^t G A),
\]
where $G$ is the Gram matrix of the invariant form on $S^{\lambda}$. We optimize $f(A)$ subject to the constraint $A^{-1}(1+s)A \ge 0$. Our results establish a duality between these two canonical bases.

\subsubsection*{Minimization (Young's Seminormal Form)}
We prove that Young's seminormal basis is the unique minimizer of this optimization problem.

\begin{thm*}
The function $f(A)$ has a unique minimum under the non-negativity constraints. This minimum is achieved by Young's seminormal basis.
\end{thm*}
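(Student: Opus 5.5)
Write the new basis as $b_T = \sum_{T'} A_{T'T} e_{T'}$, where $(e_T)$ is the polytabloid basis and $A$ is upper unitriangular for the last letter order. Then $f(A)=\sum_T \langle b_T,b_T\rangle$ splits into a sum of squared norms, one for each basis vector. The plan is to show two things. First, every feasible $b_T$ satisfies $\langle b_T,b_T\rangle \ge \langle y_T,y_T\rangle$, where $y_T$ is the seminormal vector normalised to have coefficient $1$ on $e_T$. Second, equality forces $b_T=y_T$.

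The starting point is the classical description of the seminormal basis.
\begin{itemize}
\item It is orthogonal for the invariant form.
\item It is unitriangular with respect to the polytabloids in the last letter order, so $y_T = e_T + (\text{lower terms})$ and the $y_{T'}$ with $T'<T$ span the same flag as the $e_{T'}$.
\item It diagonalises the Jucys--Murphy elements, which is what makes $y_T$ the orthogonal projection of $e_T$ onto the orthogonal complement of $\mathrm{span}\{e_{T'} : T'<T\}$.
\end{itemize}
Hence for any vector $b_T = e_T + \sum_{T'<T} c_{T'}e_{T'}$ we get, by Pythagoras,
\[
\langle b_T,b_T\rangle = \langle y_T,y_T\rangle + \Bigl\| \sum_{T'<T} c'_{T'} y_{T'} \Bigr\|^2 \ge \langle y_T,y_T\rangle .
\]
Equality holds exactly when $b_T=y_T$. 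Summing over $T$ shows that $A_Y$, the transition matrix to the seminormal basis, is the unique global minimiser of $f$ over all unitriangular $A$, with no constraints at all. This is the Gram--Schmidt / Cholesky viewpoint: $A_Y$ is the inverse Cholesky factor of $G$, and $f$ is strictly convex in the columns of $A$.

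It then remains to check that $A_Y$ is feasible, i.e.\ that $A_Y^{-1}(1+s)A_Y \ge 0$ for every simple reflection $s=s_i$. I would read this off from Young's seminormal formulas. The element $s_i$ acts on $y_T$ as follows:
\begin{itemize}
\item If $i,i+1$ lie in the same row of $T$, then $s_i y_T = y_T$. If they lie in the same column, then $s_i y_T = -y_T$.
\item Otherwise $s_i y_T = \tfrac{1}{r}y_T + (\text{coefficient})\, y_{s_iT}$, where $r$ is the axial distance.
\end{itemize}
So $1+s_i$ acts by $2$, by $0$, or, in the remaining case, by the $2\times 2$ block with diagonal entries $1\pm 1/r$ and off-diagonal entries $1$ and $1-1/r^2$. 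Whether the off-diagonal entries are exactly these depends on the normalisation chosen. Here the normalisation is fixed by unitriangularity against polytabloids, so I must identify the off-diagonal entries in that specific scaling.

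The main obstacle is precisely this scaling issue. The classical formulas are usually written for the orthogonal form or for Young's natural-form normalisation, so I would need to confirm that, with leading coefficient $1$ on $e_T$, the off-diagonal entry is $1$ in one direction and $1-1/r^2$ in the other, rather than something with a sign. I would try to verify this by computing $s_i e_T$ for $T$ and $s_iT$ in last letter order, and comparing leading coefficients. Both off-diagonal entries are non-negative because $|r|\ge 2$ in the off-diagonal case, and the diagonal entries $1\pm 1/r$ are non-negative as well. Granting this, $A_Y$ is feasible and is the unconstrained unique minimiser, which proves the theorem.
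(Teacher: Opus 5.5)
Your proposal is correct and is essentially the paper's proof. The paper gets the unconstrained critical point from $\nabla f(A)=2GA$ and a Cholesky factorization $G=L^tL$, identifies it with Young's seminormal basis as the unique orthogonal unitriangular basis (via Murphy), gets minimality from convexity of $f$, and reads feasibility off the seminormal formulas; your column-by-column Gram--Schmidt/Pythagoras argument is the same computation and gives uniqueness directly.

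The normalization you leave as ``granted'' is also taken for granted in the paper, and it closes quickly. Only the sign of $\gamma$ in $s_iy_T=r^{-1}y_T+\gamma\,y_{s_iT}$ (for $T<s_iT$) matters. Pair this identity with $e_T$ and with $e_{s_iT}$, using $s_ie_T=e_{s_iT}$, $y_{s_iT}\perp e_T$ and $\langle y_S,e_S\rangle=\langle y_S,y_S\rangle$. This gives $\gamma\langle y_{s_iT},y_{s_iT}\rangle=(1-r^{-2})\langle y_T,y_T\rangle>0$, and then $s_i^2=1$ forces the reverse coefficient to be $(1-r^{-2})/\gamma>0$.
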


We remark that this result is not deep: it is a consequence of the fact that diagonalizing a matrix can be expressed as a minimization problem. Since Young's seminormal basis is known to diagonalize the Gram matrix (being orthogonal with respect to the invariant form \cite{Murphy81}), it naturally appears as the solution to this trace minimization.

\subsubsection*{Maximization (Kazhdan--Lusztig Basis)}
We conjecture that the Kazhdan--Lusztig left cell basis corresponds to the maximum in low ranks. In higher ranks, the optimizer detects geometric anomalies. While a general statement and proof remains open, we provide both analytical and numerical evidence.

\begin{thm*}
For the reflection representation $\lambda=(n-1,1)$, the base-change matrix to the Kazhdan--Lusztig left cell representation is a critical point of the optimization problem.
\end{thm*}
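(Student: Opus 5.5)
The plan is to make everything explicit in the reflection representation and verify the first-order (Karush--Kuhn--Tucker) conditions at the Kazhdan--Lusztig point. Realise $S^{(n-1,1)}$ inside the permutation module $\mathbb{R}^n$, with polytabloid basis $v_k=e_k-e_1$ for $k=2,\dots,n$, ordered by the last letter order. The invariant form restricted from the standard one has Gram matrix $G_{jk}=1+\delta_{jk}$. Up to normalisation and signs, the Kazhdan--Lusztig left cell basis is the basis of simple roots $\alpha_i=e_i-e_{i+1}$. Since $\alpha_{k-1}=\pm(v_{k-1}-v_k)$, with $v_1:=0$, the transition matrix $A_{KL}$ is upper unitriangular after fixing conventions. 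In this basis $(1+s_i)\alpha_i=0$, $(1+s_i)\alpha_{i\pm1}=\alpha_{i\pm1}+\alpha_i$ and $(1+s_i)\alpha_j=\alpha_j$ otherwise. So each $M_s(A_{KL}):=A_{KL}^{-1}(1+s)A_{KL}$ is non-negative, and I will record exactly which entries vanish: these give the active constraints. Finally $A_{KL}^tGA_{KL}$ is the Cartan matrix $C$ of type $A_{n-1}$, so $f(A_{KL})=2(n-1)$.

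Next, I parametrise the feasible set near $A_{KL}$ as $A=A_{KL}(1+Y+O(Y^2))$ with $Y$ strictly upper triangular. The unitriangular matrices form a group, so this gives a chart of the affine space of upper unitriangular matrices. To first order,
\[
df=2\,\mathrm{Tr}(C\,Y), \qquad dM_s=[M_s,Y],
\]
where $M_s=M_s(A_{KL})$. "Critical point" will be taken in the KKT sense for the maximisation problem. We need multipliers $\mu_{s,ij}\ge 0$, indexed by the pairs $(s,(i,j))$ with $(M_s)_{ij}=0$, such that for every strictly upper triangular $Y$
\[
2\,\mathrm{Tr}(CY)+\sum_{s,(i,j)}\mu_{s,ij}\,[M_s,Y]_{ij}=0 .
\]
Equivalently, $-\nabla f$ lies in the cone spanned by the gradients of the active constraints, restricted to the tangent space. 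This gives one linear equation in the $\mu$'s for each matrix unit $Y=E_{ab}$ with $a<b$.

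The computation then reduces to evaluating $[M_{s_i},E_{ab}]_{jk}$ for the sparse matrices $M_{s_i}$ above. These are supported near the diagonal: row $i$ has entries at columns $i\pm1$ and the diagonal is $1,\dots,1,0,1,\dots,1$. Because of this sparsity, each equation involves only a few multipliers. For $b>a+1$ we have $\mathrm{Tr}(CE_{ab})=C_{ba}=0$, and I expect these equations to be satisfied with the relevant multipliers set to zero, or to cancel by symmetry. The equations for $b=a+1$ carry $C_{a+1,a}=-1$, and should be solved by multipliers on the zero entries of $M_{s_a}$ and $M_{s_{a+1}}$ adjacent to the pair $(a,a+1)$. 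I will first do $n=3,4$ by hand to guess the pattern, then write the general multipliers and check the equations row by row.

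The main obstacle is this last step: exhibiting a consistent solution of the possibly overdetermined linear system and showing that all multipliers can be chosen non-negative, with the sign compatible with maximisation rather than minimisation. If the system has freedom, I will choose the $\mu$'s supported on the near-diagonal zero entries. If a sign fails, I will revisit the normalisation conventions (signs of the $\alpha_i$, orientation of the last letter order), since those affect which entries are active. A secondary point is a constraint qualification, for instance linear independence of the active gradients on the tangent space, so that KKT criticality is the right notion. I would check this directly from the same explicit commutators.
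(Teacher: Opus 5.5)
Your framework is the paper's own. The paper also conjugates by the unitriangular matrix $B$ to the Kazhdan--Lusztig basis, evaluates at the identity, gets $\nabla\tilde f=2\tilde G$ with $\tilde G$ the Cartan matrix, writes the constraint gradients as commutators, and looks for multipliers $\mu\ge 0$ with $\nabla\tilde f+\sum\mu\nabla C=0$.

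\textbf{The gap.} Your proposal stops exactly where the proof starts. You never exhibit the multipliers, and you leave their existence and sign as ``the main obstacle''. That step is the whole content of the theorem, so as written the proposal only sets up the equations.

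\textbf{An error in your matrices.} Your $M_{s_i}$ are wrong on the columns $j\neq i$, where you wrote the action of $s_i$ rather than of $1+s_i$. The correct action is $(1+s_i)\alpha_j=2\alpha_j$ for $|i-j|>1$ and $(1+s_i)\alpha_{i\pm1}=2\alpha_{i\pm1}+\alpha_i$. So the diagonal of $M_{s_i}$ is $2,\dots,2,0,2,\dots,2$. The zero pattern, and hence the active set, is unchanged, but the commutator values and therefore the multipliers do change.

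\textbf{How to close it.} For each $a$, use the single active constraint $(M_{s_{a+1}})_{a,a+1}=0$; it is active because column $a+1$ of $M_{s_{a+1}}$ vanishes. Row $a$ of $M_{s_{a+1}}$ is $2e_a^t$, so for strictly upper triangular $Y$
\[
[M_{s_{a+1}},Y]_{a,a+1}=\sum_c (M_{s_{a+1}})_{ac}Y_{c,a+1}-\sum_c Y_{ac}(M_{s_{a+1}})_{c,a+1}=2Y_{a,a+1}.
\]
This gradient is therefore $2E_{a,a+1}$ and enters no other equation. Set $\mu_{s_{a+1},(a,a+1)}=1$ and all other multipliers to zero. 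The equation for $(a,a+1)$ then reads $2C_{a+1,a}+2=0$, and the equations for $b>a+1$ read $0=0$. All multipliers are non-negative, and the convention $\nabla f+\sum\mu\nabla g=0$ is the correct one for maximisation. This is the paper's identity $[\tilde A_{s_{i+1}}^t,E_{i,i+1}]=2E_{i,i+1}$.

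\textbf{Two smaller points.}
\begin{enumerate}
\item The system is underdetermined, not overdetermined: there are far more active constraints than the $\binom{n-1}{2}$ variables.
\item Drop the plan to verify a constraint qualification. LICQ fails here: for example, $(A^{-1}(1+s_i)A)_{i+1,i}$ vanishes identically on the unitriangular chart, so that active constraint has zero gradient. It is also unnecessary, because the statement only claims that the Kazhdan--Lusztig point satisfies the KKT conditions.
\end{enumerate}
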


We have further verified computationally that the Kazhdan--Lusztig basis corresponds to the unique global maximum for all partitions of $n \le 7$. In higher ranks, however, Specht modules admit other natural positive bases that generally differ from the Kazhdan--Lusztig basis and can yield strictly larger values in the optimization problem. For example, both the Springer basis and $p$-canonical bases typically differ from the Kazhdan--Lusztig basis and give rise to larger cones and larger objective values.

\subsubsection*{Geometric Interpretation (Invariant Cones)}
We interpret these results via the theory of invariant cones. The non-negative action of $1+s$ guarantees the existence of invariant cones. Irreducibility implies that there exist unique minimal and maximal invariant cones \cite{MejstrikProtasov2025}.

\begin{thm*}
For hook-shape, two-column and $(n-2,2)$ partitions, the Kazhdan--Lusztig basis generates the unique maximal invariant cone inside $S^{\lambda}$.
\end{thm*}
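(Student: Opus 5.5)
The plan is to use duality between maximal and minimal invariant cones and to reduce the statement to showing that the dual of the Kazhdan--Lusztig cone is the minimal invariant cone for the transposed operators. Let $\{C_w\}$ be the Kazhdan--Lusztig basis of $S^\lambda$ (specialised at $q=1$) and $\mathcal{C}=\mathrm{cone}\{C_w\}$.

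\textbf{Step 1: $\mathcal{C}$ is an invariant proper cone.} At $q=1$ the left cell formulas give
\[
(1+s)C_w = 2C_w \text{ if } s\in\mathcal{L}(w),\qquad (1+s)C_w = \sum_{z} \mu(z,w)\,C_z \text{ otherwise},
\]
with the sum over $z$ having $s\in\mathcal{L}(z)$ and $\mu\ge 0$. So $\mathcal{C}$ is invariant under every $1+s$. It is proper because the $C_w$ form a basis.

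\textbf{Step 2: reduce to a statement about the dual cone.} Let $K$ be any invariant proper cone; we must show $K\subseteq\mathcal{C}$. Equivalently $\mathcal{C}^*\subseteq K^*$, where $\mathcal{C}^*$ is the cone spanned by the dual basis $\{C_w^*\}$ (via the invariant form, or abstractly in the dual space). The cone $K^*$ is invariant under all $(1+s)^t$. So it suffices to show that every $C_w^*$ lies in every proper cone invariant under the transposed operators $P_s=(1+s)^t$. In other words, $\mathcal{C}^*$ is the unique minimal invariant cone for the dual action, as provided by \cite{MejstrikProtasov2025}.

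\textbf{Step 3: force an initial ray into $K^*$.} The key mechanism is that $P_s^2=2P_s$, so $\tfrac12 P_s$ is a projection onto the $(+1)$-eigenspace $V_s$ of $s$. Hence for any full-dimensional invariant cone $K'$, $P_s K'$ is a nonzero subcone of $V_s\cap K'$. More generally, for a parabolic subgroup $W_I$ the product of the $\tfrac12 P_s$ (iterated) maps $K'$ into the $W_I$-invariants. I will choose $I$ so that the $W_I$-fixed space of the dual module is one-dimensional, spanned by a distinguished dual basis vector $C_{w_0}^*$. For hooks and two-column shapes a maximal parabolic adapted to the shape should do this; for $(n-2,2)$ I expect to take $I$ to be $S_{n-2}\times S_2$-type. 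This forces $\pm C_{w_0}^*\in K'$. The sign is fixed by pairing with an interior vector of $\mathcal{C}$, or by noting that $-C_{w_0}^*\in K'$ together with $C_{w_0}^*$ would violate properness.

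\textbf{Step 4: generate the remaining rays.} Using the transposed $W$-graph formulas, the dual basis vectors should be reachable from $C_{w_0}^*$ by applying the $P_s$ and taking nonnegative combinations. For $(n-1,1)$-like and hook shapes, each application $P_sC_x^*$ produces $C_{x'}^*$ plus nonnegative multiples of already-obtained vectors. To isolate $C_{x'}^*$, I will use invariance plus properness: the extra terms have to be removed, and this needs an extra argument. I would try expressing $C_{x'}^*$ as a limit of normalised powers $(\prod P_s)^k v$ of a suitable word, which stay in $K'$ because the cone is closed.

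\textbf{Main obstacle.} I expect Step 4 to be the hard part, since the cone generated by the semigroup orbit contains $C_{x'}^*+(\text{positive junk})$ but not obviously $C_{x'}^*$ itself. This is exactly where the three families are special: their $W$-graphs have $\mu$-values in $\{0,1\}$ and a chain-like or simple structure, so the junk can be peeled off inductively along the last-letter order. I would carry out this case analysis separately for hooks (exterior powers of the reflection representation), two-column shapes (by transposition/sign twist of two-row shapes), and $(n-2,2)$.
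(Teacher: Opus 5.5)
Your architecture matches the paper's: you dualise to the operators $(1+s)^t$ and show that the dual Kazhdan--Lusztig basis $\{w_T\}$ spans the minimal invariant cone. You seed it using that the top eigenspace of $\prod_{s\in I}(1+s)^t$ is the $W_I$-fixed space $\langle w_T\mid I\subseteq D^c(T)\rangle$, and you propagate with $(1+s)^t$ and normalised power iteration. The gap is Step~4. All the shape-specific content lives there, and you leave it open. The mechanism you suggest, peeling junk off along the last letter order, is not available in a cone, because you cannot subtract. A power-iteration limit isolates $w_{T'}$ only if you control which top eigenvectors of $A^t_{T'}$ are reachable from your starting vector. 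The missing ideas differ by family.
\begin{itemize}
\item \emph{Hooks $(n-k,1^k)$.} No propagation is needed. Every standard tableau has exactly $k$ descents and is determined by $D(T)$. So every $T$ is maximal for inclusion of the sets $D^c(\cdot)$, and the fixed space of $W_{D^c(T)}$ is $\mathbb{R}w_T$. Iterating $A^t_T$ on one fixed interior vector (the Perron--Frobenius vector of $F$) produces every $w_T$ with a consistent sign.
\item \emph{Two-column shapes.} There is no junk at all. By Westbury's result on two-row $W$-graphs, transported to the conjugate shape by complementing descent sets, each column of $(1+s)^t$ in the dual basis has at most one non-zero entry (Lemma~\ref{lemma:two_column}). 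So $(1+s)^t$ maps basis rays to basis rays, and connectivity of $\Gamma^\lambda$ reaches every ray from one seed.
\item \emph{$(n-2,2)$.} Only here does junk genuinely occur, for example $(1+s_{i-1})^tw_{T_{2,i}}=w_{T_{2,i-1}}+w_{T_{2,i+1}}$ for $i>4$. You must check with the cup-diagram formulas that iterating $A^t_{T_{2,i+1}}$ only produces $w_{T_{2,j}}$ with $j\le i+1$. Since its top eigenspace is spanned by $w_{T_{2,i+1}}$ and $w_{T_{i+1,i+2}}$, the limit is $w_{T_{2,i+1}}$. The same check is needed along the $T_{i,j}$ with $j$ fixed (Lemma~\ref{lemma:two_row_computations}).
\end{itemize}

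There are two smaller errors. First, in Step~3 a maximal parabolic does not work for hooks or two-column shapes. The dimension of the $W_I$-fixed space is a Kostka number $K_{\lambda\mu}$, and $K_{\lambda,(m,n-m)}=0$ whenever $\lambda$ has at least three rows, so you would obtain nothing. Take instead $I=D^c(T)$ with $T$ maximal under inclusion of $D^c(\cdot)$, for instance the Young subgroup $S_\lambda$, where $K_{\lambda\lambda}=1$. Your $S_{n-2}\times S_2$ choice for $(n-2,2)$ is exactly this. Second, neither of your arguments fixes the sign. $-K'$ is invariant whenever $K'$ is, so the correct form of Step~2 is $K\subseteq\mathcal{C}$ or $K\subseteq-\mathcal{C}$. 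Normalise $K'$ once (for example, so that it contains the Perron--Frobenius vector of $F$) and derive every ray from that single normalisation.
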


A striking feature of our results is that for all partitions of $n \le 8$, the maximal invariant cone is simplicial and is generated by the Kazhdan--Lusztig basis. We show that this beautiful geometric property persists exactly until the partition $\lambda=(4,4,1)$. In this case, we verify that the maximal invariant cone is no longer simplicial. This reveals that while the quadratic optimization approach works robustly for small rank, the underlying geometric structure of the invariant cones becomes significantly more complex beyond this point. 

\subsubsection{Generalization to Canonical Bases}

In Section \ref{sec:canonical}, we apply this optimization perspective to finite-dimensional representations of the Lie algebra $\mathfrak{sl}_n$. We define an analogous problem over bases that are unitriangular with respect to the Gelfand--Tsetlin basis. We show that the Gelfand--Tsetlin basis is the unique minimizer of the trace of the Gram matrix. Furthermore, we conjecture that the canonical basis corresponds to the unique maximum in small ranks $n\leq 4$. We have verified this conjecture for all $\mathfrak{sl}_4$ modules $V(\lambda)$ with $\lambda \vdash m$ and $m \leq 10$. In general, it seems fruitful to compare different bases giving rise to invariant cones.

\subsection{Future directions}

To conclude the introduction, let us discuss several other motivations and avenues for future work.

\subsubsection{$p$-canonical bases.}
A major motivation for the current work is the theory of $p$-canonical bases in modular representation theory \cite{JensenWilliamson2017}. These are a central repository for important invariants of modular representations (dimensions of simples, multiplicities, character values, etc.). Current algorithms exist but are not ``combinatorial'' in the same way that algorithms for computing traditional canonical bases are. Whilst the Kazhdan--Lusztig basis can be computed via a simple algorithm in the Hecke algebra, algorithms to compute the $p$-canonical basis make use of difficult computations in the diagrammatic Hecke category, and are orders of magnitude slower \cite{GibsonJensenWilliamson2023}.

A basic property of the $p$-canonical basis is that it is positive in the Kazhdan--Lusztig basis. Experiments of the second author showed that in many settings where the $p$-canonical basis is known, it can be computed as the solution of a linear program, with constraints arising from elementary positivity properties of the $p$-canonical basis. The linear programming approach is orders of magnitude faster than the original algorithm, but we have no guarantee that it yields the right answer. Thus, it is an important question, closely tied to the results of this paper, when and why the approach via optimization works.

\subsubsection{Optimization.}
The search for canonical bases typically leads to very large optimization problems. For example, the task of finding structure constants leads to an optimization problem where there is one coefficient for each pair of permutations of $n$, leading to a naive parameter count of $(n!)^2$. Whilst linear solvers can handle such high-dimensional problems with millions of variables, they struggle with the quadratic optimization problems that arise here. It would be very interesting to attempt gradient-based methods \cite{BazaraaSheraliShetty2006} or ADMM \cite{ADMM}. Another promising approach is to use semidefinite programming (SDP)  to obtain relaxations of the quadratic optimization problem, see \cite{Schweighofer2005}. We hope to return to this in future work.

\subsubsection{Reinforcement learning.}
Reinforcement learning is the field of AI used to solve problems like Go and Chess \cite{RL}. In this setting, one has a set of possible moves (in a known or unknown environment) and the goal is to maximize the possible reward. Many algebraic algorithms to compute canonical bases resemble a game. For example, in the setting of the Hecke algebra the key algebraic step for computing Kazhdan--Lusztig bases involves the formula
\[
b_sb_x = b_{sx} + \sum_{y < x; sy < y} \mu(y,x)b_y.
\]
Here, it is tempting to regard the left-hand side as given, and the right-hand side as a choice of decomposition. One would like to decompose $b_sb_x$ into basis elements, which leads to positive future rewards, perhaps expressed via small but positive $\mu(y,x)$. In future work, we hope to run reinforcement learning algorithms on Hecke algebras and quantum groups, in the hope of recovering canonical bases. From this point of view, the current work serves as a baseline to see how far one can get with elementary approaches.

\section{Preliminaries}

\subsection{Coxeter systems}

A Coxeter system $(W,S)$ consists of a set of simple reflections $S$ and a group $W$, together with positive integers $m_{s,t}\in\mathbb{Z}_{\geq2}\cup\{\infty\}$ for $s\neq t$ and $m_{ss}=1$ such that $W$ has the following presentation
\[
W=\langle s\in S\mid(st)^{m_{st}}=1\forall s,t,\in S\rangle.
\]
We denote by $T$ the set of reflections, i.e. $T=\{w^{-1}sw\mid s\in S,w\in W\}$. Let $\ell:W\to\mathbb{Z}_{\geq0}$ be the length function of $W$, i.e. for $w\in W$ we have $$\ell(w)=\min\{n\mid\exists s_{1},\dots,s_{n}\in S:s_{1}\cdots s_{n}=w\}.$$ The Bruhat order, denoted by $x\leq y$  for $x,y\in W $, is the transitive closure of the relations $x\to y$ if $\ell(x)\leq\ell(y)$ and there exists $t\in T$ such that $xt=y$. The Bruhat graph is the graph with vertices $W$ and edges given by the relation $\to$. For finite Coxeter groups $W$ there exists a unique element of maximal length (the longest element) $w_{0}\in W$, which is also maximal in the Bruhat order. For a detailed treatment of Coxeter systems, we refer to \cite{Humphreys90,BjornerBrenti2005}.
\begin{example}
Symmetric groups are Coxeter groups. A Coxeter system for $S_{n}$ is given by $(W,S)=(S_{n},\{(i,i+1)\mid i=1,\dots,n-1\})$ and $T=\{(i,j)\mid1\leq i<j\leq n\}$. The Bruhat graph of $S_3$ is given in Figure \ref{fig:bruhat_graph_a2}.
\begin{figure}[H]
    \centering
    \begin{tikzpicture}[
    vertex/.style={inner sep=2pt, font=\normalsize, draw},
    edge/.style={-Stealth, thick}
]

\node[vertex] (id) at (0,0) {$id$};
\node[vertex] (s) at (-2,1.732) {$s$}; 
\node[vertex] (t) at (2,1.732) {$t$};
\node[vertex] (st) at (2,3.464) {$st$}; 
\node[vertex] (ts) at (-2,3.464) {$ts$};
\node[vertex] (sts) at (0,5.196) {$sts=tst$}; 

\draw[edge] (id) -- (s);
\draw[edge] (id) -- (t);
\draw[edge] (id) -- (sts);
\draw[edge] (s) -- (st);
\draw[edge] (t) -- (st);
\draw[edge] (t) -- (ts);
\draw[edge] (s) -- (ts);
\draw[edge] (st) -- (sts);
\draw[edge] (ts) -- (sts);

\end{tikzpicture}
    \caption{Bruhat graph in type $A_2$.}
    \label{fig:bruhat_graph_a2}
\end{figure}
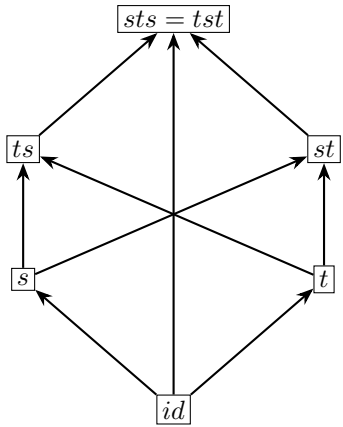
\end{example}

\subsection{Hecke algebra and Kazhdan--Lusztig basis}

The Hecke algebra associated to the Coxeter system $(W,S)$ is a unital associative algebra over $\mathbb{Z}[v,v^{-1}]$, denoted by $\mathcal{H}=\mathcal{H}(W)$. It has generators $\delta_{s},s\in S$ subject to the following relations:
\begin{enumerate}
\item $\delta_{s}^{2}=(v^{-1}-v)\delta_{s}+1$ (quadratic relations);
\item $\underbrace{\delta_{s}\delta_{t}\cdots}_{m_{st}{\rm -times}}=\underbrace{\delta_{t}\delta_{s}\cdots}_{m_{st}{\rm -times}}$ (braid relations).
\end{enumerate}
Using reduced expressions in $W$, we can give a basis for $\mathcal{H}$, as described in the following proposition.
\begin{prop}
The elements $\delta_{x}=\delta_{s_{1}}\cdots\delta_{s_{n}}$ for $x\in W$ and any reduced expression $x=s_{1}\cdots s_{n}$ with $n=\ell(x)$
give a $\mathbb{Z}[v,v^{-1}]$-basis of $\mathcal{H}$, called the standard basis.
\end{prop}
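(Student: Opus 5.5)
The plan has two parts: show that the elements $\delta_x$ span $\mathcal{H}$, and show that they are linearly independent. Before either, I check that $\delta_x$ is well defined. By Matsumoto's theorem (the word property of Coxeter groups), any two reduced expressions for $x$ are related by a sequence of braid moves alone. Each braid move is one of the defining braid relations (2), so the product $\delta_{s_1}\cdots\delta_{s_n}$ does not depend on the chosen reduced expression.

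For spanning, let $M$ be the $\mathbb{Z}[v,v^{-1}]$-span of $\{\delta_x\}_{x\in W}$. Since $1=\delta_{id}\in M$, it suffices to show that $M$ is stable under left multiplication by every generator $\delta_s$. I split into two cases.
\begin{itemize}
\item If $sx>x$, then $s$ followed by a reduced word for $x$ is a reduced word for $sx$, so $\delta_s\delta_x=\delta_{sx}$.
\item If $sx<x$, write $x=s\cdot(sx)$ with a reduced word beginning with $s$. The quadratic relation (1) then gives
\[
\delta_s\delta_x=\delta_s^2\delta_{sx}=(v^{-1}-v)\delta_x+\delta_{sx}.
\]
\end{itemize}
In both cases $\delta_s\delta_x\in M$. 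Hence $M$ is a left ideal containing $1$, so $M=\mathcal{H}$.

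Linear independence is the main obstacle. I would follow the standard argument of Humphreys, \S7, adapted to this normalization. Let $E$ be the free $\mathbb{Z}[v,v^{-1}]$-module with basis $\{e_w\}_{w\in W}$. For each $s\in S$, define operators $L_s$ and $R_s$ on $E$ by the formulas above:
\[
L_s e_w=\begin{cases} e_{sw} & \text{if } sw>w,\\ (v^{-1}-v)e_w+e_{sw} & \text{if } sw<w,\end{cases}
\]
and analogously $R_s$ with $ws$ in place of $sw$.

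The key lemma is that $L_s$ and $R_t$ commute for all $s,t\in S$. I would prove it by a case analysis on the relative lengths of $w$, $sw$, $wt$ and $swt$. The delicate case is $sw=wt$, which is handled using the exchange condition. Granting the lemma, I check that each $L_s$ satisfies the quadratic relation directly from its definition. For the braid relations, I use that the $L$'s commute with the $R$'s and that $e_w=R_{s_n}\cdots R_{s_1}e_{id}$ for any reduced word $w=s_1\cdots s_n$. Consequently both sides of a braid relation, applied to an arbitrary $e_w$, reduce to their action on $e_{id}$. On $e_{id}$, both sides of the braid relation in the $L_s$ yield $e_{w_{st}}$, where $w_{st}$ is the longest element of the rank-two parabolic subgroup generated by $s$ and $t$.

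It follows that $\delta_s\mapsto L_s$ defines an algebra homomorphism $\mathcal{H}\to\operatorname{End}(E)$, and under it $\delta_x\cdot e_{id}=e_x$. A linear dependence among the $\delta_x$ would therefore give one among the $e_x$, which is impossible since the $e_x$ form a basis of $E$. So the $\delta_x$ are linearly independent, and together with spanning they form a basis.
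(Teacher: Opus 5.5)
Your proposal is correct. The paper does not prove this proposition: it is quoted as standard background (cf.\ Humphreys, Soergel), so there is no argument in the paper to compare against. Your argument is the standard one:
\begin{itemize}
\item Matsumoto's theorem shows that $\delta_x$ does not depend on the chosen reduced expression.
\item The left-ideal argument, based on the two multiplication rules for $\delta_s\delta_x$, gives spanning.
\item Humphreys' commuting left and right operators on the free module $E$ give a representation $\delta_s\mapsto L_s$ with $\delta_x\cdot e_{id}=e_x$, which yields linear independence.
\end{itemize}

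The normalization matters in one place, namely the case $sw=wt$ of the commutation lemma. In the ascent case, $L_sR_te_w$ and $R_tL_se_w$ equal $(v^{-1}-v)e_{sw}+e_w$ and $(v^{-1}-v)e_{wt}+e_w$. In the descent case, both acquire the same extra term $(v^{-1}-v)^2e_w$. The two sides agree because the quadratic parameter $v^{-1}-v$ is the same for $s$ and $t$. In Humphreys' generic setting this is the requirement that the parameters agree on conjugate simple reflections, and here it holds automatically.
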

\begin{rem}
    In the literature, the $\delta_{s}$ are sometimes denoted by $H_{s}$  \cite{SoergelCombinatoricsTilting97} or $\tilde{T}_s$ \cite{Lusztig83} instead.  It is also common to consider generators $\delta_{s}=vT_{s},$ and $q=v^{-2}$ at the cost of working over $\mathbb{Z}[q^{1/2},q^{-1/2}]$, e.g. \cite{KazhdanLusztig79}.
\end{rem}

In  \cite{KazhdanLusztig79}, Kazhdan and Lusztig define an involution  on $\mathcal{H}$, called \emph{bar involution}, by setting $\overline{v}=v^{-1},\overline{\delta}_{x}=\delta_{x^{-1}}^{-1}$. For example, we have $\overline{\delta_{s}}=\delta_{s}+(v-v^{-1})$ and $\overline{\delta_{s}+v}=\delta_{s}+v$.
\begin{thm}[\cite{KazhdanLusztig79,SoergelCombinatoricsTilting97}]
There exists a unique bar invariant basis $\{b_{x}\mid x\in W\}$ of $\mathcal{H}$ such that 
\[
b_{x}=\delta_{x}+\sum_{y<x}h_{y,x}\delta_{y},
\]
where $h_{y,x}\in v\mathbb{Z}[v]$. 
\end{thm}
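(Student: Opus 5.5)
We prove existence by constructing $b_x$ inductively on $\ell(x)$, and uniqueness by showing that a bar-invariant element with coefficients in $v\mathbb{Z}[v]$ must vanish. Throughout we use that the bar involution is a ring involution, which holds because $\overline{\delta_s}=\delta_s^{-1}$ satisfies the defining relations with $v\mapsto v^{-1}$. We also use that $\overline{\delta_x}\in\delta_x+\sum_{y<x}\mathbb{Z}[v,v^{-1}]\delta_y$. This triangularity follows by writing $\overline{\delta_x}$ as a product of the factors $\delta_s+(v-v^{-1})$ along a reduced expression and applying the subword property of Bruhat order.

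\textbf{Uniqueness.} Suppose $h=\sum_y c_y\delta_y$ is bar invariant with all $c_y\in v\mathbb{Z}[v]$, and suppose $h\neq 0$. Take $y$ maximal in Bruhat order with $c_y\neq0$. Comparing the coefficients of $\delta_y$ in $h=\overline{h}$, triangularity gives $c_y=\overline{c_y}$. This is impossible for a nonzero element of $v\mathbb{Z}[v]$, so $h=0$. Now if $b_x$ and $b'_x$ are two candidates, their difference is bar invariant and lies in $\sum_{y<x}v\mathbb{Z}[v]\delta_y$, so it vanishes. The basis property is automatic from unitriangularity with respect to the standard basis.

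\textbf{Existence.} We induct on $\ell(x)$, starting from $b_e=1$ and $b_s=\delta_s+v$, which is bar invariant by the computation above. For $\ell(x)>0$, pick $s$ with $sx<x$. The element $b_sb_{sx}$ is bar invariant. Since $\delta_s\delta_y$ equals either $\delta_{sy}$ or $\delta_{sy}+(v^{-1}-v)\delta_y$, it has the form $\delta_x+\sum_{y<x}p_y\delta_y$ with $p_y\in\mathbb{Z}[v,v^{-1}]$. The nontrivial part is checking that $p_y$ has no negative powers of $v$: the term $v^{-1}\delta_y$ arising from $\delta_s\delta_y$ with $sy<y$ cancels against $v\cdot v^{-1}$ contributions. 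A careful bookkeeping shows $p_y\in\mathbb{Z}[v]$, so each $p_y$ has a constant term $\mu(y)$ and only nonnegative powers of $v$.

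We then correct by setting
\[
b_x=b_sb_{sx}-\sum_{y<x}\mu(y)\,b_y,
\]
using the inductively constructed $b_y$ and proceeding downward from the top to remove the constant terms. The result is bar invariant and has the required form.

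\textbf{Main obstacle.} The delicate step is the degree claim $p_y\in\mathbb{Z}[v]$. We would handle it by the standard cleaner route: prove, by downward induction on $y$, the general lemma that any bar-invariant element $\delta_x+\sum_{y<x}p_y\delta_y$ with $p_y\in\mathbb{Z}[v,v^{-1}]$ can be corrected to one with coefficients in $v\mathbb{Z}[v]$. The correction at each step is by a bar-invariant combination $\sum_{y}(a_y)b_y$ with $\overline{a_y}=a_y$, chosen so that it cancels the part of $p_y$ in non-positive degree. This requires $p_y-\overline{p_y}$ to be controlled, which follows from bar invariance and triangularity.
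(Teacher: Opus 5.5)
Your proof is correct and takes the route the paper points to (Soergel's recursion, here in its left-handed form $b_x=b_sb_{sx}-\sum_{y<x}\mu(y)b_y$), with uniqueness by the standard top-coefficient argument; the support claim $sy<x$ for $y<sx$ is the lifting property of Bruhat order, which you use without naming it. The step you call delicate is in fact immediate: $(\delta_s+v)\delta_y$ equals $\delta_{sy}+v\delta_y$ or $\delta_{sy}+v^{-1}\delta_y$, and in the second case the $v^{-1}$ is absorbed by $h_{y,sx}\in v\mathbb{Z}[v]$ rather than cancelled (for $y=sx$ only the first case occurs). Likewise, your fallback lemma needs no control of $p_y-\overline{p_y}$, because every Laurent polynomial is a bar-invariant one plus an element of $v\mathbb{Z}[v]$.
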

This basis is called the Kazhdan--Lusztig basis of $\mathcal{H}$ and $h_{y,x}$ is called the Kazhdan--Lusztig polynomial associated to $x,y\in W$. We also set $h_{x,x}=1$ and $h_{y,x}=0$ if $y\not\leq x$.
\begin{rem}
    In the original notation of \cite{KazhdanLusztig79}, we have $b_{x}=C_{x}',$ and $h_{y,x}=v^{\ell(x)-\ell(y)}P_{y,x}$.
\end{rem}

There are at least two prominent ways of obtaining the Kazhdan--Lusztig basis. The original method due to Kazhdan and Lusztig involves the so-called $R$-polynomials. Another way (see \cite{SoergelCombinatoricsTilting97}), which does not involve the $R$-polynomials, involves the recursion
\[
b_{xs}=b_{x}b_{s}-\sum_{y<x}h'_{y}(0)b_{y},
\]
where the polynomials $h'_{y}$ satisfy
\[
b_{x}b_{s}=b_{xs}+\sum_{y<x}h'_{y}\delta_{y}.
\]
\begin{example}
    For $s\in S$, we have $b_{s}=\delta_{s}+v$. When $W$ is finite, we have  $b_{w_{0}}={\displaystyle \sum_{y\in W}v^{\ell(w_{0})-\ell(y)}\delta_{y}}$, see for instance  \cite{SoergelCombinatoricsTilting97}. More generally, for the longest element of any parabolic subgroup $W_f$ with longest element $w_f$, we have $b_{w_f}={\displaystyle \sum_{y\in W_f}v^{\ell(w_{f})-\ell(y)}\delta_{y}}$.
\end{example}
We denote the structure constants of the Kazhdan--Lusztig basis by  $\mu_{xy}^z$, i.e. $b_xb_y=\displaystyle\sum_{z\in W}\mu_{xy}^zb_z$.
The Kazhdan--Lusztig positivity conjecture states that the coefficients of the Kazhdan--Lusztig polynomials are non-negative, i.e.\  $h_{y,x}\in v\mathbb{Z}_{\geq0}[v]$ and moreover that the structure constants are of the form $\mu_{xy}^z\in\mathbb{Z}_{\geq0}[v]$. This was proven by Kazhdan and Lusztig in \cite{KazhdanLusztig80} for finite and affine Weyl groups and in full generality for arbitrary Coxeter groups by Elias and the second author in \cite{EliasWilliamson14}.

We can interpret $h_{y,x}(1)$ as multiplicities of simple modules within Verma modules, via the Kazhdan--Lusztig conjecture \cite{KazhdanLusztig79}, which became a theorem due to Beilinson and Bernstein \cite{BeilinsonBernstein1981} and independently by Brylinski and Kashiwara \cite{BrylinskiKashiwara} using the theory of $\mathcal{D}$-modules.

Both recursions for computing Kazhdan--Lusztig polynomials involve either evaluating polynomials or using the bar involution. Note that these operations become ``invisible'' when setting $v=1$. Thus, a priori, the Kazhdan--Lusztig basis is ``invisible'' in the group ring $\mathbb{Z}[W]$. This leads naturally to the following question, which has been around ever since the discovery of the Kazhdan--Lusztig basis:
\begin{problem}\label{prob:intrinsic_KL}
Can we compute $h_{y,x}(1)$ inside the group ring of $W$ intrinsically (without going through the Hecke algebra $\mathcal{H}$)?
\end{problem}

A positive answer to Question \ref{prob:intrinsic_KL} would be very interesting. It would necessarily entail a completely different way of defining or computing the Kazhdan--Lusztig basis.

\subsection{W-graphs and Kazhdan--Lusztig left cell representations}

An important application of the Kazhdan--Lusztig basis is to obtain representations of the Hecke algebra and the underlying Coxeter group. This was achieved in the original paper of Kazhdan and Lusztig \cite{KazhdanLusztig79}, where it is explained how the action may be encoded in terms of certain sparse graphs called $W$-graphs. We use the conventional $W$-graph notation of \cite{KazhdanLusztig79}, in which the Hecke algebra generators are written with respect to an alternative Kazhdan--Lusztig basis $\{\tilde b_x\}$ defined below. The action on the previously defined basis $\{b_x\}$ is given by the transpose matrices, i.e.\ the dual representation. In finite type, this follows from \cite[Corollary~3.2]{KazhdanLusztig79}.

\begin{definition}[\cite{KazhdanLusztig79,Lusztig83}]
    A $W$-graph $\Gamma=\Gamma(X,Y,I,\mu)$ is a graph with vertices $X$, edges $Y$ and additional data
    \begin{itemize}
        \item labelling of the vertices $I(x)\subseteq S$, $x\in X$,
        \item labelling of the edges by non-zero integers $\mu(x,y),x,y\in X$,
    \end{itemize} such that the free $\mathbb{Z}[v,v^{-1}]$-module with basis indexed by $X$ yields a representation of $\mathcal{H}(W)$ via the following action
$$\delta_{s}{x}=\begin{cases}
-v{x}, &s\in I(x),\\
v^{-1}x+\sum_{\{y\in Y \mid s\in I(y)\}}\mu(y,x)y, & s\notin I(x).
\end{cases}$$
\end{definition}

\begin{example}
    The Kazhdan--Lusztig basis yields a $W$-graph of the regular representation by choosing $X=W$, and edges of the form $\{x,y\},$ whenever $b_y$ appears in $b_sb_y$ for some $s\in S$, see \cite{KazhdanLusztig79}. The vertex labels are given by the left descent set $\mathcal{L}(x)=\{s\in S\mid sx <x\}$. The edge labels  $\mu(y,x)$ are given as the coefficient of $v$ in $h_{y,x}$. This $W$-graph structure comes from the $\mathcal{H}$-action on another variant of the Kazhdan--Lusztig basis, given via $$\tilde{b}_x=\sum_{y\leq x}(-1)^{\ell(x)+\ell(y)}\overline{h_{x,y}}\delta_y,$$see \cite{KazhdanLusztig79,SoergelCombinatoricsTilting97}. Note that in the language of \cite{KazhdanLusztig79}, we have $C_x'=b_x,C_x=\tilde{b}_x$, with $v^{-2}=q$ and Hecke algebra generators $v^{-1}\delta_s=T_s$.
\end{example}

We can define the following preorder as the transitive closure of the following relations (see \cite{Lusztig83}):
$$y\leq_Lx \text{ if }b_y\text{ appears in }b_sb_x\text{with non-zero coefficient for some }s\in S.$$
We write $x\sim_Ly$ for the corresponding equivalence relation, i.e.\ $x\sim_Ly$ if $x\leq_Ly$ and $y\leq_Lx$. The equivalence classes under this relation are called left cells. In \cite{KazhdanLusztig79} it is proved that each left cell defines a representation of the Hecke algebra, called the Kazhdan--Lusztig left cell representation.  Note that $$\mathcal{H}_{\leq_L x}=\langle \tilde{b}_y \mid y \leq_Lx\rangle, \mathcal{H}_{<_L x}=\langle \tilde{b}_y \mid y <_Lx\rangle $$are left ideals in the Hecke algebra $\mathcal{H}$. The left cell representation is defined as the quotient $\mathcal{H}_{\leq_L x}/\mathcal{H}_{<_L x}$ and can be encoded using a $W$-graph, see \cite{KazhdanLusztig79}. When we set $v=1$, we obtain a representation for the underlying Coxeter group. In type $A$, the combinatorics of Kazhdan--Lusztig left cells are closely tied to the combinatorics of Young tableaux.

\subsection{Young Tableaux}

Let $n$ be a positive natural number and $\lambda$ be a partition of $n$, i.e., $\lambda=(\lambda_1,\dots,\lambda_k)$ with $\lambda_1\geq \dots \geq \lambda_k>0$ and $\sum_{i=1}^k\lambda_i=n$, which we denote by $\lambda \vdash n$.

We associate to $\lambda$ a \emph{Young diagram}, which consists of a collection of boxes arranged in left-justified rows, with $\lambda_i$ boxes in the $i$-th row. The \emph{conjugate partition} of $\lambda$, denoted by $\lambda'$ is obtained by swapping the rows and columns of its Young diagram. A \emph{Young tableau} of shape $\lambda$ is a filling of the boxes of the Young diagram with the integers $\{1, \dots, n\}$ such that each number appears exactly once. We denote the set of all Young tableaux of shape $\lambda$ by $\mathrm{YT}(\lambda)$. The symmetric group $S_n$ naturally acts on the set of Young tableaux on the left by applying an element $\sigma\in S_n$ to each entry simultaneously.

A \emph{standard Young tableau} is a Young tableau in which the entries are strictly increasing along each row and down each column. We denote the set of all standard Young tableaux of shape $\lambda$ by $\mathrm{Std}(\lambda)$.

For a standard Young tableau $T \in \mathrm{Std}(\lambda)$, a number $i \in \{1, \dots, n-1\}$ is called a \emph{descent} of $T$ if $i+1$ lies strictly south of $i$ (i.e., in a row with a larger index). In this context, strictly south implies weakly left. We denote the set of descents by:
\[
    D(T) = \{ i \mid i+1 \text{ lies strictly south of } i \}.
\]
We define a total order on $\mathrm{Std}(\lambda)$ called the \emph{last letter order}, denoted by $<$. Let $T, T' \in \operatorname{Std}(\lambda)$. We say $T < T'$ if the entry $n$ lies in a row with a strictly larger index in $T'$ than in $T$. If $n$ is in the same row in both tableaux, we ignore $n$ and compare the positions of $n-1$ recursively. This ordering is essential for the representation theory of the symmetric groups.

\subsection{Kazhdan--Lusztig left cell representations in type \texorpdfstring{$A$}{A}}

In type $A$, it was shown in \cite{KazhdanLusztig79} that the isomorphism types of left cells correspond exactly to the irreducible representations of $W\cong S_n$, which are labelled by partitions of $n$. Thus, each irreducible module has a representation coming from a left cell and for a given partition $\lambda\vdash n$, we denote the $W$-graph corresponding to such a left cell by $\Gamma^\lambda$. For example, in Figure \ref{fig:3_2} we show the $W$-graph $\Gamma^{(3,2)}$.
\begin{figure}
    \centering
    \resizebox{.3\linewidth}{!}{\ytableausetup{centertableaux, notabloids, smalltableaux}

\begin{tikzpicture}[
    vertex/.style={rectangle, draw=none, inner sep=2pt},
    edge/.style={thick}
]


\node[vertex] (2) at (2, 2) {
    \begin{ytableau}
    1 & 3 & 4 \\
    2 & 5
    \end{ytableau}
};

\node[vertex] (0) at  (0, 4) {
    \begin{ytableau}
    1 & 3 & 5 \\
    2 & 4
    \end{ytableau}
};

\node[vertex] (3) at (0, 0) {
    \begin{ytableau}
    1 & 2 & 4 \\
    3 & 5
    \end{ytableau}
};

\node[vertex] (1) at (-2, 2) {
    \begin{ytableau}
    1 & 2 & 5 \\
    3 & 4
    \end{ytableau}
};

\node[vertex] (4) at (0, -2) {
    \begin{ytableau}
    1 & 2 & 3 \\
    4 & 5
    \end{ytableau}
};


\draw[edge] (1) -- (0); 
\draw[edge] (0) -- (2); 
\draw[edge] (2) -- (3); 
\draw[edge] (1) -- (3); 

\draw[edge] (3) -- (4); 

\draw[edge] (0) to[bend left=90, looseness=1.5] (4); 

\end{tikzpicture}}
    \caption{$W$-graph $\Gamma^{(3,2)}$: All edge weights are equal to $1$. For the correspondence to the vertex labelling via descents of standard tableaux, see Table \ref{tab:correspondence}.}
    \label{fig:3_2}
\end{figure}
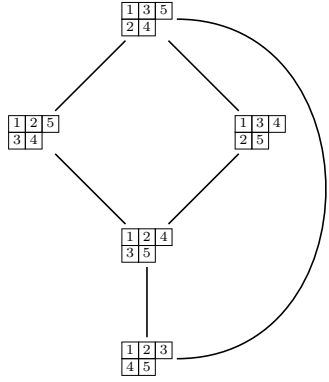
If another representation yields the same matrix representation as a Kazhdan--Lusztig left cell representation, we still refer to the module equipped with its underlying basis as a Kazhdan--Lusztig left cell representation.

\begin{rem}\label{rem:descents}
    We can use the Robinson-Schensted correspondence to decide whether two elements $x,y\in W$ lie in the same left cell. The Robinson-Schensted correspondence associates to $w\in S_n$ a unique pair of standard Young tableaux of the same shape with $n$ boxes, i.e. we have a bijective map    $$\mathrm{RS}:S_n\to\bigcup_\lambda \mathrm{Std}(\lambda),w\mapsto(P(w),Q(w)).$$ It is known that $x\sim_Ly$ if and only if $Q(x)=Q(y)$, see \cite{KazhdanLusztig79} and \cite{BjornerBrenti2005,GarsiaMcLarnan88} for a more detailed proof. Hence, the canonical basis elements of each left cell are indexed by standard Young tableaux $T\in \mathrm{Std}(\lambda)$ of the same shape. We can further deduce the vertex labelling of the associated $W$-graph using the descent set of the corresponding standard Young tableau.
\end{rem}

\begin{example}\label{ex:simple_roots}
    The reflection representation for the symmetric group $W=S_{n+1}$ corresponds to the partition $\lambda=(n,1)$. Each standard Young tableau of this shape has exactly one descent. We can obtain the Kazhdan--Lusztig left cell representation for $\lambda$  by considering the action in the simple root basis $\alpha_1,\dots,\alpha_n\in \mathbb{R}^n$ via $$s_i(\alpha_j)=\alpha_j-\langle\alpha_i,\alpha_j\rangle\alpha_i,$$where $\langle\alpha_i,\alpha_j\rangle$ is the $(i,j)$-entry of the Cartan matrix of type $A_n$. 
\end{example}

Other explicit descriptions for left cell representations and their corresponding invariant forms are given for hook shapes \cite{BjornerBrenti2005,Fung2003} or two-row shapes \cite{LascouxSchuetzenberger,Westbury95}.

In Sections \ref{sec:compute_cones} and \ref{sec:compute_left_cells}, we propose several ways of characterising Kazhdan--Lusztig left cell representations in type $A$ by setting $v=1$ and exploiting the fact that $1+s$ acts non-negatively on the Kazhdan--Lusztig left cell representation.

\subsection{Specht modules and the polytabloid basis}

Specht modules provide a concrete construction of all irreducible representations of $S_n$. Let $\lambda$ be a partition of $n$. For a Young tableau $T$ of shape $\lambda$, we denote its row and column stabilizers by $R(T)$ and $C(T)$, respectively. The \emph{Young symmetrizer} is defined as
$$b_T=\sum_{\sigma\in C(T)}\mathrm{sgn}(\sigma)\sigma.$$
Two tableaux $T$ and $T'$ are equivalent if $T' = \sigma T$ for some $\sigma \in R(T)$. An equivalence class is called a \emph{tabloid} and is denoted by $\{T\}$. The symmetric group $S_n$ acts on the set of tabloids via $\sigma \{T\}=\{\sigma T\}$.

Let $M^\lambda$ be the complex vector space spanned by the set of all tabloids of shape $\lambda$. We equip $M^\lambda$ with an $S_n$-invariant inner product by declaring the tabloids to be orthonormal, i.e., $(\{T\},\{T'\})=\delta_{\{T\},\{T'\}}$.
We define the \emph{polytabloid} associated to $T$ as $v_T=b_T\{T\}$. It satisfies $\sigma v_T=v_{\sigma T}$ for all $\sigma\in S_n$. The \emph{Specht module} $S^\lambda$ is the submodule of $M^\lambda$ spanned by the polytabloids:
$$S^\lambda=\langle v_T \mid T\in \mathrm{YT}(\lambda)\rangle_\mathbb{C}.$$
We list the fundamental properties of Specht modules below; for details see \cite[Chapter~7]{JamesKerber1981}.

\begin{thm}\label{thm:polytabloid}
    The modules $S^\lambda$ satisfy the following properties:
    \begin{enumerate}
        \item the set $\{S^\lambda \mid \lambda \vdash n\}$ provides a complete set of isomorphism classes of irreducible $S_n$-modules;
        \item the set of polytabloids labelled by standard Young tableaux, $\{v_T \mid T\in \mathrm{Std}(\lambda)\}$, forms a basis for $S^\lambda$;
        \item the matrices representing the action of $S_n$ in this basis have integer coefficients;
        \item the restriction of the inner product on $M^\lambda$ to $S^\lambda$ is positive definite and $S_n$-invariant.
    \end{enumerate}
\end{thm}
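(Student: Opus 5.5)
We will prove the four properties in the order (4), (2), (3), (1). The basic tool is the standard combinatorics of tabloids: the dominance order on tabloids, and James' submodule theorem. Throughout, $M^\lambda$ is viewed as the permutation module on tabloids, and each $v_T = b_T\{T\}$ is a signed sum of tabloids.

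\textbf{Invariance and positive definiteness (property (4)).} $S_n$ permutes the orthonormal basis of tabloids of $M^\lambda$. Hence the form is $S_n$-invariant on $M^\lambda$. It is the standard Hermitian inner product in that basis, so it is positive definite. Both properties restrict to the subspace $S^\lambda$, and this restriction is immediate.

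\textbf{Polytabloid basis (property (2)).} First, linear independence.
\begin{enumerate}
\item For $T\in\mathrm{Std}(\lambda)$, the tabloid $\{T\}$ occurs in $v_T$ with coefficient $1$.
\item Every other tabloid $\{\sigma T\}$ with $\sigma\in C(T)$ is strictly smaller than $\{T\}$ in the dominance order on tabloids. This holds because column permutations of a column-increasing tableau move larger entries to higher rows.
\item The tabloids $\{T\}$ for $T$ standard are pairwise distinct.
\end{enumerate}
A maximal-element argument then gives linear independence.

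Second, spanning, via the Garnir straightening algorithm.
\begin{enumerate}
\item An arbitrary $v_T$ equals $\pm v_{T'}$ for a column-increasing $T'$.
\item If $T'$ is not standard, pick a row descent and the corresponding Garnir element $g$ satisfying $g\, v_{T'}=0$.
\item This expresses $v_{T'}$ as a combination of polytabloids $v_{T''}$ whose column tabloids are strictly larger in the column dominance order.
\item Induction on that order shows that every polytabloid lies in the span of standard ones.
\end{enumerate}
I expect establishing the Garnir relation $g\,v_{T'}=0$, with its correct coset bookkeeping, to be the main obstacle. I would prove it by noting that $g\,b_{T'}$ lies in the left ideal generated by the alternating sum over $S_{A\cup B}$. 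Here $|A\cup B|$ exceeds the column length, so this element kills $\{T'\}$ by pigeonhole: some transposition in $S_{A\cup B}$ fixes the tabloid.

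\textbf{Integrality (property (3)).} Since $\sigma v_T=v_{\sigma T}$, the action of $\sigma$ on a standard basis vector is a polytabloid. Straightening only uses integer Garnir relations with coefficients $\pm1$, so it expresses that polytabloid as an integer combination of standard polytabloids. This gives integer matrices.

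\textbf{Irreducibility and completeness (property (1)).} Use James' submodule theorem. For any submodule $U\subseteq M^\lambda$, either $U\supseteq S^\lambda$ or $U\subseteq (S^\lambda)^\perp$. This follows because $b_T u\in\mathbb{C}v_T$ for all $u\in M^\lambda$; the proof is again the pigeonhole argument on rows versus columns.
\begin{enumerate}
\item Irreducibility: intersect with $S^\lambda$ and use that the form restricted to $S^\lambda$ is positive definite, by (4).
\item Non-isomorphism: a nonzero homomorphism $S^\lambda\to M^\mu$ forces $\lambda\trianglerighteq\mu$, again because $b_T\{T'\}=0$ unless dominance holds. So $S^\lambda\cong S^\mu$ implies $\lambda=\mu$.
\item Completeness: the number of partitions of $n$ equals the number of conjugacy classes of $S_n$.
\end{enumerate}

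All of this is standard and follows \cite[Chapter~7]{JamesKerber1981}.
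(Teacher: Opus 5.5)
The paper gives no proof of this theorem; it defers to \cite[Chapter~7]{JamesKerber1981}. Your outline is the standard James-style argument found there, so the overall route matches. There is, however, a genuine error in the step you yourself flagged as the crux, the Garnir relation.

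Write $\varepsilon_X=\sum_{\sigma\in S_X}\mathrm{sgn}(\sigma)\sigma$. The coset decomposition $S_{A\cup B}=\bigsqcup_\pi \pi\,(S_A\times S_B)$ gives $\varepsilon_{A\cup B}=g\,\varepsilon_A\varepsilon_B$. Since $S_A\times S_B\subseteq C(T')$, you also have $\varepsilon_A\varepsilon_B\,b_{T'}=|S_A\times S_B|\,b_{T'}$. Hence $g\,b_{T'}=|S_A\times S_B|^{-1}\,\varepsilon_{A\cup B}\,b_{T'}$.

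So the alternating sum sits on the \emph{left}. That puts $g\,b_{T'}$ in the right ideal $\varepsilon_{A\cup B}\,\mathbb{C}[S_n]$, and in general not in the left ideal $\mathbb{C}[S_n]\,\varepsilon_{A\cup B}$. For a concrete case, take $\lambda=(2,2)$, $T'$ with rows $(1,2)$ and $(4,3)$, $A=\{4\}$, $B=\{2,3\}$. Then $g\,b_{T'}=\varepsilon_{\{2,3,4\}}(1-(1\,4))$. Right multiplication by $(3\,4)$ does not negate this element, yet it would have to negate every element of $\mathbb{C}[S_4]\,\varepsilon_{\{2,3,4\}}$. Consequently, knowing that some transposition of $S_{A\cup B}$ fixes $\{T'\}$ is not enough to conclude $g\,v_{T'}=0$.

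What you actually need is $\varepsilon_{A\cup B}\{\sigma T'\}=0$ for every $\sigma\in C(T')$, that is, for every tabloid occurring in $v_{T'}$. Your pigeonhole argument does give this. Each $\sigma\in C(T')$ preserves the set of entries in each column, so $A\cup B$ still lies in columns $j$ and $j+1$ of $\sigma T'$. These two columns together occupy only $\lambda'_j<|A\cup B|$ rows. Hence two entries $a,b\in A\cup B$ share a row, so $(a\,b)\{\sigma T'\}=\{\sigma T'\}$, which forces $\varepsilon_{A\cup B}\{\sigma T'\}=-\varepsilon_{A\cup B}\{\sigma T'\}$. With this repair, spanning and integrality go through as you describe.

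A smaller point in (1): a homomorphism $\theta\colon S^\lambda\to M^\mu$ is not defined on $\{T\}$. So you cannot directly apply the fact that $b_T$ kills $\mu$-tabloids unless $\lambda$ dominates $\mu$. Instead use $b_Tv_T=|C(T)|\,v_T$ to write $\theta(v_T)=|C(T)|^{-1}\,b_T\,\theta(v_T)$ with $\theta(v_T)\in M^\mu$. Alternatively, extend $\theta$ by zero on $(S^\lambda)^\perp$.
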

\subsection{Young's seminormal form}
Young's seminormal form provides a sparse, rational matrix representation for the irreducible representations of the symmetric group. We summarize the construction for simple reflections below, following \cite[Ch.~3]{JamesKerber1981}.

Fix a partition $\lambda\vdash n$. We order the set of standard tableaux $\mathrm{Std}(\lambda)$ according to the last letter order. For a tableau $T\in \mathrm{Std}(\lambda)$ and $i\in \{1,\dots,n-1\}$, we define the \emph{axial distance} $a_i(T)$ as the signed distance between $i$ and $i+1$ in $T$. Explicitly, if $c_k$ and $r_k$ denote the column and row indices of the entry $k$, then $a_i(T) = (c_{i+1} - r_{i+1}) - (c_i - r_i)$. For each $T \in \mathrm{Std}(\lambda)$, let $e_T$ be the corresponding basis vector.

\begin{thm}[Young's seminormal form]
    The vector space spanned by $\{e_T \mid T \in \mathrm{Std}(\lambda)\}$ affords an irreducible representation isomorphic to $S^\lambda$. The action of the simple reflection $s_i$ is given by:
    $$s_{i}e_{T}=\begin{cases}
    e_{T}, & i,i+1\text{ are in the same row},\\
    -e_{T}, & i,i+1\text{ are in the same column},\\
    a_{i}(T)^{-1}e_{T}+e_{s_{i}T}, & s_{i}T\in\mathrm{Std}(\lambda)\text{ and }T<s_{i}T,\\
    (1-a_{i}(T)^{-2})e_{s_{i}T}-a_{i}(T)^{-1}e_{T}, & s_{i}T\in\mathrm{Std}(\lambda)\text{ and }s_{i}T<T.
    \end{cases}$$
\end{thm}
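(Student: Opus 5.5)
Since the formulas are given explicitly, I will verify directly that they define a representation of $S_n$ on $V=\bigoplus_{T\in\mathrm{Std}(\lambda)}\mathbb{C}e_T$, and then identify that representation with $S^\lambda$ by a character argument.

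\textbf{Step 1: local structure of each $s_i$.} For a fixed $i$, the basis splits into three kinds of vector: $e_T$ with $i,i+1$ in the same row, $e_T$ with $i,i+1$ in the same column, and pairs $\{e_T,e_{s_iT}\}$ with both tableaux standard and $T<s_iT$. On a pair, $s_i$ acts by
\[
\begin{pmatrix} a^{-1} & 1-a^{-2}\\ 1 & -a^{-1}\end{pmatrix},\qquad a=a_i(T),\quad a_i(s_iT)=-a .
\]
In both remaining cases $|a|\ge 2$, so $a\neq 0$. The $2\times 2$ block squares to the identity, which gives $s_i^2=1$.

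\textbf{Step 2: far commutation.} Take $|i-j|\ge2$. Then $s_j$ does not move $i,i+1$, so $a_i(s_jT)=a_i(T)$, and the block decompositions for $s_i$ and $s_j$ are compatible. This gives $s_is_j=s_js_i$.

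\textbf{Step 3: braid relation $s_is_{i+1}s_i=s_{i+1}s_is_{i+1}$.} Both operators act only on the positions of $i,i+1,i+2$. The span of the $e_{\sigma T}$, for $\sigma\in\langle s_i,s_{i+1}\rangle$ with $\sigma T$ standard, is therefore invariant, and has dimension $1,2,3$ or $6$. The relation only has to be checked on these pieces, and the check depends only on the contents $c_i-r_i$, $c_{i+1}-r_{i+1}$, $c_{i+2}-r_{i+2}$.

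The $6$-dimensional case needs three distinct contents $x,y,z$, with axial distances $y-x$, $z-y$, $z-x$. Here the relation reduces to an $S_3$ identity in $x,y,z$; I would verify it once symbolically, using $a\mapsto a^{-1}$ coefficients. The smaller cases arise when two of the three letters share a row or column. They are degenerate versions of the same computation, in which $(1-a^{-2})$ factors vanish or $\pm1$ eigenvalues appear.

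I expect this braid check to be the main obstacle. To reduce it, I would note that the formulas are exactly those obtained from Jucys--Murphy eigenvalues. Concretely, I would define $L_k$ to act diagonally on $e_T$ by the content of $k$, and check that $s_iL_i-L_{i+1}s_i=-1$. The relations then follow from the degenerate affine Hecke algebra relations. Failing that, I would fall back on direct computation.

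\textbf{Step 4: irreducibility and identification with $S^\lambda$.} I would argue by induction on $n$ using the branching rule. Restricted to $S_{n-1}$, the span of $\{e_T\}$ for $T$ with $n$ in a fixed removable corner is, by induction, the seminormal model of $S^{\lambda\setminus\square}$. Hence $V|_{S_{n-1}}\cong\bigoplus S^{\mu}$ over $\mu=\lambda\setminus\square$, which is multiplicity-free. The same holds for $S^\lambda$ by the branching rule for Specht modules.

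For irreducibility, I would use $s_{n-1}$: it mixes the blocks, since $1-a^{-2}\neq 0$ whenever both $T$ and $s_{n-1}T$ are standard. The Jucys--Murphy element $L_n$ separates the blocks by content. So an invariant subspace is a sum of blocks, and the connectivity of the block graph under $s_{n-1}$ forces irreducibility.

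Finally, an irreducible representation whose restriction matches that of $S^\lambda$ is isomorphic to $S^\lambda$ for $n\ge3$. This holds because restrictions determine $\lambda$: the shape is recovered from its set of removable corners. Small $n$ I would check directly.
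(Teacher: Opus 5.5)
\paragraph{Verdict.} The paper gives no proof of this theorem; it is quoted from James--Kerber. Your outline therefore has to stand on its own. Its architecture is standard and sound: check the Coxeter relations on the explicit model, then use induction and branching for irreducibility and identification. But Step 1 does not do what it claims, because the theorem as printed has a sign error.

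\paragraph{The sign error in Step 1.} Take a pair $T<s_iT$ and put $a=a_i(T)$. Note that $a\ge 2$, because $T<s_iT$ in the last letter order means $i$ lies strictly below and to the left of $i+1$ in $T$. Since $a_i(s_iT)=-a$, the fourth displayed case gives
\[
s_ie_{s_iT}=(1-a^{-2})e_T-a_i(s_iT)^{-1}e_{s_iT}=(1-a^{-2})e_T+a^{-1}e_{s_iT}.
\]
So the block is $\begin{pmatrix}a^{-1}&1-a^{-2}\\1&a^{-1}\end{pmatrix}$, not the matrix you wrote. It is not a scalar matrix and has trace $2a^{-1}\neq0$, so it is not an involution. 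Hence the statement as printed is false.

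A concrete check with $\lambda=(2,1)$: let $T_1<T_2$ be the tableaux with rows $1\,3$, $2$ and $1\,2$, $3$. The printed formulas give $s_2e_{T_1}=\tfrac12e_{T_1}+e_{T_2}$ and $s_2e_{T_2}=\tfrac34e_{T_1}+\tfrac12e_{T_2}$, hence $s_2^2e_{T_1}=e_{T_1}+e_{T_2}$.

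The correct fourth case is $(1-a_i(T)^{-2})e_{s_iT}+a_i(T)^{-1}e_T$, so the diagonal coefficient is $a_i(T)^{-1}$ in all four cases. This is exactly what your own relation $s_iL_i-L_{i+1}s_i=-1$ forces: comparing $e_T$-coefficients gives $(c_i-c_{i+1})\cdot(\text{diagonal coefficient})=-1$. Your matrix is the corrected version, so your argument proves the corrected theorem. Say so explicitly, rather than presenting the block as a consequence of the displayed formulas.

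\paragraph{The braid relation in Step 3.} The claim that the braid relation ``follows from the degenerate affine Hecke algebra relations'' is circular, since the braid relation is one of that algebra's defining relations. On $V$ you can directly check $s_i^2=1$, $s_iL_i-L_{i+1}s_i=-1$, and $s_iL_j=L_js_i$ for $j\ne i,i+1$. These identities do not imply the braid relation. They do, however, reduce it to a short check that can replace the symbolic $S_3$ computation:
\begin{itemize}
\item They show that $\Delta=s_is_{i+1}s_i-s_{i+1}s_is_{i+1}$ satisfies $\Delta L_i=L_{i+2}\Delta$ and $\Delta L_{i+2}=L_i\Delta$, and commutes with every other $L_j$.
\item A standard tableau is determined by its content vector, so $\Delta e_T$ is a multiple of $e_{(i\,i+2)T}$. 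It is zero when $(i\,i+2)T$ is not standard, which disposes of every block of dimension less than $6$.
\item In a $6$-dimensional block, let $x,y,z$ be the contents of $i,i+1,i+2$ in $T$. The multiple is $P-P'$, where $P$ and $P'$ are the products of the off-diagonal coefficients along the paths $T\to s_iT\to s_{i+1}s_iT\to(i\,i+2)T$ and $T\to s_{i+1}T\to s_is_{i+1}T\to(i\,i+2)T$.
\item Both products equal $\kappa(y-x)\kappa(z-x)\kappa(z-y)$, where $\kappa(b)=1$ for $b>0$ and $\kappa(b)=1-b^{-2}$ for $b<0$. This uses $T<s_iT\iff a_i(T)>0$, which is also what makes your normalization depend only on contents.
\end{itemize}

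\paragraph{Using $L_n$ in Step 4.} The diagonal operator $L_n$ only separates invariant subspaces once you know it lies in the image of $\mathbb{C}S_n$. This follows from $L_1=0$ together with $L_{k+1}=s_kL_ks_k+s_k$. Alternatively, drop $L_n$ and use the multiplicity-freeness of $V|_{S_{n-1}}$ directly.
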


\subsection{Dual representation and invariant forms}

Throughout the paper, we rely heavily on the following two basic constructions. Let $G$ be a finite group and $V$ be an $\mathbb{R}G$-module. We have the \emph{dual representation} of $V$, denoted $V^*=\mathrm{Hom}(V,\mathbb{R})$, with the linear $G$-action given by
\[
    G \times V^* \to V^*, \quad (g,f) \mapsto (v \mapsto f(g^{-1}v)).
\]
If $\rho \colon G \to \mathrm{GL}_d(\mathbb{R})$ is the matrix representation of $V$ for a fixed basis, the matrix representation for $V^*$ with respect to the dual basis is given by $g \mapsto \rho(g^{-1})^t$.

\begin{example}
    \begin{enumerate}
        \item Let $(W,S)$ be a Coxeter group and $\rho \colon W \to \mathrm{GL}_d(V)$ a matrix representation. Then the dual representation is determined by the images of the simple reflections. Since $s^2=1$ implies $s=s^{-1}$ for any $s \in S$, the dual action is simply given by $\rho^*(s) = \rho(s)^t$.
        
        \item Let $\Gamma=\Gamma(X,Y,I,\mu)$ be a $W$-graph and consider the corresponding $W$-representation at $v=1$. The dual representation acts on the dual basis $\{e_x\}_{x \in X}$ as follows:
        \[
        s_{i} \cdot e_x = \begin{cases}
             e_x, & i \in I^{c}(x), \\
            -e_x + \sum_{y: \, i \in I^c(y), \, x \sim y} \mu(x,y)e_{y}, & i \notin I^{c}(x),
        \end{cases}
        \]
        where $I^c(x)=S \setminus I(x)$ is the complement of the descent set at vertex $x$.\footnote{Note that in this normalization, the operator $1+s_i$ acts as multiplication by $2$ on the diagonal when $i \in I^c(x)$.}
    \end{enumerate}
\end{example}

Schur's Lemma and the averaging method for finite groups imply the following fact (see \cite[Sec. 13.1-13.2]{SerreLinRepsFiniteGroups} for the classification of real irreducible modules and their invariant forms).

\begin{lem}
    Let $G$ be a finite group and $V$ an irreducible $\mathbb{R}G$-module. Then there exists a positive definite symmetric bilinear $G$-invariant form on $V$, which is unique up to a positive scalar. Consequently, the form induces an isomorphism $V \cong V^*$ of $\mathbb{R}G$-modules.
\end{lem}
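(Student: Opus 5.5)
Existence comes from averaging and uniqueness from Schur's Lemma. We then construct the isomorphism $V\cong V^*$ directly from the form.

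\emph{Existence.} Take any positive definite symmetric bilinear form $\langle\cdot,\cdot\rangle_0$ on the finite-dimensional real vector space $V$, for instance the standard inner product in some basis. Average it over the group:
\[
\langle v,w\rangle=\frac{1}{|G|}\sum_{g\in G}\langle gv,gw\rangle_0 .
\]
This form is bilinear and symmetric. It is $G$-invariant, since reindexing the sum absorbs any $h\in G$. It is positive definite, because each summand $\langle gv,gv\rangle_0$ is positive for $v\neq 0$. Irreducibility is not needed for this step.

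\emph{Uniqueness.} Let $\beta$ be a second positive definite symmetric invariant form. Since $\langle\cdot,\cdot\rangle$ is nondegenerate, there is a unique linear map $A\colon V\to V$ with $\beta(v,w)=\langle Av,w\rangle$. Invariance of both forms makes $A$ commute with the $G$-action. Symmetry of $\beta$ makes $A$ self-adjoint with respect to $\langle\cdot,\cdot\rangle$. By the spectral theorem, $A$ has a real eigenvalue $c$, and $\ker(A-c)$ is a nonzero $G$-submodule. Irreducibility of $V$ then forces $A=c\cdot\mathrm{id}$, so $\beta=c\langle\cdot,\cdot\rangle$. Positive definiteness of $\beta$ gives $c>0$.

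The main obstacle is that over $\mathbb{R}$, Schur's Lemma only says $\mathrm{End}_G(V)$ is a division algebra ($\mathbb{R}$, $\mathbb{C}$ or $\mathbb{H}$), not that it equals $\mathbb{R}$. Self-adjointness of $A$ is exactly what circumvents this: it guarantees a real eigenvalue. The endomorphism algebra itself is never used.

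\emph{Isomorphism $V\cong V^*$.} Define $\phi\colon V\to V^*$ by $v\mapsto\langle v,\cdot\rangle$. Nondegeneracy together with $\dim V=\dim V^*$ makes $\phi$ a linear isomorphism. It is $G$-equivariant, because
\[
(g\cdot\phi(v))(w)=\langle v,g^{-1}w\rangle=\langle gv,w\rangle=\phi(gv)(w),
\]
where the middle equality is invariance of the form. Hence $\phi$ is an isomorphism of $\mathbb{R}G$-modules.
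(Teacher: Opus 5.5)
Your proof is correct and follows the route the paper indicates. The paper gives no written proof; it only cites the averaging method and Schur's Lemma, with a pointer to Serre for the real case. Your argument matches this: averaging gives existence, a Schur-type argument gives uniqueness, and the form itself gives $V\cong V^*$. Using self-adjointness of $A$ to get a real eigenvalue is exactly what makes the Schur step valid over $\mathbb{R}$, where $\mathrm{End}_G(V)$ may be $\mathbb{C}$ or $\mathbb{H}$, so it covers the subtlety the paper delegates to Serre.
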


An example of such an invariant form is given for the Specht modules $S^\lambda$ in Theorem \ref{thm:polytabloid}.

\section{Experiments for Computing the Kazhdan--Lusztig Basis at \texorpdfstring{$v=1$}{v=1}}

Let $(W,S)$ be a Coxeter system with $W$ finite. Let $\dot{b}_{x}$ denote the specialization of the Kazhdan--Lusztig basis element $b_x$ at $v=1$. We summarize the following key properties of this specialization:
\begin{enumerate}
    \item $h_{y,x}(1) \in \mathbb{Z}_{\geq 0}$;
    \item $\dot{b}_{s} = s+1$;
    \item $\dot{b}_{x} \dot{b}_{y} = \sum_{z\in W} \mu_{xy}^{z} \dot{b}_{z}$, with $\mu_{xy}^{z} \in \mathbb{Z}_{\geq 0}$;
    \item $\dot{b}_{w_0} = \sum_{y\in W} y$.
\end{enumerate}
This leads to the following optimization problem.

\begin{optproblem}\label{conj:KLBasisAt1}
    For $x\in W,$ consider $A_{x}={\displaystyle \sum_{y\leq x}}a_{y,x}y\in\mathbb{R}[W]$ and minimize  $${\displaystyle \sum_{w\in W}\sum_{y\leq x}a_{y,x}}$$
    subject to the constraints
    \begin{enumerate}
    \item $a_{y,x}\in\mathbb{R}_{\geq0}$;
    \item $A_{s}=s+1$, for all $s\in S$;
    \item $A_xA_y\in \bigoplus_z\mathbb{R}_{\geq0}A_z$, for all $x,y\in W$;
    \item $A_{w_{0}}={\displaystyle \sum_{y\in W}y}$.
    \end{enumerate}
\end{optproblem}

\begin{problem}
   Does the optimization problem  \ref{conj:KLBasisAt1} admit a unique solution? Is this solution the Kazhdan--Lusztig basis?
\end{problem}

\begin{rem}
Note that Problem \ref{conj:KLBasisAt1} is a quadratic optimization problem. The evaluation of the Kazhdan--Lusztig basis at $v=1$ is a feasible solution. In type $A$, the Kazhdan--Lusztig basis is the \emph{unique} feasible solution for $n \leq 6$. In type $A_7$, the specialization at $v=1$ of the $2$-canonical basis provides another feasible solution, which is not a minimizer, see \cite{Williamson2012,JensenWilliamson2017}. This is remarkable, since in other types uniqueness fails much earlier, e.g. for $n=2$ in type $B_n$.
We have verified that the Kazhdan--Lusztig basis at $v=1$ is the unique solution to Problem \ref{conj:KLBasisAt1}, for all finite Coxeter groups with rank $\leq3$. In our experiments, condition (4) is only necessary for dihedral groups $I_{2}(2m)$, but leads to a speed-up for other groups as well. The main bottleneck lies in the large number of quadratic constraints coming from (3) and without further simplifications, we need to consider $\lvert W\rvert^{3}$ variables of the form $\tilde{\mu}_{xy}^{z}$.
\end{rem}
If we drop condition (4) we obtain the following minimal basis, when $W=I_{2}(2m)$.
\begin{rem}
Let $m$ be even and let $W=\langle s,t\mid$$s^{2}=t^{2}=(st)^{m}\rangle$. We define $A_{x}\in \mathbb{R}[W]$ recursively as follows:
\begin{align*}
 & A_{x}=\begin{cases}
A_{xs}s & xs<x\\
sA_{sx} & sx<x\\
A_{t}A_{tx}-A_{stxts} & \ell(x)\geq5\text{ and }xs>x\text{ and }sx>x\\
A_{xt}A_{t}-A_{stxts} & \ell(x)\geq5\text{ and }xs>x\text{ and }sx>x
\end{cases}.
\end{align*}
Where the first elements are given by $A_{s}=s+1,A_{t}=t+1,A_{st}=s\cdot A_{t},A_{ts}=A_{t}\cdot s,A_{sts}=A_{st}\cdot s,A_{tst}=A_{t}A_{st}$.
We can check that the structure constants are non-negative. For example, for $m=6$ we have
\begin{align*}
A_{st}\cdot A_{stst} & =sA_{t}sA_{t}sA_{t}\\
 & =sA_{tsts}A_{t}=s(A_{tstst}+A_{t})=A_{ststst}+A_{st}.
\end{align*}
In Figure \ref{fig:dihedral_matrices}, we see the basis written in the standard basis of $\mathbb{Z}[W]$ in the case $m=6,m=10$ and elements ordered in the form ${\rm id},s,t,st,ts,sts,tst,\dots$ Analogously, we can obtain another basis by exchanging the roles of $s$ and $t$.
\begin{figure}[H]
    \centering
    \begin{subfigure}[t]{0.45\textwidth}
        \centering
        \resizebox{\linewidth}{!}{\begin{tikzpicture}[scale=0.5]
    \definecolor{pink}{rgb}{1,0.7,0.8}
    \definecolor{lightyellow}{rgb}{1,1,0.6}
    \definecolor{orange}{rgb}{1,0.6,0.2}
    \definecolor{lightblue}{rgb}{0.5,1,1}
    \definecolor{blue}{rgb}{0.4,0.8,1}
    \definecolor{aqua}{rgb}{0.4,1,0.8}

    \foreach \y [count=\row from 1] in {
        {1,1,1,0,0,1,0,1,1,0,1,0},
        {0,1,0,1,1,0,1,0,0,1,0,1},
        {0,0,1,0,0,0,0,1,1,0,1,0},
        {0,0,0,1,0,0,1,0,0,1,0,1},
        {0,0,0,0,1,0,1,0,0,1,0,1},
        {0,0,0,0,0,1,0,1,1,0,1,0},
        {0,0,0,0,0,0,1,0,0,0,0,1},
        {0,0,0,0,0,0,0,1,0,0,1,0},
        {0,0,0,0,0,0,0,0,1,0,1,0},
        {0,0,0,0,0,0,0,0,0,1,0,1},
        {0,0,0,0,0,0,0,0,0,0,1,0},
        {0,0,0,0,0,0,0,0,0,0,0,1},
    }{
        \foreach \x [count=\col from 1] in \y {
    \ifnum \row<3
        \ifnum\col<3
            \fill[orange!50] (\col-1,-\row) rectangle (\col,-\row+1); 
        \else
            \ifnum\col<21
                \fill[blue!50] (\col-1,-\row) rectangle (\col,-\row+1); 
            \fi
            \ifnum\col<19
                \fill[green!50] (\col-1,-\row) rectangle (\col,-\row+1); 
            \fi
            \ifnum\col<15
                \fill[blue!50] (\col-1,-\row) rectangle (\col,-\row+1); 
            \fi
            \ifnum\col<11
                \fill[green!50] (\col-1,-\row) rectangle (\col,-\row+1); 
            \fi
            \ifnum\col<7
                \fill[blue!50] (\col-1,-\row) rectangle (\col,-\row+1); 
            \fi
        \fi
    \else
           \ifnum\row<21
                \ifnum\col>18
                    \ifnum\col>2
                        \ifnum\col<21
                            \fill[yellow!50] (\col-1,-\row) rectangle (\col,-\row+1); 
                        \fi
                    \fi
                \fi
            \fi
           \ifnum\row<19
                \ifnum\col>14
                    \ifnum\col>2
                        \ifnum\col<21
                            \fill[blue!50] (\col-1,-\row) rectangle (\col,-\row+1); 
                        \fi
                        \ifnum\col<19
                            \fill[yellow!50] (\col-1,-\row) rectangle (\col,-\row+1); 
                        \fi
                    \fi
                \fi
            \fi
           \ifnum\row<15
                \ifnum\col>10
                    \ifnum\col>2
                        \ifnum\col<21
                            \fill[green!50] (\col-1,-\row) rectangle (\col,-\row+1); 
                        \fi
                        \ifnum\col<19
                            \fill[blue!50] (\col-1,-\row) rectangle (\col,-\row+1); 
                        \fi
                        \ifnum\col<15
                            \fill[yellow!50] (\col-1,-\row) rectangle (\col,-\row+1); 
                        \fi
                    \fi
                \fi
            \fi
           \ifnum\row<11
                \ifnum\col>6
                    \ifnum\col>2
                        \ifnum\col<21
                            \fill[blue!50] (\col-1,-\row) rectangle (\col,-\row+1); 
                        \fi
                        \ifnum\col<19
                            \fill[green!50] (\col-1,-\row) rectangle (\col,-\row+1); 
                        \fi
                        \ifnum\col<15
                            \fill[blue!50] (\col-1,-\row) rectangle (\col,-\row+1); 
                        \fi
                        \ifnum\col<11
                            \fill[yellow!50] (\col-1,-\row) rectangle (\col,-\row+1); 
                        \fi
                    \fi
                \fi
            \fi
           \ifnum\row<7
                \ifnum\col>2
                    \ifnum\col<21
                        \fill[green!50] (\col-1,-\row) rectangle (\col,-\row+1); 
                    \fi
                    \ifnum\col<19
                        \fill[blue!50] (\col-1,-\row) rectangle (\col,-\row+1); 
                    \fi
                    \ifnum\col<15
                        \fill[green!50] (\col-1,-\row) rectangle (\col,-\row+1); 
                    \fi
                    \ifnum\col<11
                        \fill[blue!50] (\col-1,-\row) rectangle (\col,-\row+1); 
                    \fi
                    \ifnum\col<7
                        \fill[yellow!50] (\col-1,-\row) rectangle (\col,-\row+1); 
                    \fi
                \fi
            \fi
        \fi
    \node at (\col-0.5,-\row+0.5) {\x};
}
    }
\end{tikzpicture}}
        \caption{$W = I_2(6)$}
        \label{fig:matrixD6}
    \end{subfigure}
    \hfill
    \begin{subfigure}[t]{0.45\textwidth}
        \centering
        \resizebox{\linewidth}{!}{\begin{tikzpicture}[scale=0.5]
    \definecolor{pink}{rgb}{1,0.7,0.8}
    \definecolor{lightyellow}{rgb}{1,1,0.6}
    \definecolor{orange}{rgb}{1,0.6,0.2}
    \definecolor{lightblue}{rgb}{0.5,1,1}
    \definecolor{blue}{rgb}{0.4,0.8,1}
    \definecolor{aqua}{rgb}{0.4,1,0.8}

    \foreach \y [count=\row from 1] in {
        {1,1,1,0,0,1,0,1,1,0,1,0,0,1,0,1,1,0,1,0},
        {0,1,0,1,1,0,1,0,0,1,0,1,1,0,1,0,0,1,0,1},
        {0,0,1,0,0,0,0,1,1,0,1,0,0,1,0,1,1,0,1,0},
        {0,0,0,1,0,0,1,0,0,1,0,1,1,0,1,0,0,1,0,1},
        {0,0,0,0,1,0,1,0,0,1,0,1,1,0,1,0,0,1,0,1},
        {0,0,0,0,0,1,0,1,1,0,1,0,0,1,0,1,1,0,1,0},
        {0,0,0,0,0,0,1,0,0,0,0,1,1,0,1,0,0,1,0,1},
        {0,0,0,0,0,0,0,1,0,0,1,0,0,1,0,1,1,0,1,0},
        {0,0,0,0,0,0,0,0,1,0,1,0,0,1,0,1,1,0,1,0},
        {0,0,0,0,0,0,0,0,0,1,0,1,1,0,1,0,0,1,0,1},
        {0,0,0,0,0,0,0,0,0,0,1,0,0,0,0,1,1,0,1,0},
        {0,0,0,0,0,0,0,0,0,0,0,1,0,0,1,0,0,1,0,1},
        {0,0,0,0,0,0,0,0,0,0,0,0,1,0,1,0,0,1,0,1},
        {0,0,0,0,0,0,0,0,0,0,0,0,0,1,0,1,1,0,1,0},
        {0,0,0,0,0,0,0,0,0,0,0,0,0,0,1,0,0,0,0,1},
        {0,0,0,0,0,0,0,0,0,0,0,0,0,0,0,1,0,0,1,0},
        {0,0,0,0,0,0,0,0,0,0,0,0,0,0,0,0,1,0,1,0},
        {0,0,0,0,0,0,0,0,0,0,0,0,0,0,0,0,0,1,0,1},
        {0,0,0,0,0,0,0,0,0,0,0,0,0,0,0,0,0,0,1,0},
        {0,0,0,0,0,0,0,0,0,0,0,0,0,0,0,0,0,0,0,1},
    }{
        \foreach \x [count=\col from 1] in \y {
    \ifnum \row<3
        \ifnum\col<3
            \fill[orange!50] (\col-1,-\row) rectangle (\col,-\row+1); 
        \else
            \ifnum\col<21
                \fill[blue!50] (\col-1,-\row) rectangle (\col,-\row+1); 
            \fi
            \ifnum\col<19
                \fill[green!50] (\col-1,-\row) rectangle (\col,-\row+1); 
            \fi
            \ifnum\col<15
                \fill[blue!50] (\col-1,-\row) rectangle (\col,-\row+1); 
            \fi
            \ifnum\col<11
                \fill[green!50] (\col-1,-\row) rectangle (\col,-\row+1); 
            \fi
            \ifnum\col<7
                \fill[blue!50] (\col-1,-\row) rectangle (\col,-\row+1); 
            \fi
        \fi
    \else
           \ifnum\row<21
                \ifnum\col>18
                    \ifnum\col>2
                        \ifnum\col<21
                            \fill[yellow!50] (\col-1,-\row) rectangle (\col,-\row+1); 
                        \fi
                    \fi
                \fi
            \fi
           \ifnum\row<19
                \ifnum\col>14
                    \ifnum\col>2
                        \ifnum\col<21
                            \fill[blue!50] (\col-1,-\row) rectangle (\col,-\row+1); 
                        \fi
                        \ifnum\col<19
                            \fill[yellow!50] (\col-1,-\row) rectangle (\col,-\row+1); 
                        \fi
                    \fi
                \fi
            \fi
           \ifnum\row<15
                \ifnum\col>10
                    \ifnum\col>2
                        \ifnum\col<21
                            \fill[green!50] (\col-1,-\row) rectangle (\col,-\row+1); 
                        \fi
                        \ifnum\col<19
                            \fill[blue!50] (\col-1,-\row) rectangle (\col,-\row+1); 
                        \fi
                        \ifnum\col<15
                            \fill[yellow!50] (\col-1,-\row) rectangle (\col,-\row+1); 
                        \fi
                    \fi
                \fi
            \fi
           \ifnum\row<11
                \ifnum\col>6
                    \ifnum\col>2
                        \ifnum\col<21
                            \fill[blue!50] (\col-1,-\row) rectangle (\col,-\row+1); 
                        \fi
                        \ifnum\col<19
                            \fill[green!50] (\col-1,-\row) rectangle (\col,-\row+1); 
                        \fi
                        \ifnum\col<15
                            \fill[blue!50] (\col-1,-\row) rectangle (\col,-\row+1); 
                        \fi
                        \ifnum\col<11
                            \fill[yellow!50] (\col-1,-\row) rectangle (\col,-\row+1); 
                        \fi
                    \fi
                \fi
            \fi
           \ifnum\row<7
                \ifnum\col>2
                    \ifnum\col<21
                        \fill[green!50] (\col-1,-\row) rectangle (\col,-\row+1); 
                    \fi
                    \ifnum\col<19
                        \fill[blue!50] (\col-1,-\row) rectangle (\col,-\row+1); 
                    \fi
                    \ifnum\col<15
                        \fill[green!50] (\col-1,-\row) rectangle (\col,-\row+1); 
                    \fi
                    \ifnum\col<11
                        \fill[blue!50] (\col-1,-\row) rectangle (\col,-\row+1); 
                    \fi
                    \ifnum\col<7
                        \fill[yellow!50] (\col-1,-\row) rectangle (\col,-\row+1); 
                    \fi
                \fi
            \fi
        \fi
    \node at (\col-0.5,-\row+0.5) {\x};
}
    }
\end{tikzpicture}}
        \caption{$W = I_2(10)$}
        \label{fig:matrixD10}
    \end{subfigure}

    \caption{Optimal solution in the case where $W$ is of type $I_2(6)$ and $I_2(10)$ in Problem~\ref{conj:KLBasisAt1} when dropping condition (4) in Problem \ref{conj:KLBasisAt1}. Blocks of the same form are highlighted in the same colour.}
    \label{fig:dihedral_matrices}
\end{figure}

\end{rem}

We can simplify condition 3 in Problem \ref{conj:KLBasisAt1} by imposing vanishing conditions on the structure constants, for instance $\mu_{x,y}^z=0$ if $\ell(x)+\ell(y)<\ell(z)$, which we know hold for the Kazhdan--Lusztig basis. However, in general the number of necessary variables remains $\mathcal{O}(\lvert W\rvert^3)$.

For type $A$, we can exploit the action of $b_s$ for $s\in S$ on the Kazhdan--Lusztig basis and omit the optimization criteria in the following, which we have checked up to $A_{6}$, i.e. $S_{7}$. 
\begin{optproblem}\label{conj:KLBasisAt1TypeA}
    Let $W$ be of type $A$. Consider all  $A_{x}={\displaystyle \sum_{y\leq x}}a_{y,x}y\in\mathbb{R}[W]$ with
\begin{enumerate}
\item $A_{s}=s+1$ and
\item $A_{s}A_{x}=\begin{cases}
2A_{x}, & sx<x,\\
A_{sx}+\sum_{_{sy<y}^{y<x}}\tilde{\mu}(y,x)A_{y}, & sx>x,
\end{cases}$ \hspace{0.5em} with $\tilde{\mu}(y,x)\in\mathbb{R}_{\geq0}$.
\end{enumerate}
\end{optproblem}

We can further exploit the scarcity of distinct values of the structure constants $\mu$ by introducing more conditions on them, for instance, as in \cite{Warrington2011}.

\section{A Maximal Cone Condition for Kazhdan--Lusztig Left Cell Representations}\label{sec:compute_cones}

While the optimization approach in the group ring works well for small rank, we face a computational barrier at $S_8$. This is mainly due to the large number of structure constants needed to define the basis globally. Therefore, we restrict our attention to specific irreducible representations.

Let $W=S_{n}$ be the symmetric group generated by simple reflections $S$, so $W$ is of Coxeter type $A_{n-1}$, and let $\lambda$ be a partition of $n$. We consider the Specht module $S^{\lambda}$ and the corresponding Kazhdan--Lusztig left cell representation \cite{KazhdanLusztig79}. We aim to recover this representation intrinsically, avoiding expensive computations of Kazhdan--Lusztig polynomials. For this, we recall that in the Kazhdan--Lusztig basis, the matrices of $1+s\in \mathrm {End}(S^\lambda)$ only have non-negative entries, for all $s\in S$. The same is true for Young's seminormal representation, which follows immediately, since every non-diagonal entry is larger than zero and every diagonal entry is larger than $-1$. We summarize these observations in the following lemma.

\begin{lem}
    When expressed in both Young's seminormal basis and Kazhdan--Lusztig basis, the matrices of  $1+s\in \mathrm {End}(S^\lambda)$ have non-negative entries.
\end{lem}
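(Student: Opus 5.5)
The plan is to check the two bases separately, directly from the explicit matrix formulas recalled earlier: Young's seminormal form for the first basis, and the dual $W$-graph action at $v=1$ for the second. In each case the matrix of $1+s_i$ is $I$ plus the matrix of $s_i$. So it suffices to show two things: every off-diagonal entry of the matrix of $s_i$ is non-negative, and every diagonal entry is at least $-1$.

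\emph{Seminormal basis.} We use the four cases of Young's seminormal form, reading off the column of $s_i$ indexed by $e_T$.
\begin{enumerate}
\item If $i,i+1$ lie in the same row, the column is $e_T$, so $1+s_i$ contributes $2$ on the diagonal.
\item If $i,i+1$ lie in the same column, the column is $-e_T$, so the diagonal entry of $1+s_i$ is $0$.
\item If $s_iT\in\mathrm{Std}(\lambda)$, then $i$ and $i+1$ are in different rows and columns, so $|a_i(T)|\ge 2$.
\begin{itemize}
\item The diagonal entries are $\pm a_i(T)^{-1}\in[-\tfrac12,\tfrac12]$, which is at least $-1$.
\item The off-diagonal entries are $1$ and $1-a_i(T)^{-2}\ge \tfrac34>0$.
\end{itemize}
\end{enumerate}
Hence every entry of $1+s_i$ is non-negative.

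The only point needing care is the bound $|a_i(T)|\ge 2$, which I would prove as follows. Write $d_k=c_k-r_k$ for the content of the box containing $k$. If $i$ and $i+1$ are not in the same row or column and both placements are standard, they are not adjacent boxes. Two non-adjacent boxes with contents differing by $1$ would violate standardness in the usual way. Since $d_{i+1}-d_i\ne0$ (different diagonals, as the boxes are not in the same row or column and both tableaux are standard), we get $|a_i|\ge2$.

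\emph{Kazhdan--Lusztig basis.} Here I would use the dual $W$-graph action at $v=1$ recalled in the example on dual representations. The paper fixed that the basis $\{b_x\}$ carries the transpose of the $W$-graph action, which is the dual representation. A transpose does not affect entrywise non-negativity, so it is enough to treat one of the two matrices.
\begin{itemize}
\item If $i\in I^c(x)$, then $s_i e_x=e_x$, so $1+s_i$ gives $2e_x$.
\item Otherwise $s_i e_x=-e_x+\sum\mu(x,y)e_y$, so $(1+s_i)e_x=\sum\mu(x,y)e_y$.
\end{itemize}
The remaining input is $\mu(x,y)\ge0$. This holds because $\mu$ is the coefficient of $v$ in $h_{y,x}$, which is non-negative by Kazhdan--Lusztig positivity (\cite{KazhdanLusztig80}, or \cite{EliasWilliamson14}). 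The left cell representation is a subquotient with these same $\mu$ values, so the same conclusion holds there.

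The main obstacle is bookkeeping rather than mathematics. One has to make sure the normalisation ($\delta_s$ versus $b_s=\delta_s+v$, and dual versus non-dual) really gives $1+s$ rather than $1-s$ at $v=1$. Concretely, I would confirm it through $\dot b_s=1+s$: the element $b_s$ acts on the $\{b_x\}$ basis with non-negative structure constants $\mu_{s,x}^z\ge0$, and specialising $v=1$ gives the claim directly.
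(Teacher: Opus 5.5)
Your proposal is correct and follows the same route as the paper. The paper observes that in the seminormal form the off-diagonal entries of $s_i$ are positive and its diagonal entries are at least $-1$, and simply recalls that $1+s$ acts non-negatively in the Kazhdan--Lusztig basis, which is non-negativity of the $\mu$'s by Kazhdan--Lusztig positivity. You supply the details the paper leaves implicit, and your vaguer justification of $|a_i(T)|\ge 2$ can be tightened: $i+1$ must lie strictly north-east or south-west of $i$, since north-west is excluded by standardness and south-east would put an entry strictly between $i$ and $i+1$ in the box $(r_i,c_{i+1})$, so $|a_i(T)|=|c_{i+1}-c_i|+|r_{i+1}-r_i|\ge 2$.
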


\begin{definition}
    Let $\mathcal{A}$ be a family of endomorphisms of $\mathbb{R}^d$. A set $C \subseteq \mathbb{R}^d$ is called a \textit{cone} if:
    \begin{enumerate}
        \item $C$ is closed in the standard Euclidean topology.
        \item $C$ is closed under taking non-negative linear combinations of its elements.
    \end{enumerate}
    
    Given such a cone $C$, we say that it is an \textit{invariant cone} for the family $\mathcal{A}$ if:
    \begin{equation*}
        A(C) \subseteq C \quad \text{for all } A \in \mathcal{A}.
    \end{equation*}
     A cone $C$ is \emph{pointed} if $C\cap(-C)=\{0\}$ and \emph{solid} (nondegenerate) if it has non-empty interior. A cone is called \emph{proper} if it is pointed, and solid. We say $C$ is polyhedral if $C$ is spanned by finitely many vectors, and we say $C$ is simplicial if it is spanned by exactly $d$ linearly independent vectors.
\end{definition}

The following theorem now follows immediately.

\begin{thm}
Let $\mathcal{A}=\{\,1+s \mid s\in S\,\}\subset \mathrm {End}(S^\lambda)$. Both Young's seminormal basis and the Kazhdan--Lusztig basis span proper $\mathcal{A}$-invariant simplicial cones in $S^\lambda$.
\end{thm}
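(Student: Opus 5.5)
The plan is to deduce this directly from the preceding lemma, which says that the matrices of $1+s$ have non-negative entries in both Young's seminormal basis and the Kazhdan--Lusztig basis. Fix either basis $\{e_1,\dots,e_d\}$ of $S^\lambda$, where $d=\dim S^\lambda$. Set
\[
C=\Bigl\{\sum_{i=1}^d c_i e_i \;\Bigm|\; c_i\ge 0\Bigr\}.
\]
We then check the defining properties of a proper invariant simplicial cone in turn.

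First, the cone properties. Identify $S^\lambda$ with $\mathbb{R}^d$ via the chosen basis. Under this identification $C$ becomes the non-negative orthant $\mathbb{R}^d_{\ge 0}$. This set is closed in the Euclidean topology, being a finite intersection of closed half-spaces $\{c_i\ge 0\}$. It is clearly closed under non-negative linear combinations. It is spanned by the $d$ linearly independent vectors $e_i$, so it is simplicial.

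Next, properness. If $v\in C\cap(-C)$, then every coordinate $c_i$ of $v$ satisfies both $c_i\ge0$ and $-c_i\ge 0$. Hence $v=0$, so $C$ is pointed. The point $\sum_i e_i$ has all coordinates positive, so a small ball around it lies in $C$. Thus $C$ has non-empty interior and is solid.

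Then invariance. Write $M_s=(m_{ij})$ for the matrix of $1+s$ in the chosen basis, so that $(1+s)e_j=\sum_i m_{ij}e_i$ with $m_{ij}\ge 0$ by the lemma. For $v=\sum_j c_je_j\in C$ we get
\[
(1+s)v=\sum_i\Bigl(\sum_j m_{ij}c_j\Bigr)e_i,
\]
whose coefficients are sums of products of non-negative numbers. Hence $(1+s)C\subseteq C$ for every $s\in S$.

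The only point needing care is the Kazhdan--Lusztig case, because of the convention from the section on dual representations. The $W$-graph describes the action on the basis $\tilde b_x$, while on $b_x$ it acts via transposes, that is, on the dual representation. Transposition preserves entrywise non-negativity. Moreover $S^\lambda\cong (S^\lambda)^*$ by the lemma on invariant forms. So whichever of the two matrix realizations is meant, we obtain an honest basis of $S^\lambda$ in which $1+s$ has non-negative matrix, and the argument above applies verbatim. I expect no real obstacle here: the statement is essentially a restatement of the lemma in the language of cones.
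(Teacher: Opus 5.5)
Your proposal is correct and follows the paper's route: the paper states this theorem as an immediate consequence of the preceding lemma on non-negativity of the matrices of $1+s$. Your argument spells out the details, namely that the non-negative orthant in either basis is closed, pointed, solid, simplicial, and preserved by entrywise non-negative matrices; your remark on the transpose/dual convention for the Kazhdan--Lusztig basis is a harmless extra check.
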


\begin{rem}
    Let $C \subseteq S^\lambda$ be a cone inside the Specht module for the partition $\lambda$, which is invariant under the operators $\mathcal{A}=\{ 1+s \mid s\in S\}$. Given an $S_n$-invariant form $(\cdot,\cdot)$ on $S^\lambda$, we consider the set
    \[
        C^{*}=\{u \in S^\lambda \mid (u,c)\geq 0 \text{ for all } c \in C\}.
    \]
    We call $C^{*}$ the \emph{dual cone} of $C$. To see that $C^*$ is also invariant under $\mathcal{A}$, let $s\in S$, $u \in C^*$, and let $c \in C$ be arbitrary. Using the self-adjointness of $1+s$ with respect to the invariant form, we have
    \[
        ((1+s)u, c) =(u,c)+(su,c)=(u,c)+(u,sc)= (u, (1+s)c).
    \]
    Since $C$ is invariant under $1+s$, we know $(1+s)c \in C$. By definition of the dual cone, $(u, (1+s)c) \geq 0$. Thus, $((1+s)u, c) \geq 0$ for all $c \in C$, which implies $(1+s)u \in C^*$. Furthermore, if $C$ is the cone generated by a basis $\mathcal{B}$, then $C^*$ is the cone generated by the dual basis $\mathcal{B}^*$ with respect to the form.
\end{rem}

A matrix family $\mathcal{A}$ is called irreducible if we cannot find a base-change such that all matrices in $\mathcal{A}$ have upper block-diagonal form. In \cite{MejstrikProtasov2025}, it is shown that if an irreducible matrix family that has a proper invariant cone has a unique maximal invariant cone.
Since the family of matrices $1+s$ is irreducible and leaves a non-trivial cone invariant, we can thus deduce the following.

\begin{thm}
    The operators $\{1+s \mid s\in S\}$ preserve a unique maximal invariant cone $C^{\max}$ and a unique minimal invariant cone $C^{\min}$ inside $S^\lambda$. Moreover, these cones satisfy the duality relation $(C^{\max})^*=C^{\min}$.
\end{thm}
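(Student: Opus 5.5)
The plan is to obtain existence and uniqueness of $C^{\max}$ from the result of \cite{MejstrikProtasov2025} quoted above, construct $C^{\min}$ as the dual cone of $C^{\max}$, and then check minimality directly by dualizing. Throughout, let $\mathcal{A}=\{1+s\mid s\in S\}$ and let $(\cdot,\cdot)$ be the positive definite $S_n$-invariant form on $S^\lambda$ from Theorem \ref{thm:polytabloid}.

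\emph{Step 1: hypotheses of \cite{MejstrikProtasov2025}.} We first check that $\mathcal{A}$ is irreducible. A subspace invariant under every $1+s$ is invariant under every $s$, hence under $S_n$. Since $S^\lambda$ is an irreducible $\mathbb{R}S_n$-module (it is defined over $\mathbb{Q}$ by Theorem \ref{thm:polytabloid}(3) and absolutely irreducible), no proper nonzero invariant subspace exists, so $\mathcal{A}$ admits no block-triangular form. By the preceding theorem, the Kazhdan--Lusztig basis spans a proper $\mathcal{A}$-invariant cone. The result of \cite{MejstrikProtasov2025} then gives a unique maximal proper $\mathcal{A}$-invariant cone $C^{\max}$, meaning every proper invariant cone is contained in it.

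\emph{Step 2: constructing $C^{\min}$.} Set $C^{\min}:=(C^{\max})^*$. By the preceding remark, $C^{\min}$ is $\mathcal{A}$-invariant. Because the form is positive definite, duality exchanges pointedness and solidity, so the dual of a proper cone is proper and $C^{**}=C$ for closed convex cones (bipolar theorem). Now let $C$ be any proper invariant cone. Its dual $C^*$ is also proper and invariant, so $C^*\subseteq C^{\max}$. Since duality is inclusion-reversing, $C^{\min}=(C^{\max})^*\subseteq C^{**}=C$. Hence $C^{\min}$ lies inside every proper invariant cone, which gives both its minimality and its uniqueness. The relation $(C^{\max})^*=C^{\min}$ holds by construction, and applying the bipolar theorem once more gives the symmetric relation $(C^{\min})^*=C^{\max}$.

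\emph{Main obstacle.} The only delicate point is the precise form of the cited theorem. One must confirm that ``maximal'' is meant among \emph{proper} cones; without pointedness the whole space $S^\lambda$ would be trivially invariant. One must also confirm that the cited maximality is a containment statement for all proper invariant cones, not merely maximality with respect to inclusion. If \cite{MejstrikProtasov2025} only provides the minimal cone, I would reverse the argument: take $C^{\min}$ from the citation and define $C^{\max}:=(C^{\min})^*$, using the same dualization.

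I would also verify directly that the minimal cone is nonzero and solid. For any $0\neq v\in C$, the cone generated by the orbit of $v$ under products of elements of $\mathcal{A}$ spans an $\mathcal{A}$-invariant, hence $S_n$-invariant, subspace. By irreducibility this subspace is all of $S^\lambda$, so the generated cone is solid.
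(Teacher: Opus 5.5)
Your overall route is the paper's. The paper simply invokes \cite{MejstrikProtasov2025}, using irreducibility of $\{1+s\}$ and the Kazhdan--Lusztig cone, and takes the duality from the self-adjointness remark. However, the reading of ``unique maximal'' on which your Step~2 rests is false, and so is the conclusion you draw from it. Because the operators are linear, $-C$ is a proper invariant cone whenever $C$ is. So no proper cone can contain every proper invariant cone: it would contain both $C$ and $-C$, hence $C-C=S^\lambda$, since $C$ is solid. Concretely:
\begin{itemize}
\item $-C^{\max}$ is a proper invariant cone that is not contained in $C^{\max}$;
\item your step $C^*\subseteq C^{\max}$ fails for $C=-C^{\min}$, whose dual is $-C^{\max}$;
\item your conclusion that $C^{\min}$ lies in every proper invariant cone would give $C^{\min}\subseteq -C^{\min}$, contradicting pointedness.
\end{itemize}
The ``obstacle'' you flagged therefore cannot be confirmed in the form you state. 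Uniqueness can only hold up to sign, and that is how the statement must be read. A sign has to be fixed before your dualization argument makes sense.

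The repair is short, and it even removes the need for the citation. Let $F$ be the averaging operator $\sum_{s\in S}(1+s)$, up to a positive scalar, as used in the paper.
\begin{itemize}
\item For any proper invariant cone $K$, $F$ is $K$-irreducible. If a face $\Phi$ of $K$ satisfies $F\Phi\subseteq\Phi$, then, since $Fx$ is a sum of the vectors $(1+s)x\in K$, each $1+s$ maps $\Phi$ into $\Phi$. So $\mathrm{span}\,\Phi$ is $S_n$-stable, and hence $\Phi\in\{0,K\}$.
\item By the Perron--Frobenius theorem for cones, the top eigenvalue of $F$ is simple, with eigenvector $u$ in the interior of $K$. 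Hence every proper invariant cone contains exactly one of $\pm u$, and $u$ depends only on $F$, up to sign.
\item The class of invariant cones containing $u$ is closed under $C\mapsto C^*$: if $u\in C$ and $-u\in C^*$, then $-(u,u)\ge 0$, which is absurd.
\item Set $C^{\min}:=\overline{\mathrm{cone}}\{\Pi u\}$, where $\Pi$ runs over all products of the operators $1+s$, including the empty product. It lies in every invariant cone containing $u$. It is pointed because it lies in one of them, and it is solid by your last paragraph.
\item Put $C^{\max}:=(C^{\min})^*$. Your dualization argument now works verbatim inside this class: $u\in C^*$ gives $C^{\min}\subseteq C^*$, hence $C\subseteq C^{\max}$.
\end{itemize}
Thus every proper invariant cone lies in $C^{\max}$ or in $-C^{\max}$, and contains $C^{\min}$ or $-C^{\min}$. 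This is the correct, up-to-sign meaning of the theorem, and $(C^{\max})^*=C^{\min}$ then follows from the bipolar theorem.
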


This naturally leads to the following question.

\begin{problem}
    What are the maximal and minimal $1+s,s\in S$ invariant cones inside the Specht module $S^\lambda$?
\end{problem}
In the following, we show that for hook-shape, two-column and $(n-2,2)$ partitions, the maximal invariant cone is indeed given by the Kazhdan--Lusztig basis.

\begin{example}
Consider the reflection representation, which corresponds to the Specht module for the partition $\lambda=(n,1)$. We consider the Kazhdan--Lusztig basis consisting of simple roots $\alpha_{i}$ and the dual basis consisting of fundamental weights $\varpi_{i}$, for $i=1,\dots,n$, with regard to the usual invariant form as in Example \ref{ex:simple_roots}. Let $F$ be the averaging operator over the operators $1+s$, i.e. $F=\frac{1}{n}\sum_i (1+s_i)$. Then $F$ has a unique maximal eigenvector $v$, which lies in the minimal cone. Now, we can compute for each $j\in \{1,\dots,n \}$
\[
\lim_{k\to\infty}\bigg(\prod_{j\neq i}(1+s_{i})\bigg)^{k}v,
\]
which converges to an eigenvector for the largest eigenvalue of $\prod_{j\neq i}(1+s_{i})$. This basis spans the minimal cone and is given by the fundamental weights $\varpi_i$. Note that one can show that in this case simply iteratively applying the operators $1+s$ does not yield the minimal cone but only an approximation of it.
\end{example}

\begin{rem}
    In general, deciding if a finite set of matrices possesses a proper invariant cone is an undecidable problem \cite{Protasov2010} and even if we know the existence of a minimal/maximal cone it remains a challenge to compute those cones even for small-dimensional matrices, see \cite{MejstrikProtasov2025}. 
\end{rem}

\begin{problem}
Can we compute the maximal $1+s$ invariant cone for $S^\lambda$ using only efficient linear algebraic methods?
\end{problem}


To establish the maximal $(1+s)$-invariant cone property for the Kazhdan--Lusztig left cell representation, we turn to the dual problem. We demonstrate that the dual representation yields the minimal invariant cone. Hence, when writing our matrices in this dual basis, we must prove that the positive orthant $\mathbb{R}_{\geq0}^{d}$ acts as the minimal invariant cone. To do this, we consider a vector $v \in C^{\min}$. We choose $v$ to be the Perron-Frobenius vector of the averaging operator
$$F = \frac{1}{n}\sum_i (1+s_i).$$
This vector lies inside the interior of the minimal cone. Thus, $v$ is strictly positive entrywise in our given basis. Finally, we recall that each basis element $w_{T}$ of the dual Kazhdan--Lusztig basis corresponds to a standard Young tableau $T$ of shape $\lambda$. Moreover, we have that
\[
(1+s)^tw_{T}=2w_{T},
\]
if and only if $s\in D^{c}(T)$. We need the following crucial fact.
\begin{lem}\label{lemma:eigenvectors}
For $T\in\mathrm{Std}(\lambda$), consider $$A^t_{T}=A^t_{s_{i_{1}}}\cdots A^t_{s_{i_{k}}}$$ with $D^{c}(T)=\{i_{1},\dots,i_{k}\}$. The maximal eigenvalue of $A^t_{T}$ is $2^{k}$ and the eigenvectors live in the span
\[
\langle w_{T'}\mid D^{c}(T)\subseteq D^{c}(T')\rangle.
\]
Moreover, for any vector $v$ with positive entries we have that
\[
\frac{A^{n}v}{|A^{n}v|}\to\tilde{v}\in\langle w_{T'}\mid D^{c}(T)\subseteq D^{c}(T')\rangle.
\]
\end{lem}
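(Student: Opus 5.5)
The plan is to reduce everything to a statement about products of orthogonal projections. The basic observation is that each $A^t_{s}=(1+s)^t$, acting in the dual Kazhdan--Lusztig basis $\{w_T\}$, is twice an orthogonal projection for the invariant form.

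\emph{Step 1: each $A^t_{s_i}$ is $2P_i$.} Since $s^2=1$, we have $(1+s)^2=2(1+s)$, so $P_i:=\tfrac12(1+s_i)$ is idempotent. Its image is read off from the dual $W$-graph action recalled above. If $i\in D^c(T')$, then $(1+s_i)w_{T'}=2w_{T'}$. If $i\notin D^c(T')$, then $(1+s_i)w_{T'}$ is a combination of those $w_{T''}$ with $i\in D^c(T'')$. Hence
\[
\im P_i=V_i:=\langle w_{T'}\mid i\in D^c(T')\rangle ,
\]
and $P_i$ is the identity on $V_i$. By the self-adjointness of $1+s$ used in the remark on dual cones, $P_i$ is self-adjoint. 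So $P_i$ is the orthogonal projection onto $V_i$ with respect to the positive definite invariant form. Consequently
\[
A^t_T=2^k P_{i_1}\cdots P_{i_k}.
\]

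\emph{Step 2: the spectral statement.} Each $P_{i_j}$ has operator norm at most $1$ for the invariant inner product, so every eigenvalue of $A^t_T$ has modulus at most $2^k$. Now suppose $P_{i_1}\cdots P_{i_k}u=\lambda u$ with $|\lambda|=1$ and $u\neq 0$. Peeling off the projections one at a time gives
\[
|u|=|P_{i_1}\cdots P_{i_k}u|\le |P_{i_2}\cdots P_{i_k}u|\le\cdots\le|P_{i_k}u|\le |u|,
\]
so equality holds throughout. Equality $|P u|=|u|$ for an orthogonal projection forces $Pu=u$, and working inward this gives $u\in\bigcap_j V_{i_j}$. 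Since the $V_{i_j}$ are coordinate subspaces in the basis $\{w_{T'}\}$, their intersection is
\[
\langle w_{T'}\mid D^c(T)\subseteq D^c(T')\rangle .
\]
This span is nonzero because it contains $w_T$, and $A^t_T w_T=2^k w_T$. So the maximal eigenvalue is exactly $2^k$, and all its eigenvectors lie in the stated span.

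\emph{Step 3: convergence of the power iteration.} Here I would invoke the von Neumann--Halperin theorem on alternating projections: $(P_{i_1}\cdots P_{i_k})^n$ converges to the orthogonal projection $P_\cap$ onto $\bigcap_j V_{i_j}$. Hence $2^{-kn}A^nv\to P_\cap v$, where $A=A^t_T$. In finite dimensions this also follows from Step 2, once one checks that the peripheral part of $A^t_T$ is semisimple, which holds because it acts as the identity on the intersection.

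It remains to show $P_\cap v\neq 0$, and this is the one point that needs care: positivity of $v$ in coordinates does not directly give positivity of the invariant-form pairings. Instead I would use entrywise non-negativity of $A$. All matrices $A^t_{s}$ are non-negative, and $A w_T=2^kw_T$, so
\[
A^n v\ \ge\ v_T\,2^{kn}w_T \quad\text{entrywise}.
\]
Therefore the $w_T$-coordinate of $P_\cap v$ is at least $v_T>0$. Normalizing then gives
\[
\frac{A^n v}{|A^n v|}\to \frac{P_\cap v}{|P_\cap v|}\in\langle w_{T'}\mid D^c(T)\subseteq D^c(T')\rangle .
\]

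I expect the main technical points to be the identification of $\im(1+s_i)$ from the dual $W$-graph formulas, and keeping the convention straight that the transposed matrices are self-adjoint for the invariant form rather than the standard one.
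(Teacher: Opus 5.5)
Your proof is correct and is essentially the paper's argument: writing $(1+s_{i_j})^t=2P_{i_j}$ repackages the paper's bound $\lVert 1+s_i^t\rVert\le 2$ in the invariant norm, with equality exactly on $s_i$-fixed vectors, and peeling off the factors forces any eigenvector for an eigenvalue of modulus $2^k$ into $\bigcap_j V_{i_j}=\langle w_{T'}\mid D^c(T)\subseteq D^c(T')\rangle$. Your Step 3 supplies the ``standard argument'' that the paper omits, and the entrywise bound $A^nv\ge v_T2^{kn}w_T$ is exactly what shows the limit is nonzero; if you use the Jordan-form alternative rather than von Neumann--Halperin, note that $\lVert P_{i_1}\cdots P_{i_k}\rVert\le 1$ (power-boundedness) is what rules out a Jordan block at eigenvalue $1$, not merely the fact that the product acts as the identity on the intersection.
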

\begin{proof}
    Clearly all $w_{T'}$ with $D^{c}(T)\subseteq D^{c}(T')$ satisfy $A^t_{T}w_{T'}=2^{k}w_{T'}.$  It suffices to show that these span exactly the eigenspace corresponding to the largest eigenvalue. Consider the operator $A^t_{s_i} = 1 +  s_i^t$. By the triangle inequality, the operator norm satisfies $\|A^t_{s_i}\| = \|1 + s_i^t\| \leq \|1\| + \| s_i^t\| = 2.$
    Equality holds, i.e., $\|A^t_{s_i}v\| = 2\|v\|$, if and only if $ s_i^t v = v$. Now consider $A^t_T = A^t_{s_{i_1}} \cdots A^t_{s_{i_k}}$. Since the operator norm is sub-multiplicative it follows that $$2^k \leq \|A^t_T\| \le \prod_{j=1}^k \|A^t_{s_{i_j}}\| \le 2^k.$$
    Thus, the maximal eigenvalue of $A^t_T$ is given by $2^k$. We now characterize the eigenspace for the value $2^k$. Let $v$ be a vector such that $A^t_T v = 2^k v$. Taking norms:
    \[
    \|A^t_{s_{i_1}} \cdots A^t_{s_{i_k}} v\| = 2^k \|v\|.
    \]
    Since the norm of each factor is at most 2, the equality holds if and only if each operator $A^t_{s_{i_j}}$ acts on the vector $v$ as multiplication by 2, which by definition occurs if and only if $i \in D^c(T')$.  With this the limit property follows using a standard argument.
\end{proof}
If $T$ is already determined by the set $D^{c}(T)=\{1,\dots,n-1\}\setminus D(T)$ and there is no $T'$ with $D^c(T)\subseteq D^c(T')$, Lemma \ref{lemma:eigenvectors} implies that the operator
\[
A^t_{T}=\prod_{s\in D^{c}(T)}(1+s)^t
\]
has $w_{T}$ as unique maximal eigenvector (up to scalar).
In the case of hook-shape partitions, each tableau is uniquely determined by its descent and complement of the descent set and the underlying $W$-graph can be described explicitly, see \cite[Proposition 6.6.1]{BjornerBrenti2005}. Thus, the preceding discussion leads to the following result.

\begin{thm}
Let $\lambda$ be a hook shape partition. Then the Kazhdan--Lusztig left cell representation spans the maximal invariant cone inside the Specht module $S^\lambda$.
\end{thm}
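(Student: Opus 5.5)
We argue in the dual picture, as set up before Lemma \ref{lemma:eigenvectors}. Since $(C^{\max})^*=C^{\min}$ and the cone spanned by a basis has as dual the cone spanned by the dual basis, it suffices to show the following. In the dual Kazhdan--Lusztig basis $\{w_T\mid T\in\mathrm{Std}(\lambda)\}$, where the operators act by $(1+s)^t$, the nonnegative orthant $\mathbb{R}^d_{\geq0}$ is the minimal invariant cone. This orthant is invariant, because the matrices of $1+s$ in the Kazhdan--Lusztig basis are nonnegative. So $C^{\min}\subseteq\mathbb{R}^d_{\geq 0}$ by uniqueness and minimality. The task reduces to showing that every $w_T$ lies in $C^{\min}$; then $C^{\min}$ contains the generators of the orthant and the two cones coincide.

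\emph{Step 1: an interior vector of $C^{\min}$.} Take $v$ to be the Perron--Frobenius vector of $F=\frac{1}{n}\sum_i(1+s_i)^t$. Since $C^{\min}$ is a proper invariant cone, it is also $F$-invariant. By the Krein--Rutman/Perron--Frobenius theorem for cones, $F$ has an eigenvector for its spectral radius inside $C^{\min}$. Irreducibility of the family makes $F$ an irreducible nonnegative matrix in the $w_T$ basis, because the $W$-graph $\Gamma^\lambda$ is connected. Hence this eigenvector is unique up to scalar and has strictly positive entries. So $v\in C^{\min}$ and $v>0$ entrywise.

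\emph{Step 2: extracting each $w_T$.} Fix $T$ and put $A^t_T=\prod_{s\in D^c(T)}(1+s)^t$. Because $C^{\min}$ is closed and invariant, every normalized iterate $A_T^{m}v/|A_T^m v|$ lies in $C^{\min}$, and so does its limit. By Lemma \ref{lemma:eigenvectors}, this limit is a nonzero nonnegative vector in $\langle w_{T'}\mid D^c(T)\subseteq D^c(T')\rangle$. It is nonzero because $v>0$ has a positive component along the top eigenspace, which is spanned by coordinate vectors. The remaining point is that this span is just $\mathbb{R}w_T$, i.e. no other $T'$ satisfies $D^c(T)\subseteq D^c(T')$.

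\emph{Step 3: the hook combinatorics, which is the main step.} For $\lambda=(n-k,1^k)$, a standard tableau $T$ is determined by the set of entries $j\ge2$ in the first column. Here $i\in D(T)$ if and only if $i+1$ lies in the first column below row $1$. So $D(T)$ is a $k$-subset of $\{1,\dots,n-1\}$, and every $k$-subset occurs exactly once. Hence $|D^c(T')|=n-1-k$ for all $T'$. An inclusion $D^c(T)\subseteq D^c(T')$ of sets of equal size forces equality, and the bijection then forces $T'=T$. Thus the limit in Step 2 is a positive multiple of $w_T$, and $w_T\in C^{\min}$ for all $T$.

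Combining the steps gives $C^{\min}=\mathbb{R}^d_{\geq0}$ in the dual basis. Dualizing with the invariant form then gives that $C^{\max}$ is the cone spanned by the Kazhdan--Lusztig basis. The hardest steps to state carefully are the following. First, the identification of the action on the dual basis: $(1+s)^t w_T=2w_T$ exactly when $s\in D^c(T)$, which is the $W$-graph convention recalled earlier. Second, the claim that the positive vector $v$ has nonzero projection onto the top eigenspace, so that the power iteration in Lemma \ref{lemma:eigenvectors} converges to a nonzero vector.
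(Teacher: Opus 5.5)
Your proof is correct and is essentially the paper's argument: you dualize via $(C^{\max})^*=C^{\min}$, start from the Perron--Frobenius vector of the averaging operator, apply Lemma~\ref{lemma:eigenvectors}, and use that for hooks $T\mapsto D(T)$ is a bijection onto the subsets of $\{1,\dots,n-1\}$ of a fixed size, so your equal-cardinality observation makes explicit why no $T'\neq T$ satisfies $D^c(T)\subseteq D^c(T')$, a point the paper leaves implicit. Two small tightenings: irreducibility of $F$ as a nonnegative matrix should come from irreducibility of $S^\lambda$ rather than from undirected connectivity of $\Gamma^\lambda$ (a coordinate subspace invariant under $F$ is invariant under every $(1+s)^t$, since $F$ is a sum of nonnegative matrices), and the nonvanishing of the limit is cleanest as $(A^t_T)^m v/2^{m|D^c(T)|}\ge(\min_i v_i)\,w_T$ entrywise.
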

In general, we cannot use this strategy to show that the given cone is minimal, as the operators $A^t_{T}$ can have non-simple Perron-Frobenius eigenspace.

\begin{example}\label{ex:3_2}
Consider the partition $\lambda=(3,2)$ and the Specht module with Kazhdan--Lusztig left cell representation and associated basis elements labelled by the standard tableaux ordered in the last letter order. We can compute their descents and the complements of their descents, see Table \ref{tab:correspondence}. (The remaining entries in this table, such as the cup diagrams, are explained below.)

\begin{table}
\centering
\ytableausetup{centertableaux,notabloids,smalltableaux} 
\begin{tabular}{c|ccccc}
 & $T_{2,4}$ & $T_{3,4}$ & $T_{2,5}$ & $T_{3,5}$ & $T_{4,5}$ \\ \hline
\rule{0pt}{1.5em} 
Standard Tableaux &
\begin{ytableau}
1 & 3 & 5 \\
2 & 4
\end{ytableau} &
\begin{ytableau}
1 & 2 & 5 \\
3 & 4
\end{ytableau} &
\begin{ytableau}
1 & 3 & 4 \\
2 & 5
\end{ytableau} &
\begin{ytableau}
1 & 2 & 4 \\
3 & 5
\end{ytableau} &
\begin{ytableau}
1 & 2 & 3 \\
4 & 5
\end{ytableau}
\\[1.5em] 

Cup Diagrams &
\begin{tikzpicture}[scale=0.3, baseline=5pt, thick]
    \foreach \x in {1,...,5} \filldraw (\x,0) circle (2pt);
    \draw (1,0) arc (180:0:0.5);
    \draw (3,0) arc (180:0:0.5);
    \draw (5,0) -- (5,1.5);
\end{tikzpicture} &
\begin{tikzpicture}[scale=0.3, baseline=5pt, thick]
    \foreach \x in {1,...,5} \filldraw (\x,0) circle (2pt);
    \draw (2,0) arc (180:0:0.5);
    \draw (1,0) arc (180:0:1.5);
    \draw (5,0) -- (5,1.5);
\end{tikzpicture} &
\begin{tikzpicture}[scale=0.3, baseline=5pt, thick]
    \foreach \x in {1,...,5} \filldraw (\x,0) circle (2pt);
    \draw (1,0) arc (180:0:0.5);
    \draw (4,0) arc (180:0:0.5);
    \draw (3,0) -- (3,1.5);
\end{tikzpicture} &
\begin{tikzpicture}[scale=0.3, baseline=5pt, thick]
    \foreach \x in {1,...,5} \filldraw (\x,0) circle (2pt);
    \draw (2,0) arc (180:0:0.5);
    \draw (4,0) arc (180:0:0.5);
    \draw (1,0) -- (1,1.5);
\end{tikzpicture} &
\begin{tikzpicture}[scale=0.3, baseline=5pt, thick]
    \foreach \x in {1,...,5} \filldraw (\x,0) circle (2pt);
    \draw (3,0) arc (180:0:0.5);
    \draw (2,0) arc (180:0:1.5);
    \draw (1,0) -- (1,1.5);
\end{tikzpicture}
\\[1.5em] 

$D(T)$ & $\{1,3\}$ & $\{2\}$ & $\{1,4\}$ & $\{2,4\}$ & $\{3\}$ \\[1.5em]

$D(T)^{c}$ & $\{2,4\}$ & $\{1,3,4\}$ & $\{2,3\}$ & $\{1,3\}$ & $\{1,2,4\}$
\end{tabular}
\caption{Standard tableaux, cup diagrams, and descent sets and their complements for the partition $\lambda=(3,2)$. See below for an explanation of the cup diagrams.}
\label{tab:correspondence}
\end{table}

Starting with a vector $v$ in the interior of the minimal cone, we can obtain all $w_T$ with $T$ maximal with regard to the poset defined by the inclusion of the sets $D^c(T)$. Thus, we obtain the basis vectors for $T_{3,4},T_{2,5},T_{4,5}$. In the following, we show how, starting with the basis vector of $T_{3,4}$, we can obtain all other basis vectors. Using the $W$-graph $\Gamma^{(3,2)}$ in Figure \ref{fig:3_2}, we can compute $A^t_{s_2}w_{T_{3,4}}=w_{T_{2,4}}$, so we obtain the basis vector for $T_{2,4}$. Next, we check that $A^t_{s_3}w_{T_{2,4}}=w_{T_{3,4}}+w_{T_{2,5}}=v_1$.  Since $w_{T_{2,5}}$ is the only largest eigenvector (up to scalar) of the matrix $A=A^t_{T_{2,5}}=A^t_{s_2}A^t_{s_3}$,  we can check that $A^nv/|A^nv|\to w_{T_{2,5}}$. We further compute $A^t_{s_1}w_{T_{2,5}}=w_{T_{3,5}}$ and obtain $w_{T_{4,5}}$ similarly to $w_{T_{2,5}}$.

In Figure \ref{fig:5_2}, we have visualised a similar process in the case  $\lambda=(5,2)$, which contains $(3,2)$ as a subcase. Note that in this case $T_{2,5}$ no longer is a maximal tableau with regard to inclusion of the sets $D^c(\cdot)$.
\begin{figure}
    \centering
    \resizebox{.5\linewidth}{!}{\begin{tikzpicture}[
    vertex/.style={circle, draw, font=\sffamily\huge\bfseries, minimum size=16mm, inner sep=0pt},
    source/.style={vertex, fill=red, text=white},         
    sink/.style={vertex, fill=green!60!black, text=white}, 
    elabel/.style={blue, font=\sffamily\huge\bfseries, minimum size=8mm, sloped, allow upside down=false},
]

\def\R{7}                
\def\RedList{1,4,8,9,13} 
\def\TotalNodes{14}      

\foreach \i/\j [count=\n from 0] in {
    2/4, 
    3/4, 
    2/5, 
    3/5, 
    4/5, 
    2/6, 
    3/6, 
    4/6, 
    5/6, 
    2/7, 
    3/7, 
    4/7, 
    5/7, 
    6/7  
} {
    \pgfmathsetmacro{\ang}{90 + \n * (360/\TotalNodes)}

    \def\CurrentStyle{sink} 
    \foreach \r in \RedList {
        \ifnum\n=\r
            \xdef\CurrentStyle{source} 
            \breakforeach
        \fi
    }

    \node[\CurrentStyle] (\n) at (\ang:\R) {$T_{\i,\j}$};
}
\draw[-{Stealth[length=5mm]}] (1) to node[elabel,above] {$s_2$} (0);
\draw[-{Stealth[length=5mm]}] (0) to node[elabel,below] {$s_3$} (2);
\draw[-{Stealth[length=5mm]}] (2) to node[elabel,below] {$s_1$} (3);
\draw[-{Stealth[length=5mm]}] (3) to node[elabel,below] {$s_2$} (4);

\draw[-{Stealth[length=5mm]}] (2) to node[elabel,above] {$s_4$} (5);
\draw[-{Stealth[length=5mm]}] (5) to node[elabel,below] {$s_1$} (6);
\draw[-{Stealth[length=5mm]}] (6) to node[elabel,below] {$s_2$} (7);
\draw[-{Stealth[length=5mm]}] (7) to node[elabel,below] {$s_3$} (8);

\draw[-{Stealth[length=5mm]}] (5) to node[elabel,above] {$s_5$} (9);
\draw[-{Stealth[length=5mm]}] (9) to node[elabel,below] {$s_1$} (10);
\draw[-{Stealth[length=5mm]}] (10) to node[elabel,below] {$s_2$} (11);
\draw[-{Stealth[length=5mm]}] (11) to node[elabel,below] {$s_3$} (12);
\draw[-{Stealth[length=5mm]}] (12) to node[elabel,below] {$s_4$} (13);

\end{tikzpicture}}
    \caption{Strategy for showing that the dual Kazhdan--Lusztig left cell basis spans the minimal $1+s,s\in S$ invariant cone inside $S^\lambda$, for $\lambda=(5,2)$. Tableaux that are maximal with regard to the complement of their descent set are marked in red.}
    \label{fig:5_2}
\end{figure}
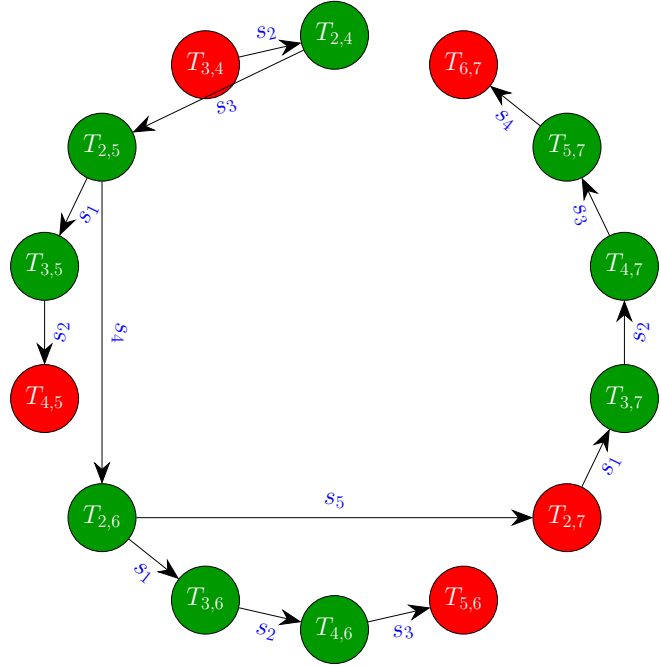
\end{example}

The example suggests the following strategy for verifying the minimal invariant cone property for the dual Kazhdan--Lusztig left cell representation with basis elements of the form $w_T$:
\begin{enumerate}
\item Obtain $w_{T}$ for $T$ with $D^{c}(T)$ maximal, i.e. for all $T'\neq T,$ we have $D^{c}(T)\not\subseteq D^{c}(T')$.
\item For $w_T$ known and $s\in S$
\begin{enumerate}
    \item If $(1+s)^t w_T=w_{T'}$, we have shown $w_{T'}$ lies in the minimal cone.
    \item If $(1+s)^tw_{T}=v$ converges to $w_{T'}$ by applying $A^t_{T'}$ iteratively, i.e.
\end{enumerate}
\[
\frac{A^t_{T'}v}{\lvert A^t_{T'}v\rvert}v\to w_{T'},
\]we have shown $w_{T'}$ lies in the minimal cone.
\item Iterate step (2) until all $w_{T'}$ are obtained.
\end{enumerate}

For two column partitions, this strategy becomes particularly applicable.

\begin{lem}\label{lemma:two_column}
    Let $\lambda$ be a two-column partition and let $A^t_s$ denote the matrix of $(1+s)^t$, for $s\in S$, in the Kazhdan--Lusztig basis. Then, each column of $A^t_s$ has at most one non-zero entry.
\end{lem}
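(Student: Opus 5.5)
The plan is to read off the columns of $A^t_s$ from the $W$-graph formula for the dual action, and then identify that action for two-column shapes with the Temperley--Lieb action on cup diagrams. Write $s=s_i$. By the formula for the dual representation of a $W$-graph at $v=1$, the column of $A^t_{s_i}=(1+s_i)^t$ indexed by $e_x$ is determined by two cases:
\begin{itemize}
\item If $i\in I^c(x)$, then $(1+s_i)e_x=2e_x$, so the column has the single non-zero entry $2$ on the diagonal.
\item If $i\notin I^c(x)$, then $(1+s_i)e_x=\sum_{y}\mu(x,y)e_y$, where $y$ runs over the $W$-graph neighbours of $x$ with $i\in I^c(y)$.
\end{itemize}
So the lemma reduces to a purely combinatorial claim about $\Gamma^\lambda$. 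For every vertex $x$ with $i\notin I^c(x)$, there is at most one neighbour $y$ with $\mu(x,y)\neq0$ and $i\in I^c(y)$.

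Next I reduce to two-row shapes. Tensoring a left cell with the sign representation corresponds to $\lambda\mapsto\lambda'$; concretely, multiplying by $w_0$ on the right exchanges the relevant cells. On $W$-graphs this keeps the same vertices and edge weights and replaces each label $I(x)$ by its complement $I^c(x)$. Hence for a two-column $\lambda$, the set $I^c(x)$ is the descent set $I(\bar x)$ of the corresponding vertex $\bar x$ of the two-row $W$-graph $\Gamma^{\lambda'}$. The claim then becomes the following. In $\Gamma^{\lambda'}$, a vertex $\bar x$ with $s_i\notin I(\bar x)$ has at most one neighbour $\bar y$ with $s_i\in I(\bar y)$.

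For two-row shapes I would use the explicit description of Lascoux--Sch\"utzenberger and Westbury, recalled via the cup diagrams in Table \ref{tab:correspondence}. Vertices are cup diagrams with $\lambda'_2$ cups and rays, and $s_i\in I(\bar x)$ exactly when $i,i+1$ are joined by a cup. All $\mu$-values are $1$. The operator $\sum_{\bar y}\mu(\bar y,\bar x)e_{\bar y}$ is the Temperley--Lieb generator $U_i$ acting on diagrams, with the Specht-module convention that joining two rays gives $0$. Since $U_i$ applied to a diagram without a cup at $(i,i+1)$ produces exactly one diagram (or zero), and that diagram has a cup at $(i,i+1)$, the required neighbour is unique whenever it exists.

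The main obstacle is this middle identification. One has to check that the neighbours of $\bar x$ with $s_i$ in their descent set are precisely the result of the local TL move, and not additional diagrams related by longer-range $\mu$-edges. I would settle this by citing the known fact that two-row KL cells have $\mu\in\{0,1\}$ with edges given exactly by single TL moves; failing that, I would verify it directly by a case analysis of the positions $i,i+1$ as a ray or cup endpoint, checking each case against the descent sets in the last letter order.
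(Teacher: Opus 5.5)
Your proposal is correct and follows essentially the same route as the paper. Both arguments read the columns of $A^t_s$ off the dual $W$-graph action, then pass to the conjugate two-row shape via the Kazhdan--Lusztig complementation (same edges, $I(x)\mapsto I^c(x)$), and finally invoke Westbury's diagrammatic fact that a vertex without $s$ in its descent set has at most one neighbour with $s$ in its descent set. Where the paper simply cites \cite{Westbury95}, you also explain why that fact holds (the Temperley--Lieb generator $U_i$ sends a cup diagram to a single diagram or to zero), and you treat the diagonal columns $2e_x$ explicitly.
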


\begin{proof}
    Let $\mu=\lambda'$ be the conjugate partition associated to $\lambda$. Hence, $\mu$ is a two-row partition. Let $\Gamma^\mu$, with vertex labelling $I$, denote the $W$-graph of the Kazhdan--Lusztig left cell representation corresponding to $\mu$. In \cite{Westbury95}, it is shown using diagrammatic calculus that each vertex $x$ with $s\notin I(x)$ in the underlying $W$-graph is connected to at most one vertex $y$ with $s\in I(y)$. Moreover, in \cite{KazhdanLusztig79}, it is shown that the $W$-graph $\Gamma^\lambda$ for $\lambda$ is obtained from $\Gamma^\mu$ by retaining the edge set and taking the complements of the descent sets, i.e., setting $I'(x)=I^c(x)$.
    Now, let $x, y$ be arbitrary vertices of $\Gamma^\lambda$, and let $A^t_s$ be the matrix of $(1+s)^t$. Observe that $y$ appears with a non-zero coefficient in $A^t_s x$ if and only if $x$ appears with a non-zero coefficient in $A_s y$, which occurs if and only if $x$ and $y$ are connected in the $W$-graph $\Gamma^\lambda$. The claim then follows immediately from the connectivity property of $\Gamma^\mu$ described above.
\end{proof}

\begin{thm}\label{thm:two_column_cone}
    Let $\lambda$ be a two-column partition. The Kazhdan--Lusztig basis of $S^\lambda$ spans the maximal invariant cone inside $S^\lambda$.
\end{thm}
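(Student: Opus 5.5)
By the duality $(C^{\max})^*=C^{\min}$, it suffices to show that the dual Kazhdan--Lusztig basis $\{w_T \mid T\in\mathrm{Std}(\lambda)\}$ spans the minimal invariant cone. Equivalently, we show that every $w_T$ lies in $C^{\min}$. Since the cone spanned by the $w_T$ is invariant, it contains $C^{\min}$; once each $w_T\in C^{\min}$ the two cones coincide. I would follow the strategy outlined after Example~\ref{ex:3_2}.

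\textbf{Step 1: the maximal tableaux.} Take $v$ to be the Perron--Frobenius vector of $F=\frac1n\sum_i(1+s_i)$. It lies in the interior of $C^{\min}$, so it has strictly positive coordinates in the basis $\{w_T\}$. For $T$ with $D^c(T)$ maximal under inclusion, Lemma~\ref{lemma:eigenvectors} says the normalized iterates of $A^t_T$ applied to $v$ converge into
\[
\langle w_{T'} \mid D^c(T)\subseteq D^c(T')\rangle .
\]
If the maximality is strict, this span is the line through $w_T$. Since $C^{\min}$ is closed and invariant, $w_T\in C^{\min}$. For two-column shapes, $D^c(T)$ is the set of $i$ with $i+1$ not strictly south of $i$, i.e.\ $i$ in the second column or $i+1$ starting a new row pattern. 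One should check that distinct tableaux have incomparable-or-distinct $D^c$ in the relevant cases, or else handle ties in Step 2.

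\textbf{Step 2: propagation.} This is where Lemma~\ref{lemma:two_column} is essential. Each column of $A^t_s$ has at most one nonzero entry, so for every $T$ and $s$ the vector $(1+s)^t w_T$ is a nonnegative multiple of a single basis vector $w_{T'}$ (or zero). More precisely: if $s\in D^c(T)$, we get $2w_T$; otherwise $(1+s)^t w_T=\mu\, w_{T'}$ with $\mu>0$ for the unique neighbour $T'$ of $T$ with $s\in D^c(T')$, which exists whenever the column is nonzero. Hence $C^{\min}\cap\{\text{basis rays}\}$ is closed under moving along edges of the $W$-graph $\Gamma^\lambda$, oriented from $T$ to $T'$ with $s\notin D^c(T)$ and $s\in D^c(T')$. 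No limit arguments are needed here, only step~(2a) of the strategy.

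\textbf{Step 3: reachability.} It remains to show that every vertex of $\Gamma^\lambda$ is reachable by such directed moves from some maximal tableau found in Step 1. Irreducibility of $S^\lambda$ gives this abstractly. The set $R$ of reached basis vectors spans a subspace, and $R$ is closed under all $(1+s)^t$: any $w_T\in R$ is mapped to a multiple of an element of $R$ by the column property, with the diagonal case mapping to itself. So $\mathrm{span}(R)$ is an $S_n$-submodule of the dual module. It is nonzero because Step 1 produces at least one vector, so by irreducibility $\mathrm{span}(R)$ is everything. Since $R$ is a subset of a basis, $R$ must be the whole basis.

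\textbf{Main obstacle.} The delicate point is Step 1: some maximal $D^c(T)$ might be shared by several tableaux, making the Perron eigenspace non-simple. I would avoid this by needing only one vector. It suffices to pick one tableau whose $D^c$-set is contained in no other's. For a two-column $\lambda$, the column-reading tableau (the first column $1,\dots,k$, then the rest) has descent set $\{1,\dots,k-1\}$. I would verify directly that its complement is not contained in $D^c(T')$ for any other $T'$, which yields one $w_T\in C^{\min}$. Step 3 then finishes the argument.
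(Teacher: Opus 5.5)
Your overall structure matches the paper's: obtain one $w_T$ in $C^{\min}$ via Lemma \ref{lemma:eigenvectors}, then propagate by step (2)(a) using Lemma \ref{lemma:two_column}. However, the step you offer as the way around the main obstacle fails as written.

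For a two-column shape, $i\in D^c(T)$ exactly when $i$ lies in the first column and $i+1$ in the second. If $i$ is in the second column, then $i+1$ is strictly south, so your description of $D^c$ is wrong. In the column-reading tableau both columns are filled downwards. With first column of length $k$, this gives $D(T)=\{1,\dots,n-1\}\setminus\{k\}$ and $D^c(T)=\{k\}$. Your $\{1,\dots,k-1\}$ is correct only when the second column has a single box.

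This singleton is usually not maximal. For $\lambda=(2,2,1)$ the column-reading tableau $T$ has $D^c(T)=\{3\}$, while the tableau $T'$ with rows $(1,2),(3,4),(5)$ has $D^c(T')=\{1,3\}$. So the top eigenspace of $(1+s_3)^t$ is $\langle w_T,w_{T'}\rangle$. Power iteration then returns a vector with positive coefficients on both, and you never isolate a single basis vector to start Step 3. The same happens for $(2,2,2)$.

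The repair is easy. Let $a$ be the length of the second column, and take the row-reading tableau $T_0$ whose second column is $\{2,4,\dots,2a\}$. Then $D^c(T_0)=\{1,3,\dots,2a-1\}$. Since $i\mapsto i+1$ injects $D^c(T)$ into the second column, $|D^c(T)|\le a$ for every $T$. Hence $D^c(T_0)\subseteq D^c(T')$ forces equality. That in turn forces the second column of $T'$ to be $\{2,4,\dots,2a\}$, i.e.\ $T'=T_0$.

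With this choice your argument goes through, and it makes explicit two points the paper leaves unjustified. The first is the existence of a strictly maximal tableau, which the paper simply asserts. The second is directed reachability, where your Step 3 is the right justification. The span of the reached $w_T$ is stable under all $(1+s)^t$, hence is a nonzero submodule, hence everything. The paper's appeal to connectivity of $\Gamma^\lambda$ alone does not ensure that each edge can be traversed in the direction a $(1+s)^t$-move actually goes.
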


\begin{proof}
    Lemma \ref{lemma:eigenvectors} together with step (2)(a) in our strategy imply that the vectors $v$ in our strategy are already of the form $w_{T'}$. By the connectivity of the underlying $W$-graph $\Gamma^\lambda$ of the Kazhdan--Lusztig  left cell representation associated to $\lambda$, we can find for each $T,T'$ reflections $s_1,\dots,s_k$ (not necessarily distinct) with $w_{T'}=A^t_{s_k}\cdots A^t_{s_1}w_T$, i.e.\ iteratively applying step (2)(a) in our strategy. Since there always exists at least one maximal tableau $T$ with regard to step (1) in our strategy, we obtain all elements.
\end{proof}

\begin{rem}
    From Theorem \ref{thm:two_column_cone} we cannot deduce the same property for two-row partitions automatically, and we need to apply the more general case (2)(b) of our strategy. In fact, in general the maximal condition can hold for a partition but fail for its conjugate partition, see Example \ref{ex:cone_counterexample}.
\end{rem}

We demonstrate our general strategy for the case $\lambda=(n-2,2)$ with $n>2$, viewed as a special case of two-row partitions. We briefly recall the combinatorics of obtaining the Kazhdan--Lusztig left cell representation in the $(n-2,2)$ case. For the general case of Kazhdan--Lusztig left cell representations associated to two-row and two-column partitions, several explicit constructions are available. We refer to \cite{LascouxSchuetzenberger} for an approach using words in two letters, to \cite{Kerov83} for a formulation via Young tableaux, and to \cite{Westbury95} for the diagrammatic calculus using Temperley-Lieb algebras. Each standard tableau $T$ is uniquely determined by the entries in its second row say, $2\leq i<j\leq n$ and we write $T_{i,j}=T$. Another diagrammatic way of identifying this tableau is by using cup-diagrams: 
Draw $n$ dots and connect $i-1,i$ and $j-1,j$ by cups if $i+1\neq j$ or dots $i-1,i$ and $j-3,j$ if $i+1=j$, for the remaining dots draw vertical strands.

From this, we can read off the descents of $T$ by considering all $i$ such that $i$ and $i+1$ are connected by a cup. Thus, we have
\[
D(T_{i,j})=\begin{cases}
\{i-1,j-1\}, & i+1\neq j,\\
\{i-1\}, & i+1=j.
\end{cases}
\]
 and hence
\[
D^{c}(T_{i,j})=\begin{cases}
\{1,\dots,i-2,i,\dots,j-2,j,\dots,n-1\}, & i+1\neq j,\\
\{1,\dots,i-2,i,\dots,n-1\}, & i+1=j.
\end{cases}
\]
It follows that the $T_{i,j}$, which are maximal with regard to the $D^c(\cdot)$-inclusion poset are precisely of the form $T_{i,i+1}$ and $T_{2,n}$, see for example Figure \ref{fig:5_2}.

The action of $S_{n}$ is determined by the action of simple reflections on this basis via skein relations. This can be realised combinatorially as follows: $T,T'$ are connected by an edge, denoted by $T\sim T'$, if and only if flipping the diagram of $T'$ and sticking it to the bottom of $T$ leads to exactly one closed circle and $n-4$ through strands. Then, we have for $s_{i}\in S$
\[
s_{i}T=\begin{cases}
-T, & i\in D(T),\\
T+\sum_{T\sim T',i\in D(T')}T', & i\notin D(T).
\end{cases}
\]
As noted in the proof of Lemma \ref{lemma:two_column}, this sum may indeed be empty and only at most one $T'$ can appear. 
In order to verify the maximal cone property, we list some properties needed for our strategy.

\begin{lem}\label{lemma:two_row_computations} Consider $\lambda=(n-2,2)$ for $n>2$ and standard Young tableaux $T_{i,j}$ with $2\leq i<j\leq n$.
\begin{enumerate}
\item $T_{2,4}=(1+s_{2})^tT_{3,4}$.
\item $(1+s_{i-1})^tT_{2,i}=\begin{cases}
T_{2,i-1}+T_{2,i+1} & i>4\\
T_{2,5}+T_{3,4} & i=4
\end{cases}$;
\item For $v=(1+s_{i-1})^tT_{2,i}$ and $A=A^t_{T_{2,i+1}}=\prod_{i\in D^{c}(T_{2,i+1})}(1+s_{i})^t$,
we have $\frac{A^{n}v}{\lvert A^{n}v\rvert}\to T_{2,i+1}$;
\item $(1+s_{i-1})^tT_{i,j}=\begin{cases}
T_{i+1,j}+T_{i-1,j}, & i\neq2,j-2,\\
T_{i+1,j} & i=2\\
T_{i+1,j}+T_{i-1,j}+T_{i+2,j+1} & i=j-2
\end{cases}$;
\item For $v=T_{i,j}(1+s_{i-1})$ and $A=A^t_{T_{i+1,j}}=\prod_{i\in D^{c}(T_{i+1,j})}(1+s_{i})^t$,
we have $\frac{A^{n}v}{\lvert A^{n}v\rvert}\to T_{i+1,j}$.
\end{enumerate}    
\end{lem}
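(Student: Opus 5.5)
The plan is to verify the five items directly from the combinatorial description of the action just given, and to get the limit statements (3) and (5) from Lemma \ref{lemma:eigenvectors}. First I fix what $(1+s_k)^t$ does on the dual basis. By the dual $W$-graph formula in the Example on dual representations, $(1+s_k)^t T = 2T$ if $k\in D^c(T)$. If instead $k\in D(T)$, then
\[
(1+s_k)^t T=\sum_{T'\sim T,\; k\notin D(T')} T'.
\]
Every item therefore reduces to two checks: compute $D(T_{i,j})$ from the displayed formula, and list the neighbours $T'\sim T_{i,j}$ in the cup-diagram sense, that is, those $T'$ whose diagram glued under that of $T_{i,j}$ gives exactly one closed circle.

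For items (1), (2) and (4) I note that $i-1\in D(T_{i,j})$ in every case, so the operator $(1+s_{i-1})^t$ produces the sum of the neighbours of $T_{i,j}$ having $i-1\notin D$. The neighbours I expect are the diagrams obtained by moving a single cup endpoint by one step, namely $T_{i\pm1,j}$, together with the degenerate configurations. These occur when $i=2$ (then $T_{1,j}$ does not exist), when the cup sits next to the second cup ($i=j-2$, where the nested cup $T_{i+2,j+1}$ shows up), and the special nesting rule for $i+1=j$ (which produces $T_{3,4}$ from $T_{2,4}$ in (2)). For each candidate I will draw the two cup diagrams, glue them, and count the circles. This is routine case bookkeeping. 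The conventions for $i+1=j$ need care, and I would cross-check them against the $(3,2)$ and $(5,2)$ computations in Example \ref{ex:3_2} and Figure \ref{fig:5_2}.

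For (3) and (5) I apply Lemma \ref{lemma:eigenvectors} to $A=A^t_{T'}$ with $T'=T_{2,i+1}$, respectively $T'=T_{i+1,j}$, and to the nonnegative vector $v$ computed in (2) or (4). The lemma gives that the top eigenvalue is $2^k$ with eigenspace $\langle w_{T''}\mid D^c(T')\subseteq D^c(T'')\rangle$. The main obstacle is that $v$ is not strictly positive, so the "standard argument" must be adapted. I would handle this as follows. Every summand of $v$ other than $T'$ is some $T''$ with $D^c(T')\not\subseteq D^c(T'')$; this is read off the descent formula, since for instance $T_{i-1,j}$ has $i-2$ as a descent while $T_{i+1,j}$ does not. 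Next, $A$ preserves the nonnegative orthant, and on the complementary face its spectral radius is strictly less than $2^k$ by the norm argument in the proof of Lemma \ref{lemma:eigenvectors}. Hence the components of $A^n v$ along these other vectors grow like $o(2^{kn})$, while the $T'$-component, which has coefficient $1$, grows exactly like $2^{kn}$.

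There is a further point to check: could $A^n$ push mass from the other summands onto other top-eigenvectors $T''$ with $D^c(T')\subsetneq D^c(T'')$? For $T'=T_{2,i+1}$ I expect no such $T''$ is reachable, and in general the limit could only pick up such $T''$ if there were a Jordan block or positive mixing. I would rule this out by noting that the top eigenspace consists of fixed vectors of every $(1+s)^t/2$ involved. Since $A$ is self-adjoint-like via the invariant form, there are no Jordan blocks, so the normalized limit is the projection of $v$ onto that eigenspace. That projection equals $T'$ plus the projections of the other summands, and I must verify these vanish, either via orthogonality in the invariant form or by a direct check that those summands have zero pairing with the eigenspace. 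This projection computation is the step I expect to require the most care.
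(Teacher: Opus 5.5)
Your treatment of (1), (2) and (4) is the same as the paper's: these are read off from the $W$-graph/cup-diagram rule. The gap is in (3) and (5), at the step you yourself flag.

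Your $o(2^{kn})$ bound for the non-top coordinates is fine if you argue on the quotient by the top eigenspace $E$. That is an $A$-invariant coordinate subspace on which $2^k$ is semisimple. Do not argue on the ``complementary face'', which is not $A$-invariant. The real problem is the proposed finish: showing that the invariant-form projections of the non-target summands of $v$ onto $E$ vanish. This is neither needed nor true. Take $\lambda=(3,2)$ and item (3) with $i=4$, so $v=T_{2,5}+T_{3,4}$, $A=(1+s_2)^t(1+s_3)^t$ and $E=\mathbb{R}T_{2,5}$. From the $W$-graph, $AT_{3,4}=2T_{2,4}$, $AT_{2,4}=T_{2,4}+2T_{2,5}$ and $AT_{2,5}=4T_{2,5}$, so $4^{-n}A^nv\to\tfrac43T_{2,5}$. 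Solving the invariance equations for the form in this basis gives $(T_{3,4},T_{2,5})=\tfrac13(T_{2,5},T_{2,5})\neq0$. So the $T_{3,4}$ summand projects non-trivially: mass flows from non-top vectors into top ones, and the absence of Jordan blocks does nothing to stop this. The paper even observes that the dual Kazhdan--Lusztig Gram matrix is entrywise positive, so vanishing pairings should not be expected at all. What you actually need to exclude is only a component of the limit along the \emph{other} top eigenvectors: $T_{i+1,i+2}$ in (3), and $T_{i+1,i+2},T_{j,j+1}$ in (5).

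The missing idea is the one you state only as an expectation (``no such $T''$ is reachable''), and it is the paper's entire argument. All factors $(1+s)^t$ of $A$ are entrywise non-negative. Hence the coordinate subspace $U$ spanned by the basis vectors reachable from $\operatorname{supp}(v)$ under the individual factors is $A$-invariant. It contains every $A^nv$, and therefore the limit of $2^{-kn}A^nv$. By (2) and (4), the reachable set is:
\begin{itemize}
\item in (3), $\{T_{2,m}: 4\le m\le i+1\}\cup\{T_{3,4}\}$ (note that $T_{3,4}$ does occur, though it is not a top eigenvector);
\item in (5), for $i\neq 2,j-2$, $\{T_{k,j}: 2\le k\le i+1\}$.
\end{itemize}
In both cases $U\cap E=\mathbb{R}T'$. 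Since $v\ge T'$ entrywise, $A^nv\ge 2^{kn}T'$, so the limit is $cT'$ with $c\ge 1$ and the normalized iterates converge to $T'$. No projection computation is required.
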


\begin{proof}
    (1)(2)(4) Follow straight from the construction of the $W$-graph. For (3) we note that by applying $A$ to $v$ only terms of the form $T_{2,j}$ appear with $j\leq i+1$. The largest eigenvectors of $A$ live in the span of $T_{2,i+1}$ or $T_{i+1,i+2}$. As the latter term cannot appear by repeatedly applying $A$, we arrive at the result.
    For (5), we can assume that $i\neq 2,j-2$ since for $i=2$ there is nothing to show and in the case $i=j-2$, $T_{i+1,j}=T_{j-1,j}$ is already the unique largest eigenvector of $A$.  Now, the only summands appearing in repeatedly applying $A$ are of the form $T_{k,j}$, where $j$ is fixed. Moreover, the largest eigenvectors of $A$ all live in the span of $T_{i+1,j},T_{i+1,i+2},T_{j,j+1}$ and thus the result follows.
\end{proof}

Using the various computational results from Lemma \ref{lemma:two_row_computations}, we arrive at the following result.

\begin{thm}
    Let $\lambda=(n-2,2)$. The Kazhdan--Lusztig basis of $S^\lambda$ spans the maximal invariant cone inside $S^\lambda$.
\end{thm}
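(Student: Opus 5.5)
We pass to the dual problem, as in the proofs for hook shapes and two-column shapes. By the duality $(C^{\max})^*=C^{\min}$, it suffices to show that the dual Kazhdan--Lusztig basis $\{w_{T_{i,j}}\}$ spans the minimal invariant cone. We write $T_{i,j}$ for $w_{T_{i,j}}$, as in Lemma \ref{lemma:two_row_computations}. Since the operators $(1+s)^t$ act non-negatively in this basis, the positive orthant is invariant, so $C^{\min}$ is contained in it. We must therefore show that every $T_{i,j}$ lies in $C^{\min}$. Because $C^{\min}$ is a closed cone invariant under all $(1+s)^t$, it is closed under applying these operators, under positive rescaling, and under taking limits of normalized iterates.

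\textbf{Seeds.} Start with the Perron--Frobenius vector $v$ of $F$, which lies in the interior of $C^{\min}$ and is entrywise positive. The maximal elements of the $D^c(\cdot)$-inclusion poset are exactly $T_{i,i+1}$ for $2\le i\le n-1$ and $T_{2,n}$. For each such $T$, no $T'\neq T$ satisfies $D^c(T)\subseteq D^c(T')$. Lemma \ref{lemma:eigenvectors} then shows that the normalized iterates $A_T^k v/|A_T^k v|$ converge to $T$ (after checking that the leading component of $v$ is nonzero, which holds by positivity). Hence all of these seeds lie in $C^{\min}$. In particular $T_{3,4}$ lies in $C^{\min}$, and by Lemma \ref{lemma:two_row_computations}(1) so does $T_{2,4}=(1+s_2)^tT_{3,4}$.

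\textbf{First row.} By induction on $i\ge 4$, assume $T_{2,i}\in C^{\min}$. Then $v'=(1+s_{i-1})^tT_{2,i}\in C^{\min}$, and by part (2) of Lemma \ref{lemma:two_row_computations} it is a positive combination that involves $T_{2,i+1}$. Part (3) gives that the normalized iterates of $A_{T_{2,i+1}}$ applied to $v'$ converge to $T_{2,i+1}$, so $T_{2,i+1}\in C^{\min}$. This yields all $T_{2,j}$ for $4\le j\le n$. The vector $T_{2,3}$ is already a seed.

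\textbf{General $T_{i,j}$.} Fix $j$ and induct on $i$, starting from $T_{2,j}$. Given $T_{i,j}\in C^{\min}$ with $i+1<j$, part (4) shows that $(1+s_{i-1})^tT_{i,j}$ is a non-negative combination containing $T_{i+1,j}$. Part (5) shows that the normalized iterates of $A_{T_{i+1,j}}$ converge to $T_{i+1,j}$, so $T_{i+1,j}\in C^{\min}$. This reaches every $T_{i,j}$ with $2\le i<j\le n$. Therefore $C^{\min}$ contains the whole dual basis and equals the positive orthant in that basis. Dualizing, $C^{\max}$ is the cone spanned by the Kazhdan--Lusztig basis.

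\textbf{Main obstacle.} The delicate step is the justification of the convergence claims (3) and (5). The operator $A_{T'}$ has a non-simple top eigenspace, spanned by $T_{i+1,j}$, $T_{i+1,i+2}$ and $T_{j,j+1}$. One must show that the iterates stay inside a coordinate subspace, namely the span of $\{T_{k,j}\}$ with $j$ fixed, which meets this eigenspace only in the line of $T_{i+1,j}$. One must also show that the starting vector has a nonzero component along that eigenvector. I would establish both facts by reading off from the cup-diagram $W$-graph which tableaux can appear under each $(1+s_m)^t$ with $m\in D^c(T_{i+1,j})$. The projection onto the top eigenspace along the remaining generalized eigenspaces is then positive in the $T_{i+1,j}$ coordinate, and the standard power-iteration argument finishes the proof.
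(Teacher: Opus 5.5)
Your proposal is correct and follows essentially the same route as the paper. You pass to the dual basis and the minimal cone, extract the seed $T_{3,4}$ from the Perron--Frobenius vector via Lemma \ref{lemma:eigenvectors}, obtain $T_{2,4}$ by part (1), the row $T_{2,j}$ by parts (2)--(3), and the remaining $T_{i,j}$ by parts (4)--(5) of Lemma \ref{lemma:two_row_computations}; your treatment of this last step is more explicit than the paper's. One harmless slip: $T_{2,3}$ is not a standard tableau, since its second column would have $4$ above $3$, so the seeds are $T_{i,i+1}$ for $3\le i\le n-1$ together with $T_{2,n}$, and this changes nothing in the argument.
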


\begin{proof}
    We show that we can arrive at each tableau using the formulas in Lemma \ref{lemma:two_row_computations}(1). Since we start with a vector in the interior of the minimal cone and $T_{3,4}$ is maximal with regard to the complement of its descent set (see Example \ref{ex:3_2}), we can arrive at $T_{2,4}$ (Lemma \ref{lemma:two_row_computations}(2,3)).  Starting from $T_{2,4}$, we arrive at any element of the form $T_{2,i}$ (Lemma \ref{lemma:two_row_computations}(4,5)).
\end{proof}

\begin{rem}
Alternatively, one can also start with $T_{n-1,n}$, which is the maximal tableau in the last letter order and consider $s$ in our strategy such that $w_{T'}$ appears in $w_T$ for $T'<T$. We have verified computationally that this approach indeed works for all partitions $\lambda \vdash n$ with $n\leq 10$ and $\lambda$ not containing the partition $(4,4,1)$, see Example \ref{ex:cone_counterexample} below. We have further verified that the dual Kazhdan--Lusztig left cell representation associated to $\lambda=(3,3,3,3,3)\vdash15$ satisfies the minimal cone condition. 
\end{rem}

Inclusion of invariant cones now leads immediately to the following interesting result.

\begin{cor}
    For $\lambda$ being a hook-shape partition, two-column partition or a partition of the form $\lambda=(n-2,2)$, the base-change matrix between Young's seminormal form and the Kazhdan--Lusztig basis has only non-negative entries.
\end{cor}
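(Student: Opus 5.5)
We deduce the corollary from the maximality statements just proved (for hook shapes, two-column shapes and $(n-2,2)$), combined with the theorem that Young's seminormal basis spans a proper $\mathcal{A}$-invariant simplicial cone.

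First, we fix the relevant cones. Let $C_{YSN}$ be the cone spanned by Young's seminormal basis $\{e_T\}$, and $C_{KL}$ the cone spanned by the Kazhdan--Lusztig basis $\{b_T\}$, both realised inside $S^\lambda$. We use the same identification of the two representations with $S^\lambda$ under which the cones were compared earlier. Since $S^\lambda$ is irreducible, Schur's lemma makes this identification unique up to a nonzero scalar. By the lemma that $1+s$ acts non-negatively in both bases, $C_{YSN}$ is a proper $\{1+s\}$-invariant cone.

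Second, we use maximality. By the preceding theorems for the three families of shapes, $C_{KL}=C^{\max}$, the unique maximal invariant cone. Every invariant cone lies in $C^{\max}$: by the theorem of \cite{MejstrikProtasov2025} it is the unique maximal cone, and in the irreducible setting it contains all invariant proper cones. Hence $C_{YSN}\subseteq C_{KL}$. Concretely, each $e_T$ is a non-negative combination $e_T=\sum_{T'} c_{T',T}\, b_{T'}$ with $c_{T',T}\ge 0$, and the matrix $(c_{T',T})$ is exactly the base change from Young's seminormal basis to the Kazhdan--Lusztig basis. Its entries are therefore non-negative.

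The main obstacle is the scalar and sign normalisation in this identification. The isomorphism is determined only up to a scalar. If that scalar is negative, $C_{YSN}$ is replaced by $-C_{YSN}$, which is still invariant. Uniqueness of the maximal cone, together with pointedness, then forces exactly one of $\pm C_{YSN}$ to lie in $C^{\max}$. We fix the scalar so that the sign is positive; the natural way is to use upper unitriangularity of both bases with respect to the polytabloid basis. Both $e_T$ and $b_T$ then have leading polytabloid coefficient $1$, so the transition matrix is itself upper unitriangular with diagonal entries $1>0$. This rules out the $-C_{YSN}$ alternative and gives the claimed non-negativity.

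A second point to check is that the invariant cone in the maximality theorem is the one for the matrices $1+s$ acting on the Kazhdan--Lusztig basis itself, not on its dual. The paper proves minimality of the dual cone and obtains maximality of $C_{KL}$ via $(C^{\max})^*=C^{\min}$, so we apply exactly that conclusion.
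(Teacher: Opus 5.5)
Your proposal is correct and is essentially the paper's argument. The paper says only that the corollary follows from ``inclusion of invariant cones'': the seminormal cone is a proper $\{1+s\}$-invariant cone, so it lies in the maximal invariant cone, which for these shapes is the cone spanned by the Kazhdan--Lusztig basis.

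Your use of unitriangularity with respect to the polytabloid basis to rule out $-C_{YSN}\subseteq C^{\max}$ is a worthwhile refinement that the paper leaves implicit. Your second paragraph should say the containment holds only up to sign, since $-C^{\max}$ is also invariant.
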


\begin{problem}
    Is the base-change matrix between Young's seminormal form and the Kazhdan--Lusztig basis always non-negative (or equivalently is the cone corresponding to  Young's seminormal form always contained in the cone spanned by the Kazhdan--Lusztig basis)?
\end{problem}

This question and the investigations of this section link naturally to a conjecture by Blasiak on the base-change matrix between Hecke algebra analogues of these bases \cite[Conjecture 7.1]{Blasiak2014}. 

\begin{example}\label{ex:cone_counterexample}
    Consider the partition $\lambda=(4,4,1)$ and the Specht module $S^\lambda$ of dimension $84$. By exhausting all possibilities in our strategy, we can check computationally  that it fails to verify the minimal cone property for the dual Kazhdan--Lusztig basis. Indeed, the only basis vector that cannot be reached corresponds to the tableau
       $$\ytableausetup{centertableaux,notabloids,nosmalltableaux}
       T=\begin{ytableau}
1 & 3 & 4 & 8 \\
2 & 5 & 7 & 9 \\
6
\end{ytableau}.$$
     If we enumerate the dual Kazhdan--Lusztig basis elements by $w_1,\dots,w_{84}$ in the last letter order, tableau $T$ corresponds to $w_{14}$ and we can check that the cone spanned by $w_1,\dots,w_{13},w_{15},\dots,w_{84}$ and $w_{14}+w_{44},w_{14}+w_{84}$ is invariant under the operators $1+s$.
    For the conjugate partition $\lambda'=(3,2,2,2)$, we can check computationally that our strategy recovers all basis vectors of the dual Kazhdan--Lusztig basis and thus verifies the minimal cone property for it. We have verified that this is indeed the first case, where this phenomenon occurs, see \cite{Goertzen_kl-optimizer}. Moreover, all other tested examples in which the minimal cone property fails come from partitions, where the corresponding Young diagram contains the Young diagram of the partition $(4,4,1)$.
\end{example}

While the minimal invariant cone is unique, it is not, in general, simplicial. Hence, there may exist multiple minimal invariant simplicial cones containing it. This observation raises the following question.

\begin{problem}
    Among all proper invariant simplicial cones, can we distinguish the one corresponding to the Kazhdan--Lusztig left cell representation?
\end{problem}

This motivates the geometric framework in the following section focused on simplicial cones, with two examples provided by Young's seminormal representation and the Kazhdan--Lusztig left cell representation.

\section{An Optimization Problem for Computing Kazhdan--Lusztig Left Cell Representations}\label{sec:compute_left_cells}

Let $W=S_n$ for a positive integer $n>0$ and $S$ the set of simple reflections inside $S_n$ and $\lambda\vdash n$ a partition of $n$.  
As noted in the previous section, the matrices of the operators $1+s\in\mathrm{End}(S^\lambda)$ written in Young's seminormal form or Kazhdan--Lusztig left cell representation have non-negative entries. These bases share another key property.

\begin{thm}\label{thm:uni_upper_specht}
    The base-change matrix from Young's seminormal basis or the Kazhdan--Lusztig left cell representation basis to the polytabloid basis is upper unitriangular (when ordering the basis vectors with regard to the last letter order).
\end{thm}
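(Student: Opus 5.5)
The natural approach is to reduce both claims to one principle. Any basis $\{e_T\}$ of $S^\lambda$ that is compatible with the restriction chain $S_1\subset S_2\subset\cdots\subset S_n$ and the last letter order is unitriangular with respect to the polytabloids, once each $e_T$ is suitably normalised. Concretely, for $k\le n$ let $F_k(T)$ be the span of all basis vectors $e_{T'}$ such that $T'$ agrees with $T$ in the positions of $n,n-1,\dots,k+1$. The goal is to show that, for both bases and for the polytabloids $v_T$, these spans coincide. The argument is by induction on $n$, using the branching rule $\mathrm{Res}^{S_n}_{S_{n-1}}S^\lambda=\bigoplus_{\mu=\lambda-\square}S^\mu$.

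For the polytabloid basis I would invoke James' filtration of the restriction (James--Kerber, Chapter 7, or James' Theorem 9.3). Order the removable boxes by row index, $\mu^{(1)},\dots,\mu^{(r)}$, with $n$ lying in increasingly lower rows. Then $V_j=\langle v_T\mid n \text{ lies in one of the first } j \text{ removable rows}\rangle$ is an $S_{n-1}$-submodule. Moreover, $V_j/V_{j-1}\cong S^{\mu^{(j)}}$, with $v_T$ mapping to $v_{T\setminus n}$. In the last letter order these $V_j$ are exactly initial segments.

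For Young's seminormal basis, the $e_T$ with $n$ in a fixed row span an $S_{n-1}$-submodule isomorphic to $S^{\mu^{(j)}}$, and this module carries seminormal form recursively. The $V_j$ are $S_{n-1}$-stable and their subquotients are pairwise non-isomorphic, because the branching is multiplicity free. Hence $V_j$ is the unique submodule whose composition factors are $S^{\mu^{(1)}},\dots,S^{\mu^{(j)}}$. It follows that $V_j=\bigoplus_{i\le j}\langle e_T\mid n\in \text{row of }\mu^{(i)}\rangle$, so the transition matrix is block upper triangular. Inside each block, induction on $n$ applied to $S^{\mu^{(j)}}$ gives unitriangularity up to diagonal scalars. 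Finally, I rescale each $e_T$ so that its diagonal coefficient is $1$. This rescaling preserves non-negativity of the matrices of $1+s$, because conjugating by a positive diagonal matrix keeps entries non-negative. Positivity of the diagonal coefficients can be checked inductively, for instance via Murphy's construction of $e_T$ as $v_T$ plus lower terms using Jucys--Murphy eigenvalue projections (Murphy81). Using Murphy's construction directly, $e_T=\prod(\text{projectors})v_T$, would in fact give the whole statement with coefficient $1$ in one step.

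For the Kazhdan--Lusztig left cell basis I would run the same argument. The key input is that restriction of a left cell module of $S_n$ to $S_{n-1}$ is filtered by left cell modules compatibly with the $W$-graph. This follows from the theorem of Barbasch--Vogan and Roichman: restricting a KL cell $W$-graph to a parabolic subgroup decomposes it into subcells, given by removing $n$ from the $Q$-tableau. The induced filtration by the row containing $n$ should coincide with James' filtration. Unitriangularity with coefficient exactly $1$ would then follow inductively from the base case, where the one-dimensional modules have $b=v$. The normalisation can also be checked by observing that $\dot b_{w}$ for $w$ with $Q(w)=T$ has a distinguished leading term on $v_T$.

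The main obstacle is the KL case. Restriction to $S_{n-1}$ yields only a filtration of the $W$-graph, not a direct sum, so I must check that the order of this filtration matches the last letter order; I expect it to be the reverse of the dominance-type order on removable boxes. I also need a precise identification of the left cell module with $S^\lambda$ that sends the KL vector indexed by $T$ to $v_T$ plus lower terms. I would try first to realise the identification concretely via Garsia--McLarnan, who relate KL left cell bases to polytabloids with exactly this triangularity, and to cite it.
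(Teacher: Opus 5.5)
Your block-triangularity step is correct: James' filtration $V_1\subset\cdots\subset V_r$ consists of initial segments of the last letter order, and multiplicity-freeness forces $V_j=W_1\oplus\cdots\oplus W_j$, where $W_i$ is spanned by the seminormal vectors with $n$ in the $i$-th removable row.

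The gap is the normalisation. The map $\theta_j\colon V_j\to S^{\mu^{(j)}}$ is only $S_{n-1}$-equivariant. Schur's lemma therefore gives $\theta_j(e_T)=c_j\,e'_{T\setminus n}$, with one unknown scalar $c_j$ per block. Hence $e_T=c_jv_T+(\text{lower terms})$, and neither the inductive hypothesis nor the base case relates $c_1,\dots,c_r$.

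Rescaling each $e_T$ does not close this gap. The matrices of Young's seminormal form (coefficient $1$ on $e_{s_iT}$ when $T<s_iT$) determine the basis up to a single global scalar. So the theorem says exactly that all $c_j$ coincide. After your rescaling, $s_{n-1}$ acts by $D^{-1}\rho(s_{n-1})D$, which is a different normalisation; that non-negativity of $1+s$ survives is beside the point.

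What is missing is a comparison across blocks via $s_{n-1}$, the only generator that moves $n$. For the seminormal basis a short leading-term argument supplies it. Write $c_T$ for the coefficient of $v_T$ in $e_T$. Each $v_S$ is $\{S\}$ plus tabloids strictly dominated by $\{S\}$, and the last letter order refines dominance on standard tableaux. So $e_T$ has coefficient $c_T$ on $\{T\}$ and $0$ on $\{U\}$ for every standard $U>T$. Comparing coefficients of $\{s_iT\}$ on both sides of $s_ie_T=\alpha e_T+e_{s_iT}$ (for $T<s_iT$) gives $c_T=c_{s_iT}$, and such moves connect $\mathrm{Std}(\lambda)$. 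Without this step, or Murphy's construction (which is what the paper cites), you have only shown that some diagonal rescaling of Young's basis is unitriangular.

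For the Kazhdan--Lusztig basis, ``coefficient exactly $1$ would follow inductively from the base case'' fails for the same reason. The inputs your induction needs are also unproved:
\begin{itemize}
\item that the span of the $b_T$ with $n$ in the first $j$ removable rows is $S_{n-1}$-stable (you only expect the filtration to run in this direction);
\item that each subquotient carries \emph{exactly} the left cell $W$-graph of $\mu^{(j)}$ under $T\mapsto T\setminus n$, with the same $\mu$-values. An abstract isomorphism is not enough for Schur's lemma to identify it with the inductively known basis.
\end{itemize}
(With the paper's convention $x\sim_L y\iff Q(x)=Q(y)$, cell vertices are indexed by $P$, so the blocks come from the position of $n$ in $P$, not $Q$.)

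Even granting both, a leading-term comparison across blocks would require knowing where, in the last letter order, the other $\mu$-neighbours of $T$ occurring in $s_{n-1}b_T$ lie. That is information of the same kind as the triangularity being proved. So the KL half of your proposal ultimately rests on Garsia--McLarnan. That citation, together with Murphy for the seminormal basis and Naruse, is the paper's entire proof.
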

\begin{proof}
    For a proof for Young's seminormal basis, see \cite{Murphy81}, and for the Kazhdan--Lusztig basis, see \cite{GarsiaMcLarnan88,Naruse89}.
\end{proof}

\begin{rem}\label{rem:transition_matrix_non_negative}
    The Specht module has an analogue with action of the Hecke algebra, see \cite{DipperJames86}. In \cite{McDonoughPallikaros2005}, it is shown that the transition matrix between the basis of the Specht module of the Hecke algebra and Kazhdan--Lusztig left cell representation is unitriangular with entries given by parabolic Kazhdan--Lusztig polynomials. This was also found independently in \cite{Naruse2025SpechtLeftCell}. It follows, by evaluating these polynomials at $1$, that the transition matrix in the symmetric group case has only non-negative entries since these polynomials are known to have non-negative coefficients, see for instance \cite{KashiwaraTanisaki}. This also shows that each basis element of the polytabloid basis already lives in the invariant cone spanned by the Kazhdan--Lusztig basis.
    Starting with Young's natural representation instead, as described in \cite{GarsiaMcLarnan88}, the base-change matrix to the Kazhdan--Lusztig left cell representation contains negative entries in general. Here, the first occurrence of a base-change matrix with negative entries occurs for the partition $\lambda=(2,2,1,1,1)\vdash7$.
\end{rem}

We want to distinguish the Kazhdan--Lusztig basis intrinsically in $S^\lambda$ using only the $S_n$--action
(at $v=1$) together with the (unique up to positive scalar) invariant positive-definite bilinear form $(\cdot,\cdot)$. In light of Theorem~\ref{thm:uni_upper_specht},
we restrict our search to upper unitriangular base-change matrices $A$ from the polytabloid basis. One
motivation is that any change-of-basis matrix admits a QR factorization into an orthogonal factor and an
upper triangular factor: the orthogonal factor is an isometry for $(\cdot,\cdot)$ and therefore does not affect the
geometry, whereas the triangular factor adds the main distortion. Requiring $A$ to be unitriangular
fixes the volume normalization. As observed in Section~\ref{sec:compute_cones}, the minimal or maximal invariant cones inside $S^\lambda$ need not be
simplicial. Nevertheless, we restrict attention to \emph{invariant simplicial} cones, since these are exactly the
cones arising as a positive orthant in some basis. Concretely, writing $A_s$ for the matrix of $1+s$ in the
polytabloid basis, the condition that $1+s$ has non-negative matrix entries in the $A$-basis is
\[
A^{-1}A_sA \ge 0 \qquad (s\in S).
\]
We seek a geometric criterion that selects a distinguished cone within this restricted family. Let $G$ be the
Gram matrix of $(\cdot,\cdot)$ (the unique up-to-scalar invariant form on $S^\lambda$) in the polytabloid basis and set $G_A := A^{\mathsf t}GA$. For a basis
$v_1,\dots,v_d$ spanning a simplicial cone, define the (squared) normalized volume
\[
\operatorname{nvol}(A)^2 := \frac{\det(G_A)}{\prod_{i=1}^d (G_A)_{ii}}.
\]
By Hadamard's inequality, $\operatorname{nvol}(A)^2\le 1$, with equality precisely when the basis vectors are
pairwise orthogonal. Since $A$ is unitriangular, we have $\det(G_A)=\det(G)$, so extremizing
$\operatorname{nvol}(A)$ is equivalent to extremizing $\prod_i (G_A)_{ii}$. Moreover, by the inequality of arithmetic and geometric means we have
\[
\prod_{i=1}^d (G_A)_{ii} \le \left(\frac{\mathrm{Tr}(G_A)}{d}\right)^d.
\]
Hence, extremizing $\mathrm{Tr}(A^{\mathsf t}GA)$ provides a computationally tractable objective for extremizing
$\prod_i (G_A)_{ii}$ (equivalently, for extremizing $\operatorname{nvol}(A)$) on the feasible region. This
motivates the following optimization problem, in which we consider both maximization and minimization.

\begin{optproblem}\label{conj:KLLeftcellTypeA}
Consider the problem of maximizing or minimizing
\[
f(A)=\mathrm{Tr}(A^tGA)
\]
subject to the constraints
\begin{enumerate}
    \item $A$ is upper unitriangular and
    \item $A^{-1}A_sA\geq0 \phantom{123}(*)$.
\end{enumerate}
Let $A_{\max}$ and $A_{\min}$ be maximizers and minimizers of $f$ under these constraints, respectively. 
\end{optproblem}

In order to show that $A_{\max}$ is well-defined, we first show that the constraints correspond to a compact semialgebraic set. Here compactness is understood with respect to the matrix norm induced by the standard Euclidean norm.

\begin{restatable}{prop}{CompactProp}\label{prop:compact}
Let $A_1,\dots,A_r\in \mathbb{R}^{d\times d}$ be an irreducible family of matrices, i.e.\ they share no non-trivial common invariant subspace. Then
$$\mathcal{F}=\{A\in \mathbb{R}^{d\times d}\mid A \text{ is upper unitriangular and } A^{-1}A_kA\ge 0,\ k=1,\dots,r\}$$
is a compact semialgebraic set.
\end{restatable}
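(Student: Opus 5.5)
The plan is to treat the algebraic part directly and to spend most of the effort on boundedness. \emph{Semialgebraicity and closedness.} An upper unitriangular $A$ has $\det A=1$, so $A^{-1}=\operatorname{adj}(A)$ has entries that are polynomials in the entries of $A$. Hence $\mathcal{F}$ is cut out by the polynomial equalities $a_{ii}=1$ and $a_{ij}=0$ for $i>j$, together with the polynomial inequalities $\operatorname{adj}(A)A_kA\ge 0$. So $\mathcal{F}$ is a closed semialgebraic subset of $\mathbb{R}^{d\times d}$, and compactness reduces to boundedness.

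\emph{Reformulation via cones.} For $A\in\mathcal{F}$ let $C_A$ be the simplicial cone spanned by the columns $a_1,\dots,a_d$ of $A$. The condition $A^{-1}A_kA\ge0$ says exactly that every $A_k$ maps $C_A$ into itself. By the dual-cone remark, $C_A^*=\{x\mid x^tc\ge 0\ \forall c\in C_A\}$ is spanned by the rows of $A^{-1}$, viewed as vectors, and is invariant under every $A_k^t$. The triangular shape supplies two anchors that do not depend on $A$:
\begin{itemize}
\item the first column of $A$ is $e_1$, so $e_1\in C_A$;
\item the last row of $A^{-1}$ is $e_d^t$, so $e_d\in C_A^*$, which means $C_A\subseteq\{x_d\ge 0\}$.
\end{itemize}
Symmetrically, $e_d\in C_A^*$ and $C_A^*\subseteq\{x_1\ge0\}$.

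\emph{Uniform sandwich.} Suppose $\mathcal{F}$ were unbounded; we will derive a contradiction. We show that there are a fixed solid cone $K_1$ and a fixed pointed cone $K_2$ with $K_1\subseteq C_A\subseteq K_2$ for all $A$ in a suitable sequence. Take any sequence in $\mathcal{F}$. Intersecting with the unit sphere and using compactness of the space of closed subsets of the sphere in the Hausdorff metric, we pass to a subsequence along which both $C_{A_m}$ and $C_{A_m}^*$ converge to closed convex cones $C$ and $C'$.
\begin{itemize}
\item $C$ is invariant under all $A_k$, contains $e_1$, and lies in $\{x_d\ge0\}$.
\item Its lineality space $C\cap(-C)$ is an $A_k$-invariant subspace different from $\mathbb{R}^d$, so by irreducibility it is $0$, and $C$ is pointed.
\item Its span is a nonzero invariant subspace, so it is $\mathbb{R}^d$, and $C$ is solid.
\item The same argument applies to $C'$, since the transposed family $\{A_k^t\}$ is also irreducible: an invariant subspace of it gives an invariant orthogonal complement for the $A_k$.
\end{itemize}
Now choose $K_2$ to be a slight enlargement of $C$ that is still pointed, and $K_2'$ a slight enlargement of $C'$. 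For large $m$ we get $C_{A_m}\subseteq K_2$ and $C_{A_m}^*\subseteq K_2'$. Dualizing the second inclusion gives $C_{A_m}\supseteq (K_2')^*=:K_1$, which is solid because $K_2'$ is pointed. I expect this step to be the main obstacle: making the Hausdorff-limit and enlargement argument precise, and checking that invariance and the $e_1$, $e_d$ anchors survive in the limit.

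\emph{Bounding the entries.} Fix $x_0$ with $x_0+\varepsilon y\in K_1$ for all $|y|\le1$. Fix $w$ with $w^tx\ge c|x|$ for all $x\in K_2$; such $w$ exists because $K_2$ is pointed. Write $x_0=\sum_j c_ja_j$ with $c=A_m^{-1}x_0\ge0$. Since $A_m^{-1}(x_0+\varepsilon y)\ge0$ for every $|y|\le1$, we get $c_j\ge\varepsilon\,|r_j|$, where $r_j$ is the $j$-th row of $A_m^{-1}$. On the other hand,
\[
w^tx_0=\sum_j c_j\,w^ta_j\ge c\sum_j c_j|a_j|.
\]
Combining the two, $\varepsilon c\,|r_j|\,|a_j|\le w^tx_0$ for every $j$. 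Since $A_m^{-1}$ is unitriangular, $|r_j|\ge1$, so $|a_j|\le w^tx_0/(\varepsilon c)$. Thus the columns of $A_m$ are uniformly bounded along a subsequence of an arbitrary sequence in $\mathcal{F}$. This contradicts unboundedness: choosing the starting sequence with $|A_m|\to\infty$ gives a subsequence with bounded columns, which is impossible. Together with closedness, this proves that $\mathcal{F}$ is compact.
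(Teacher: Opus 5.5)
Your proof is correct, and its boundedness step takes a genuinely different route from the paper's.

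\textbf{The paper's route.} The paper works only with the primal cones. It normalizes the columns of an unbounded sequence $U_n$ and passes to a limit matrix $V$. This $V$ is singular, because some diagonal entry $1/\lVert u_{n,j}\rVert$ tends to $0$, and nonzero, because its first column is $e_1$. It then shows the Hausdorff-limit cone $C$ is invariant and pointed, the latter via $C\subseteq\{x_d\ge 0\}$. Finally it uses pointedness to keep bounded the coefficients of points of $K_n$ in terms of the $v_{n,j}$. This gives $\operatorname{span} C=\operatorname{im}V$, a proper nonzero invariant subspace, which contradicts irreducibility.

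\textbf{Your route.} You take limits of both $C_{A_m}$ and $C_{A_m}^*$. You use both triangular anchors ($e_1\in C_A$ and $e_d\in C_A^*$), together with irreducibility of $\{A_k\}$ and of $\{A_k^t\}$, to make both limits pointed. You then turn this into a fixed sandwich $K_1\subseteq C_{A_m}\subseteq K_2$, with $K_1$ solid and $K_2$ pointed. The estimate $\varepsilon c\,|r_j|\,|a_j|\le w^t x_0$, combined with $|r_j|\ge 1$, bounds the columns directly. You therefore never need to identify the limit cone with $\operatorname{cone}(v_1,\dots,v_d)$.

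\textbf{Comparison.} The paper's argument is more economical: one cone, one anchor, and no transposed family. Yours is quantitative and isolates a reusable principle. A simplicial cone squeezed between a fixed solid cone and a fixed pointed cone has $|a_j|\,|r_j|$ uniformly bounded, which controls $A$ and $A^{-1}$ at the same time.

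\textbf{Minor remarks.}
\begin{itemize}
\item The solidity of $C$ and $C'$ that you prove is never used. Only pointedness matters, and pointedness of $C'$ is exactly what makes $K_1=(K_2')^*$ solid.
\item The enlargement step you flag as the main obstacle is routine. Choose $|w|=1$ with $w^t x\ge c_0>0$ on $C\cap\mathbb{S}^{d-1}$, which is possible because $C$ is pointed and closed. Set $K_2=\{x : w^t x\ge \tfrac{c_0}{2}|x|\}$. This is a closed, convex, pointed cone, and it contains $C_{A_m}$ as soon as the Hausdorff distance of the slices drops below $c_0/2$. The same construction works for $K_2'$.
\item You use $c$ both for the constant in $w^t x\ge c|x|$ and for the coefficient vector $A_m^{-1}x_0$. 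Rename one of them.
\end{itemize}
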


We defer a detailed proof of Proposition \ref{prop:compact} to Appendix \ref{appendix}, as it involves methods different from the previous ones, and might disrupt the flow of the paper.

Note that in the situation of Problem \ref{conj:KLLeftcellTypeA} the feasibility region $\mathcal{F}$ is non-empty. We can define a natural order on $\mathcal{F}$ via containment of invariant cones. First note that for $A\in \mathcal{F}$, $C=A\mathbb{R}_{\geq 0}^d=\{Ax\mid x\in \mathbb{R}_{\geq 0}^d\}$ is a proper invariant cone. In particular, we can view $\mathcal{F}$ as the space of all invariant simplicial cones, spanned by bases that are upper unitriangular with respect to a given basis.
For $A,A'\in \mathcal{F}$, we write $$A\preccurlyeq A' \text{ if } A\mathbb{R}_{\geq 0}^d\subseteq A'\mathbb{R}_{\geq 0}^d,$$ which holds if and only if $A'^{-1}A\geq0$. Proposition \ref{prop:compact} gives rise to several questions regarding the geometry of the feasibility region $\mathcal{F}$ in Problem \ref{conj:KLLeftcellTypeA}. 

\begin{problem}
    What can be said about the geometry of $\mathcal{F}$? What is its dimension? Is it always connected? Does it have a unique minimum and a unique maximum with respect to $\preccurlyeq$?
\end{problem}

We can interpret feasible solutions in Problem \ref{conj:KLLeftcellTypeA} as base-change matrices from the polytabloid basis to a basis such that the operators $1+s$ yield a proper invariant cone. As noted before, both the Kazhdan--Lusztig left cell representation and Young's seminormal form provide feasible solutions. In Theorem \ref{thm:seminormal}, we show that $A_{\min}$ yields Young's seminormal form. 

\begin{problem}
    Which basis corresponds to $A_{\max}$  in Problem \ref{conj:KLLeftcellTypeA}?
\end{problem}

\begin{rem}\label{rem:cell_experiments} 
For all partitions of $n \le 7$, we have verified that the maximum attained in Problem \ref{conj:KLLeftcellTypeA} is unique and yields the base-change matrix to the Kazhdan--Lusztig basis. However, for larger ranks, the situation changes. The action of $1+s$ is non-negative in both the Springer basis and the $p$-canonical bases of Specht modules. See for example \cite{Hotta81,LusztigComments2017} for the Springer basis and \cite{Jensen2020} for $p$-canonical bases. Therefore, these bases give rise to invariant cones. The cone spanned by the Kazhdan--Lusztig basis lies within the cones spanned by the Springer basis and the $p$-canonical bases \cite{Williamson2012,Williamson2015}. Both of these cones yield strictly larger objective values in our trace maximization problem. For examples where these bases differ from the Kazhdan--Lusztig basis, see \cite{Williamson2015} for a case with $\lambda=(4,4,2,2)\vdash 12$, and \cite{LaniniMcNamara2021} for further examples. Following the work of Nguyen \cite{Nguyen2020}, it is possible to distinguish the Kazhdan--Lusztig basis from a $p$-canonical basis by adding further combinatorial constraints  to the optimization problem, such as those defining admissible W-graphs.
In all computed examples, we observed that the Gram matrix of the dual Kazhdan--Lusztig left cell representation is strictly positive. Consequently, the underlying cone is acute. This geometric structure connects naturally to the minimality properties observed in Section \ref{sec:compute_cones}. 
\end{rem}

\begin{rem}
    For practical reasons, we can rewrite $(*)$ as $A_sA=AP_s$, where $P_s$ is an unknown matrix with non-negative entries. Hence, Problem \ref{conj:KLLeftcellTypeA} yields a continuous quadratic optimization problem. A remarkable side effect of Problem \ref{conj:KLLeftcellTypeA} is that we obtain base-change matrices as well as representation matrices of $1+s$. 
    In our experiments, the main bottleneck is the large number of unknowns of the non-negative matrices, $P_s$, which are known to be sparse in practice for both the Kazhdan--Lusztig left cell basis and Young's seminormal form. Further improvement of the performance could be achieved by imposing several conditions on $P_s$, resembling the sparsity of the Kazhdan--Lusztig basis, which can be easily implemented.
\end{rem}

\begin{example}
    Consider the partition $\lambda=(2,1)$. We first fix the standard polytabloid basis $\{v_1, v_2\}$, which is given by:
    \[
    \ytableausetup{boxsize=normal,tabloids}
    v_1 = \ytableaushort{12, 3}-\ytableaushort{23, 1}, \quad
    v_2 = \ytableaushort{13, 2}-\ytableaushort{23, 1}.
    \]
    Using the standard invariant form on tabloids, we compute the Gram matrix in this basis to be:
    \[
        G=\begin{pmatrix}2 & 1\\ 1 & 2 \end{pmatrix}.
    \]
    The representation matrices for the operators $1+s_{1}$ and $1+s_{2}$ in this basis are:
    \[
        A_{s_1} = \begin{pmatrix}0 & -1\\ 0 & 2 \end{pmatrix}, \quad 
        A_{s_2} = \begin{pmatrix}1 & 1\\ 1 & 1 \end{pmatrix}.
    \]
    
    Now, consider a general unitriangular base-change matrix $A = \begin{pmatrix}1 & x\\ 0 & 1 \end{pmatrix}$. We compute the action of the operators in the new basis:
    \begin{align*}
        A^{-1}A_{s_1}A &= \begin{pmatrix}0 & -2x-1\\ 0 & 2 \end{pmatrix}, \\
        A^{-1}A_{s_2}A &= \begin{pmatrix}1-x & 1-x^{2}\\ 1 & 1+x \end{pmatrix}.
    \end{align*}
    Requiring the resulting matrices to be non-negative enforces the constraint $x \in [-1, -0.5]$.
    
    We now consider the optimization problem. The objective function is the trace of the transformed Gram matrix:
    \[
        f(A) = \mathrm{Tr}(A^t G A) = 4+2x+2x^2.
    \]
    We optimize this function over the feasible interval $x \in [-1, -0.5]$:
    \begin{enumerate}
        \item The maximum is attained at $x=-1$. This yields the Kazhdan--Lusztig left cell representation, with operator matrices:
        \[
        \begin{pmatrix}0 & 1\\ 0 & 2 \end{pmatrix}, \quad \begin{pmatrix}2 & 0\\ 1 & 0 \end{pmatrix}.
        \]
        Geometrically (as in Example \ref{ex:simple_roots}), this corresponds to the simple root basis, meaning the polytabloid basis vectors were of the form $\alpha$ and $\alpha+\beta$.
        
        \item The minimum is attained at $x=-0.5$. This yields Young's seminormal form, with operator matrices:
        \[
        \begin{pmatrix}0 & 0\\ 0 & 2 \end{pmatrix}, \quad \begin{pmatrix}0.5 & 0.75\\ 1 & 1.5 \end{pmatrix}.
        \]
        At this minimum, $A$ is the unique upper unitriangular matrix that diagonalizes $G$.
    \end{enumerate}
    The geometric relationship between these bases is visualized in Figure \ref{fig:a2_example}.
    
    \begin{figure}[H]
        \centering
        \begin{tikzpicture}[scale=4.5]
\fill [green!20, opacity=0.5] (0,0) -- (0:0.8) arc (0:109:0.8) -- cycle;
\fill [blue!20, opacity=0.5] (0,0) -- (30:0.6) arc (30:90:0.6) -- cycle;

\draw [->, red, thick] (0,0) -- (1,0) node [right] {$\alpha$};
\draw [->, red, thick] (0,0) -- (-0.5, {sqrt(3)/2}) node [left] {$\beta$};
\draw [->, red, thick] (0,0) -- (0.5, {sqrt(3)/2}) node [right] {$\alpha + \beta$};

\draw [->, blue, thick] (0,0) -- (0.5, {sqrt(3)/6}) node [right] {$\omega_1$};
\draw [->, blue, thick] (0,0) -- (0, {sqrt(3)/3}) node [above] {$\omega_2$};

\draw [->, black, thick]  (0,0) -- (-0.3, {sqrt(3)/2}) node [above right] {$(1+x)\alpha + \beta$};

\end{tikzpicture}
        \caption{Invariant cones for $x\in [-1,-0.5]$ under operators $1+s_1,1+s_2$ for the Specht module $S^{(2,1)}$. The minimal cone given by the fundamental weight basis is also highlighted. The maximal cone is attained for $x=-1$ and agrees with the Kazhdan--Lusztig basis. For $x=-0.5$, we obtain Young's seminormal basis.}
        \label{fig:a2_example}
    \end{figure}
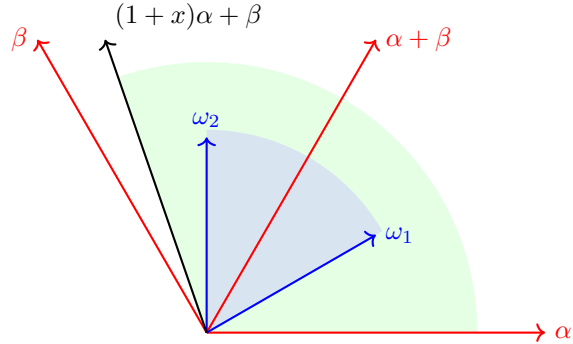
\end{example}

We can generalize the observations of the example in the following theorem.

\begin{thm}\label{thm:seminormal}
 The minimum  $A_{\min}$  attained in Problem \ref{conj:KLLeftcellTypeA} is unique and yields the base-change matrix to Young's seminormal basis.
\end{thm}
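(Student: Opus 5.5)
The plan is to show that Young's seminormal base change $A_{\mathrm{sn}}$ minimizes $f(A)=\mathrm{Tr}(A^tGA)$ over \emph{all} upper unitriangular matrices, uniquely. Since $A_{\mathrm{sn}}$ is feasible (the seminormal matrices of $1+s$ are non-negative, and Theorem~\ref{thm:uni_upper_specht} gives unitriangularity), it is then also the unique minimizer over the feasible region $\mathcal{F}$, so the non-negativity constraint $(*)$ plays no role in the minimization. The key input is that the seminormal basis is orthogonal for the invariant form \cite{Murphy81}. Hence $A_{\mathrm{sn}}^tGA_{\mathrm{sn}}=D$ is diagonal with positive entries, i.e.\ $G=L^tDL$ with $L=A_{\mathrm{sn}}^{-1}$ upper unitriangular. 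This is the (unique) $LDL^t$-type factorization of the positive definite matrix $G$.

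Write an arbitrary upper unitriangular $A$ as $A=A_{\mathrm{sn}}U$ with $U=A_{\mathrm{sn}}^{-1}A$. Upper unitriangular matrices form a group, so $U$ is again upper unitriangular, and
\[
f(A)=\mathrm{Tr}(U^tDU)=\sum_{i,j} D_{ii}U_{ij}^2=\sum_i D_{ii}+\sum_{i<j}D_{ii}U_{ij}^2 .
\]
Since every $D_{ii}>0$, this is at least $\mathrm{Tr}(D)=f(A_{\mathrm{sn}})$. Equality holds iff $U_{ij}=0$ for all $i<j$, i.e.\ $U=I$ and $A=A_{\mathrm{sn}}$. This gives both the minimum value and uniqueness in one computation. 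Equivalently, one can say that $f$ is a strictly convex quadratic on the affine space of unitriangular matrices whose unique critical point is the matrix diagonalizing $G$.

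The main step to check carefully is the orthogonality of Young's seminormal basis with respect to the polytabloid form, together with the fact that the unitriangular normalization in Theorem~\ref{thm:uni_upper_specht} is the one that makes $A_{\mathrm{sn}}^tGA_{\mathrm{sn}}$ diagonal. Orthogonality follows from Murphy's description of the seminormal vectors as simultaneous eigenvectors of the Jucys--Murphy elements. These elements are self-adjoint for the invariant form and have distinct joint eigenvalues (contents) on distinct standard tableaux, so the eigenvectors are orthogonal. Any rescaling would only change $D$, not its diagonality, and the unitriangular normalization is forced by Theorem~\ref{thm:uni_upper_specht}. Compactness of $\mathcal{F}$ (Proposition~\ref{prop:compact}) is not needed for this direction, since the lower bound holds globally.
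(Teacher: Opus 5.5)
Your proof is correct, and it reaches the conclusion by a different mechanism than the paper.

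The paper works from the optimization side. It computes $\nabla f(A)=2GA$ and requires the strictly upper triangular part of $GA^*$ to vanish. A Cholesky factorization $G=L^tL$ then shows that $LA^*$ is diagonal, so $A^*$ is the unique unitriangular matrix diagonalizing $G$. The paper identifies this matrix with the seminormal base change via Murphy's orthogonality, checks feasibility, and finally uses convexity of $f$ (with $G$ positive definite) to conclude that this critical point is the unique minimizer.

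You start instead from the candidate $A_{\mathrm{sn}}$ and use its orthogonality to get the exact identity $f(A_{\mathrm{sn}}U)=\mathrm{Tr}(D)+\sum_{i<j}D_{ii}U_{ij}^2$. This proves global minimality over all unitriangular matrices and uniqueness at once. It needs no gradient computation, no Cholesky step and no separate convexity argument. It also quantifies the excess of any other unitriangular basis as a positively weighted sum of squares of the off-diagonal entries of $A_{\mathrm{sn}}^{-1}A$.

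The paper's route buys constructiveness: it derives the minimizer from $G$ alone (essentially Gram--Schmidt), which the subsequent remark exploits computationally. Your route needs the answer in hand beforehand.

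Both arguments use the same external inputs: orthogonality and unitriangularity of the seminormal basis, and non-negativity of $1+s$ in that basis. Both transfer verbatim to the claim $B_{\min}=I$ for the Gelfand--Tsetlin basis in the $\mathfrak{sl}_n$ setting, where $G$ is already diagonal.
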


\begin{proof}
We first describe $A_{min}$ explicitly and then show that it yields the base-change matrix to Young's seminormal basis. The gradient of $f$ is given as $$\nabla f(A)=2GA.$$ We consider $\nabla f(A^*)=0,$ and deduce that the strictly upper-triangular part of $GA^*$ vanishes. Now, using the Cholesky decomposition for symmetric positive-definite matrices, we can find $L$ upper triangular such that $$G=L^tL.$$It follows that $LA^*$ is a diagonal matrix. Since $A^*$ is assumed to be upper unitriangular it is unique with this property and $$(A^*)^tGA^*=D,$$for some diagonal matrix $D$. Since $A^*$ is unique with this property and Young's seminormal basis can be characterised this way, see \cite{Murphy81}, we deduce that $A^*$ yields the base-change matrix to it. Moreover, we have $(A^*)^{-1}A_sA^*\geq 0,$ since the only non-negative entries in the seminormal form appear on the diagonal and are always greater than or equal to $-1$. It remains to show that the critical point $A^*$ indeed yields $A_{min}$ but this follows immediately from the positive-definite property of $G$ and the convexity of $f$.
\end{proof}

\begin{rem}
    The proof above yields a way to compute the base-change matrix to Young's seminormal basis by means of Gram-Schmidt. This suggests that instead of initialising with the polytabloid basis, we can also start with Young's seminormal form, see the proof of Theorem \ref{thm:reflection_rep_optimiser} on how to transform the problem. The Gram matrix in this case is diagonal and can be efficiently computed, see \cite[p.127]{JamesKerber1981}. Another positive side effect is that the matrices $1+s$ in Young's seminormal basis are sparser compared to the matrices in the polytabloid basis.
    Note that there are other conventions for Young's seminormal basis in the literature, which are also shown to be upper triangular but with non-trivial diagonal, see \cite{ArmonHalverson2021}. Moreover, the Gelfand--Tsetlin basis which agrees with Young's seminormal basis can be generalised and also shown to be upper triangular with regard to certain orders on the basis elements, see \cite{HaidarYacobi2025}.
\end{rem}

Computing the maximum in Problem \ref{conj:KLLeftcellTypeA} turns out to be more challenging. Since $f$ is convex, it follows that the maximum is achieved on the boundary of the feasible region. However, for specific cases when we know the Gram matrix of the Kazhdan--Lusztig basis and the representation matrices, we can show that it yields a KKT-point.

\begin{thm}\label{thm:reflection_rep_optimiser}
   For partitions $\lambda=(n,1)$, the base-change matrix to the Kazhdan--Lusztig representation yields a critical point of the optimization problem in Problem \ref{conj:KLLeftcellTypeA}.
\end{thm}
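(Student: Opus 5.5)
We will read ``critical point'' as a KKT point of the constrained problem (maximization); the feasible set $\mathcal{F}$ of Proposition~\ref{prop:compact} has boundary and $f$ is convex, so the free gradient $2GA$ need not vanish. The first step is to change coordinates so that everything is explicit. As in the remark after Theorem~\ref{thm:seminormal}, we work relative to the Kazhdan--Lusztig basis itself. For $\lambda=(n,1)$ this is the simple root basis $\alpha_1,\dots,\alpha_n$ of Example~\ref{ex:simple_roots}, ordered by the last letter order, which is the order of the index of the entry in the second row. In this basis the Gram matrix is the Cartan matrix $C$ of type $A_n$: tridiagonal, with $2$ on the diagonal and $-1$ off it. The operators are $B_i:=1+s_i$, acting by $\alpha_j\mapsto \alpha_j+\alpha_j-\langle\alpha_i,\alpha_j\rangle\alpha_i$. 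Concretely, $B_i\alpha_i=0$, $B_i\alpha_{i\pm1}=2\alpha_{i\pm1}+\alpha_i$, and $B_i\alpha_j=2\alpha_j$ otherwise. Feasible matrices are then written as $A=A_{KL}U$ with $U$ upper unitriangular, because unitriangular matrices form a group. The problem becomes: extremize $g(U)=\mathrm{Tr}(U^tCU)$ subject to $U^{-1}B_iU\ge 0$, and we must show $U=I$ is a KKT point.

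Next we linearize at $U=I+\varepsilon X$, with $X$ strictly upper triangular. Then
\[
U^{-1}B_iU=B_i+\varepsilon[B_i,X]+O(\varepsilon^2),\qquad g(U)=\mathrm{Tr}(C)+2\varepsilon\,\mathrm{Tr}(CX)+O(\varepsilon^2).
\]
Since $C$ is tridiagonal and $X$ is strictly upper triangular, $\mathrm{Tr}(CX)=\sum_j C_{j+1,j}X_{j,j+1}=-\sum_j X_{j,j+1}$. The constraints active at $I$ are the zero entries of $B_i$. These form the whole $i$-th row of $B_i$ (since $B_i\alpha_i=0$ kills column $i$, and row $i$ carries the coefficients of $\alpha_i$) apart from positions $(i,i\pm1)$, together with all off-diagonal entries outside row $i$. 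The KKT condition for a local maximum requires multipliers $\Lambda_i\ge 0$, supported on these zero entries, with
\[
\nabla g(I)=-\sum_i \nabla_X\langle \Lambda_i,[B_i,X]\rangle \quad\text{on strictly upper triangular } X.
\]
Equivalently, it suffices to show the linearized cone condition: every strictly upper triangular direction $X$ keeping $[B_i,X]_{jk}\ge 0$ on all active entries satisfies $\mathrm{Tr}(CX)\le 0$, that is, $\sum_j X_{j,j+1}\ge 0$. Farkas' lemma then produces the multipliers.

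The key computation is the entries of $[B_i,X]$ in terms of $X$. Writing $B_i=2I-E_{ii}\cdot 2+e_i(e_{i-1}+e_{i+1})^t$, with the obvious boundary modifications, we get
\[
[B_i,X]=-2[E_{ii},X]+[e_i r_i^t,X],\qquad r_i=e_{i-1}+e_{i+1}.
\]
Reading off the entry $(i,i+1)$ of the constraint $B_{i+1}$ gives, to first order, an inequality of the form
\[
X_{i,i+1}\cdot(\text{positive})+(\text{terms in }X_{i,i+2},\dots)\ge 0.
\]
For the entry in row $i+1$, column $i$, the coefficient is exactly $-X_{i,i+1}$-type, from $e_{i+1}r_{i+1}^tX-Xe_{i+1}r_{i+1}^t$. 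We expect, mirroring the $n=2$ example where feasibility reads $x\ge -1$ at the maximizer $x=-1$, that for each $j$ some active constraint yields $X_{j,j+1}\ge(\text{nonnegative combination of other active constraints})$. Summing these then gives $\sum_j X_{j,j+1}\ge 0$. The multipliers should be explicit: we will take $\Lambda_{j+1}$ to be the indicator of the single entry $(j+1,j)$, which is zero in $B_{j+1}$ when $j+1$ is not adjacent to $j$'s row. In the $n=2$ case this is the entry $(2,1)$ of $B_1$ in the paper's $A_{s_1}$ picture.

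The main obstacle is bookkeeping. We must check that the entries of $X$ beyond the superdiagonal, $X_{j,k}$ with $k\ge j+2$, contributing to the chosen active entries cancel, or else are absorbed by further multipliers on other zero entries of the $B_i$. We also need to ensure the chosen multipliers are nonnegative. We would first verify small $n$ ($n=2,3$) by hand to pin down the pattern of multipliers, then prove the general identity $\mathrm{Tr}(CX)=-\sum\langle\Lambda_i,[B_i,X]\rangle$ by a direct index computation using tridiagonality. Finally, since the constraint functions are polynomial, a remark on the constraint qualification (linear independence of the active gradients, or simply working with the Fritz John form) completes the KKT claim.
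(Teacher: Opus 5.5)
\textbf{Verdict: genuine gap.} Your framework is the paper's own. You pass to Kazhdan--Lusztig coordinates, where the Gram matrix is the Cartan matrix $C$ and $B_i=2I-2E_{ii}+E_{i,i-1}+E_{i,i+1}$. You linearize at $U=I$ and aim to exhibit nonnegative multipliers on the active entries. Your computation that the objective's first-order term is $2\,\mathrm{Tr}(CX)=-2\sum_j X_{j,j+1}$ is correct.

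The gap is that the step which constitutes the proof, producing those multipliers, is left as an expectation. The one concrete choice you announce does not work.
\begin{itemize}
\item The entry $(j+1,j)$ of $B_{j+1}$ is not zero. Since $(1+s_{j+1})\alpha_j=2\alpha_j+\alpha_{j+1}$, your own formula gives $(B_{j+1})_{j+1,j}=1$. That constraint is inactive at $I$, so complementary slackness forbids a positive multiplier there.
\item A direct computation gives $[B_{j+1},X]_{j+1,j}=0$ for every strictly upper triangular $X$. So this entry carries no first-order information at all; it is not a ``$-X_{j,j+1}$-type'' term.
\item Your $n=2$ check has the same problem. The $(2,1)$ entry of $A^{-1}A_{s_1}A$ vanishes identically in $x$, so its gradient is zero. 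The constraints that actually bind at $x=-1$ are the $(1,2)$ and $(2,2)$ entries of $A^{-1}A_{s_2}A$, i.e.\ $1-x^2\ge 0$ and $1+x\ge 0$.
\end{itemize}

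The missing observation is to use the mirror entry $(j,j+1)$ of $B_{j+1}$. This entry is zero. Row $j$ of $B_{j+1}$ is $2e_j^t$, and column $j+1$ vanishes because $(1+s_{j+1})\alpha_{j+1}=0$. Hence its linearization is exactly $[B_{j+1},X]_{j,j+1}=2X_{j,j+1}$, with no other entries of $X$ involved.

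So in your Farkas form, every linearized-feasible direction satisfies $X_{j,j+1}\ge 0$ for each $j$ separately. In multiplier form, taking multiplier $1$ on each triple $(s_{j+1},j,j+1)$ cancels $\nabla g(I)=-2\sum_j E_{j,j+1}$ exactly. This is precisely the paper's identity $[\tilde A^t_{s_{i+1}},E_{i,i+1}]=2E_{i,i+1}$ with $\mu_{s_{i+1},i,i+1}=1$.

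With this choice, the bookkeeping you anticipate for $X_{j,k}$ with $k\ge j+2$ never arises. Those coordinates appear neither in $\nabla g(I)$, because $C$ is tridiagonal, nor in the chosen constraint gradients.

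Your closing remark about a constraint qualification is also unnecessary. To show that a point is a KKT point, it suffices to exhibit the multipliers. A constraint qualification is needed only for the converse, namely that an optimum must satisfy the KKT conditions.
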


\begin{proof}
We first transform the optimization problem in Problem \ref{conj:KLLeftcellTypeA} into the following one:
Let $B$ denote the unitriangular base-change matrix from the polytabloid basis to the Kazhdan--Lusztig basis and consider$$\tilde{f}(A)=\mathrm{Trace}(A^tB^tGBA)$$subject to
$$C_s(A)=A^{-1}B^{-1}A_sBA\geq 0.$$ A maximum $A_{max}$ to  this problem can be transformed into a solution of the former problem by considering  $BA_{max}$. We want to show that the $d\times d$ identity matrix $A^*=I_d$ yields a critical point. Since we consider the maximum, it follows that it is achieved on the boundary and some of the constraints are active. This means we have $C_s(A)_{i,j}=0$ for some $i,j$. Let $\mathcal{C}$ denote the set of active constraints consisting of triples $(s,i,j)$ with $s\in S,1\leq i,j\leq d$.
We write $B^{-1}AB=\tilde{A}_s$ and $\tilde{G}=B^tGB$. The Karush-Kuhn-Tucker conditions, see \cite{BazaraaSheraliShetty2006},  state that a critical point $A$ of the problem satisfies the following identity
$$\nabla \tilde{f}(A)+\sum_{(s,i,j)\in \mathcal{C}} \mu_{s,i,j}\nabla C_s(A)_{i,j}=0,$$ with $\mu_{s,i,j}\geq 0$. We have$$\nabla \tilde{f}(A)_{u,w}=2(\tilde{G}A)_{u,w},$$for $1<u<w\leq d$, and hence $$\nabla \tilde{f}(A^*)=2\tilde{G}.$$For the constraints, we have$$\nabla C_s(A)_{u,w}=A^{-1}\tilde{A}_sE_{u,w}-A^{-1}E_{u,w}A^{-1}\tilde{A}_sA,$$where $E_{u,w}$ is the matrix with $1$ in position $(u,w)$ and zero everywhere else. We evaluate at $A=A^*$ and obtain $$\nabla C_s(A^*)=\tilde{A}_sE_{u,w}-E_{u,w}\tilde{A}_s=[\tilde{A}_s,E_{u,w}],$$where $[\cdot,\cdot]$ denotes the usual Lie bracket/commutator on matrices. For an active constraint $(s,i,j)\in \mathcal{C},$ we need to consider
$$(\nabla C_s(A^*))_{i,j}=[\tilde{A}_s,E_{u,w}]_{i,j}=(\tilde{A}_s)_{i,u}\delta_{w,j}-(\tilde{A}_s)_{w,j}\delta_{i,u}=[\tilde{A}_s^t,E_{i,j}]_{u,w},$$where $\delta_{i,j}$ denotes the Kronecker-delta. Now, assume we are in the case $\lambda=(n,1)$. Then $\tilde{G}$ is the Cartan matrix of type $A_n$, i.e. $\tilde{G}_{i,i+1}=-1$ for $1\leq i\leq n-1$. Using the action on the Kazhdan--Lusztig basis, see Example \ref{ex:simple_roots}, we have that$$[\tilde{A}_{s_{i+1}}^t,E_{i,i+1}]=2\cdot E_{i,i+1}.$$ Thus, we can conclude that $A^*$ yields a critical point by choosing $\mu_{s_{i+1},i,i+1}=1$ and setting the remaining coefficients to zero.
\end{proof}

\section{An Optimization Problem for Computing Canonical Bases of Irreducible \texorpdfstring{$\mathfrak{sl_n}$}{sl} Modules}\label{sec:canonical}

When considering the Kazhdan--Lusztig basis, it is striking to see the many similarities to canonical bases. In his groundbreaking paper \cite{Lusztig90}, Lusztig showed the existence of canonical bases for quantum groups in simply-laced types with remarkable non-negativity properties. More precisely, after setting $q=1$, one obtains bases for any finite-dimensional irreducible module such that the simple raising and lowering operators $E_i, F_i$ act non-negatively. Lusztig \cite{Lusztig91,Lusztig93} and Kashiwara \cite{Kashiwara91} extended the definition of the canonical bases using geometric and algebraic methods (respectively) to all symmetrizable Kac-Moody algebras. However, the non-negativity of the operator actions only persists in symmetric type.

Another type of canonical basis was discovered earlier by Gelfand and Tsetlin in type $A$, see \cite{GelfandTsetlin1950}. Here, one has explicit formulas for the operators $E_i, F_i$. Moreover, this basis is characterised by being orthogonal with respect to the Shapovalov form, the unique (up to scalar) contravariant symmetric form defined on any irreducible module, see also \cite{Molev2006}. Furthermore, the operators act non-negatively with respect to the Gelfand--Tsetlin basis. Consequently, both the Gelfand--Tsetlin basis and the canonical basis yield invariant cones under the action of the operators $E_i, F_i$. The base-change matrix from the Gelfand--Tsetlin basis to the canonical basis is triangular (with regard to some ordering) and coefficients only appear in the same weight space, see \cite{MolevYakimova2021}. Moreover, we can observe that the base-change matrix from the standard monomial basis to the canonical basis (similar to the $S_n$ case) is given by parabolic Kazhdan--Lusztig polynomials, see \cite{Du92,Du95}.

This leads us to the following optimization problem, analogous to the one presented in the previous section.

Fix $n \in \mathbb{N}$ and let $\lambda$ be a partition with fewer than $n$ parts. We consider the finite-dimensional irreducible $\mathfrak{sl}_n$-module $V(\lambda)$. Let $G$ be the Gram matrix of the Gelfand--Tsetlin basis of $V(\lambda)$.

\begin{optproblem}\label{prob:canonical}
    Consider the problem of maximizing or minimizing
    \[ f(B) = \operatorname{Tr}(B^t G B) \]
    subject to the constraints:
    \begin{enumerate}
        \item $B$ is lower unitriangular;
        \item The operators $E_i, F_i$ in the basis transformed by $B$ are non-negative.
    \end{enumerate}
    Let $B_{\max}$ and $B_{\min}$ be a maximizer and a minimizer of $f$ under these constraints, respectively.
\end{optproblem}

Similarly to Theorem \ref{thm:seminormal}, we can show that $B_{\min} = I$ and thus the Gelfand--Tsetlin basis yields the minimum in Problem \ref{prob:canonical}. Moreover, we can apply again Proposition \ref{prop:compact}, to show that the feasibility region is compact and a maximum exists.

\begin{problem}
    Which basis does $B_{\max}$ correspond to?
\end{problem}

\begin{rem}
    We have verified that $B_{\max}$ in Problem \ref{prob:canonical} is unique and yields the base-change matrix to the canonical basis (after suitable diagonal scaling) for all $\mathfrak{sl}_4$ modules $V(\lambda)$ indexed by partitions $\lambda \vdash m$ with $m \le 10$ and fewer than four parts. The resulting basis coincides (after the same diagonal rescaling) with the canonical basis obtained from the algorithm of Leclerc and Toffin \cite{LeclercToffin2000} for irreducible $\mathfrak{sl}_n$-modules. Beyond low rank, $B_{\max}$ need not coincide with the canonical basis and may instead detect other distinguished positive bases or geometric phenomena. Moreover, via the Shapovalov form one may identify $V(\lambda)$ with its dual, and it is therefore also natural to compare $B_{\max}$ with the dual canonical basis.
    Other potential candidates include the MV basis and dual semicanonical basis, which both differ from the dual canonical basis in general, see \cite{BKK2021}. It is known that the MV basis also yields an invariant cone. The non-negativity of the action of the operators $E_i$ is proven in \cite{BKK2021} and by other means one can show the non-negativity of the operators $F_i$.\footnote{This was communicated by Pierre Baumann and Joel Kamnitzer to the authors.} Whether the Chevalley operators act non-negatively in the dual semicanonical basis remains open. For the case $(4,4,2,2,0,0)$ for $\mathfrak{sl}_6$ these three bases differ and the base-change matrix from the MV to dual canonical and from dual canonical to dual semicanonical has only non-negative entries, respectively. In general it is conjectured that the base change from the MV basis to the dual canonical basis is given by non-negative integers, see \cite[Conjecture 2.15]{Leroux-Lapierre2025}.
    
    It might be interesting to test Problem \ref{prob:canonical} in other types. However, in other types (e.g., type $D$), we do not have an explicit construction for a basis which is simultaneously orthogonal and weight-space preserving, see \cite{Molev2006}.
\end{rem}

\newpage
\appendix
\section{Proof of Proposition \ref{prop:compact}} \label{appendix}
For convenience, we restate Proposition \ref{prop:compact} below.

\CompactProp*

The main difficulty in proving Proposition \ref{prop:compact} is to show boundedness. We use a  proof by contradiction, assuming that $\mathcal{F}$ is unbounded and constructing a non-trivial common invariant subspace. This involves a limit cone argument on subsets of the $(d-1)$-sphere  $\mathbb{S}^{d-1}\subseteq \mathbb{R}^d$. For this, we need to use the Hausdorff metric defined on subsets of $A,B\subseteq\mathbb{S}^{d-1}$ as $$d(A,B)=\max \{\sup_{a\in A}\inf_{b\in B} d(a,b),\sup_{b\in B}\inf_{a\in A} d(a,b)\},$$
where $d(a,b)=\lVert a-b\rVert$ denotes the usual metric induced by the standard Euclidean norm on $\mathbb{R}^d$.
We also use the following classical result on the topology of the space of compact subsets of a given compact set, e.g.\ the sphere $\mathbb{S}^{d-1}$, which can be, for instance, found in \cite[Theorem 3.2.4(3)]{Beer93}.  
\begin{thm}\label{thm:compact}
 The space $$\mathcal{K}(\mathbb{S}^{d-1})=\{X\subseteq \mathbb{S}^{d-1} \mid \emptyset\neq X \text{ is compact}\}$$ is compact with respect to the Hausdorff metric. In particular, any sequence of  non-empty compact subsets of  $\mathbb{S}^{d-1}$ has a converging subsequence.
\end{thm}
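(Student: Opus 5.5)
Since $\mathbb{S}^{d-1}$ is a compact metric space, the plan is to prove the general fact that for a compact metric space $(M,d)$ the hyperspace $\mathcal{K}(M)$ of nonempty compact subsets, equipped with the Hausdorff metric, is compact. A metric space is compact if and only if it is complete and totally bounded, so I would verify these two properties separately. The statement about convergent subsequences then follows, since compact metric spaces are sequentially compact.

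\emph{Total boundedness.} Fix $\varepsilon>0$. Since $M$ is compact, choose a finite $\varepsilon$-net $N\subseteq M$. I claim that the finitely many nonempty subsets of $N$ form an $\varepsilon$-net of $\mathcal{K}(M)$. Given $X\in\mathcal{K}(M)$, set
\[
N_X=\{p\in N \mid d(p,X)\le\varepsilon\}.
\]
This set is nonempty, because each $x\in X$ has a point of $N$ within $\varepsilon$. By construction every $x\in X$ lies within $\varepsilon$ of $N_X$, and every $p\in N_X$ lies within $\varepsilon$ of $X$. Hence $d(X,N_X)\le\varepsilon$.

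\emph{Completeness.} Let $(X_n)$ be a Cauchy sequence in $\mathcal{K}(M)$. Passing to a subsequence, we may assume $d(X_n,X_{n+1})<2^{-n}$. Define
\[
X=\bigcap_{n}\overline{\bigcup_{m\ge n}X_m}.
\]
This is a decreasing intersection of nonempty closed subsets of the compact space $M$, so $X$ is nonempty and compact. It remains to show $d(X_n,X)\to0$, and I would check the two halves of the Hausdorff distance separately.

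First, let $x\in X$. Then $x$ is a limit of points $x_{m_k}\in X_{m_k}$ with $m_k\to\infty$. The Cauchy bound gives $d(x_{m_k},X_n)<2^{-n+1}$ whenever $m_k\ge n$. Letting $k\to\infty$ yields $d(x,X_n)\le 2^{-n+1}$.

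Second, let $x\in X_n$. Choose inductively $x_n=x$ and $x_{m+1}\in X_{m+1}$ with $d(x_m,x_{m+1})<2^{-m}$. The resulting sequence is Cauchy in $M$, so it converges to some point $y$. This $y$ lies in $X$ and satisfies $d(x,y)\le 2^{-n+1}$.

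Together these give $d(X_n,X)\le 2^{-n+1}$. A Cauchy sequence with a convergent subsequence converges, so $\mathcal{K}(M)$ is complete.

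I expect the main obstacle to be the second half of the completeness step. A point of $X_n$ must be shown to be close to the limit set $X$, and this does not follow from taking closures alone. It requires the iterative selection of points along a summable chain, which is why I pass to a subsequence with geometrically decreasing distances first. Everything else is routine. Applying the result with $M=\mathbb{S}^{d-1}$ gives Theorem~\ref{thm:compact}.
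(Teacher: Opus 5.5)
Your proof is correct and complete. The paper does not prove this statement. It quotes it as a classical fact from \cite[Theorem~3.2.4(3)]{Beer93} and uses it as a black box in the appendix.

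What you supply is the standard self-contained proof of the Blaschke--Hausdorff selection theorem, valid for an arbitrary compact metric space $M$. It has two parts:
\begin{itemize}
\item total boundedness, obtained from the nonempty subsets of a finite $\varepsilon$-net;
\item completeness, obtained by passing to a subsequence with $d(X_n,X_{n+1})<2^{-n}$, taking the upper limit $X=\bigcap_n\overline{\bigcup_{m\ge n}X_m}$, and using the chain construction to bound both halves of the Hausdorff distance by $2^{-n+1}$.
\end{itemize}

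You rely implicitly on two standard facts that the statement presupposes. First, the Hausdorff distance is a genuine metric on $\mathcal{K}(M)$; its triangle inequality gives $d(X_n,X_m)<2^{-n+1}$ for $m>n$. Second, $d(x,X_{m+1})\le d(X_m,X_{m+1})$ for $x\in X_m$.

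The citation buys brevity. Your version buys self-containment. It also describes the limit explicitly as an upper limit, which makes the closure properties used later in the appendix immediate. For example, $K_n\subseteq\mathbb{H}_d$ for all $n$ forces $K\subseteq\mathbb{H}_d$.
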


We are now equipped to prove Proposition \ref{prop:compact}.

\begin{proof}[Proof of Proposition \ref{prop:compact}]
    We first show that $\mathcal{F}$ is a closed semialgebraic set. First note that for a unitriangular matrix $U\in \mathbb{R}^{d\times d}$, the entries of $U^{-1}$ are polynomials in the entries of $U$ (this follows for example from Cramer's rule). Now, it follows that the inequalities $$U^{-1}A_kU\geq 0$$ yield polynomial inequalities that define a closed semialgebraic set.
It remains to show that $\mathcal{F}$ is bounded. Assume that $\mathcal{F}$ is unbounded, and consider a sequence $U_n\in \mathcal{F}$ with $\lVert U_n \rVert \to \infty$ as $n\to\infty$. Here, we take the operator norm, i.e.\ $$\lVert U_n \rVert = \max_{\lVert x\rVert=1}\lVert U_nx\rVert,$$where we use the standard Euclidean norm on the right. We derive a contradiction by  constructing a non-trivial common invariant subspace for the $A_k,k=1,\dots,r$.  Write $u_{n,1},\dots,u_{n,d}$ for the columns of $U_n$ and define $v_{n,j}=\frac{u_{n,j}}{\lVert u_{n,j} \rVert}$ for $j=1,\dots,d$, yielding the columns of a matrix $V_n$. After passing to a subsequence if necessary, we can assume $V_n \to V$ for some upper triangular $V$. Since at least one $\lVert u_{n,j} \rVert$  must diverge (otherwise the sequence $U_n$ is bounded), we get a zero entry on the diagonal of $V$. Hence, $V$ is singular. Notice that $v_{n,1}=e_1$ (column with one in the first entry and zero everywhere else). It follows that $$0\subsetneq \im V=V\mathbb{R}^d\subsetneq \mathbb{R}^d.$$In the remaining part of the proof, we show that $\im V$ spans a non-trivial common invariant subspace for $A_k,k=1,\dots,r$, giving the contradiction.
We consider $$C_n=U_n\mathbb{R}_{\geq0}^d=V_n\mathbb{R}_{\geq0}^d,$$ which gives a sequence of proper invariant cones. In order to construct a limit cone $C$, we first consider the compact sets $K_n=C_n\cap \mathbb{S}^{d-1}$. By Theorem \ref{thm:compact}, we can assume that $K_n \to K$ for some non-empty compact $K\subseteq \mathbb{S}^{d-1}$. We define $C$ as the convex cone generated by $K$, i.e.\ $$C=\mathrm{cone}(K)= \Big\{ \sum_{i=1}^\ell a_i k_i \Bigm| a_i \in \mathbb{R}_{\geq0}, k_i \in K \Big\}.$$Note that $v_{n,j}\in K_n$ for all $j,n$ and since $K$ is closed it follows that $v_j \in K$ for all $j$. This implies that $$\im V\subseteq \mathrm{span}_\mathbb{R}(K)=\mathrm{span}_\mathbb{R}(C).$$ The idea is to first show that $C$ is an invariant cone and pointed, i.e.\ $C\cap -C=\{0\}$, and then show that $\im V=\mathrm{span}_\mathbb{R}(C)$. Invariance of $C$ then implies invariance of $\im V=\mathrm{span}_\mathbb{R}(C)$.

$C$ is invariant: We first show that for all $x\in K$ and all $k=1,\dots,r$, we have $A_kx\in C$. Let $x\in K$ and choose $x_n\in K_n$ with $x_n\to x$ as $n\to\infty$. Since each $C_n$ is invariant under $A_k$, we have $A_kx_n\in C_n$ for all $n$. If $A_kx=0$, there is nothing to show. Otherwise, since $A_kx_n\to A_kx\neq 0$, we have $A_kx_n\neq 0$ for all sufficiently large $n$. Hence $$\frac{A_kx_n}{\lVert A_kx_n\rVert}\in K_n$$ for all sufficiently large $n$. Passing to the limit and using that $K_n\to K$ in the Hausdorff metric, we obtain $$\frac{A_kx}{\lVert A_kx\rVert}\in K.$$ Thus $A_kx\in C$. Now let $x\in C$. Since $C=\operatorname{cone}(K)$, we can write $x=\sum_{i=1}^\ell a_i k_i$ with $a_i\in\mathbb R_{\ge 0},\ k_i\in K.$ By the first part, $A_kk_i\in C$ for all $i$. As $C$ is a convex cone, it follows that $$A_kx=\sum_{i=1}^\ell a_i A_kk_i\in C.$$ Hence $C$ is invariant.

$C$ is pointed: First note that $C\cap -C$ yields a common invariant subspace for the $A_k$. It follows that either $C\cap -C=\{0\}$ or $C\cap -C=\mathbb{R}^d$. Since the last row of each $U_n$ equals $e_d^t,$ it follows that $C_n \subseteq \{ x \in \mathbb{R}^d \mid x_d\geq 0 \}=\mathbb{H}_d$. Likewise, we can conclude that $K_n \subseteq \mathbb{H}_d$ and since this property is preserved under Hausdorff limits, it follows that $K \subseteq \mathbb{H}_d$ and thus $C \subseteq \mathbb{H}_d$. Hence, we have $C\cap -C\subseteq \{x\in \mathbb{R}^d\mid x_d=0\}\subsetneq \mathbb{R}^d$, which implies that $C\cap -C=\{0\}$. Thus, $C$ is pointed.

$\im V=\mathrm{span}_\mathbb{R}(C)$: We have already seen that $\im V\subseteq \mathrm{span}_\mathbb{R}(C).$ Thus, it suffices to show that  $\mathrm{span}_\mathbb{R}(C)\subseteq \im V.$ Note that for this it suffices to show that $K\subseteq \im V$. Let $x\in K$ and $x_n \in K_n$ with $x_n \to x$. There exist $\lambda_{n,j}\geq 0$ with $$x_n = \sum_j \lambda_{n,j}v_{n,j}.$$In order to pass to the limit $n\to \infty$ and get a similar expression for $x$, we need to show that for each $j$, $\lambda_{n,j}$ is bounded. Since $\lambda_{n,j}\geq 0$, this is equivalent to showing that the sequence defined by $$M_n=\sum_j \lambda_{n,j}$$is bounded. Assume $M_n$ is unbounded. We consider $$\mu_{n,j}=\frac{\lambda_{n,j}}{M_n}\geq0   \text{ with }\mu_{n,j}\geq0,\sum_j \mu_{n,j}=1.$$  Since the $\mu_{n,j}$ yield a bounded sequence for each $j$, we can pass to a convergent subsequence such that$$\mu_{n,j}\to \mu_j  \text{ with }\mu_j\geq0 ,\sum_j \mu_j=1.$$ It follows that $$\frac{x_n}{M_n}=\sum_j\mu_{n,j}v_{n,j}\overset{n\to \infty}{\longrightarrow} 0=\sum_j \mu_jv_j.$$Since $\mu_j\geq0 ,\sum_j \mu_j=1$, and $\lVert v_j \rVert=1$ for all $j$, there exists $i$ with $\mu_i>0$ such that $$-\mu_i v_i = \sum_{j\neq i}\mu_jv_j \in C.$$Hence, $C\cap -C \neq \{0\}$ contradicting pointedness of $C$. It follows that $M_n$ is bounded. After passing to a subsequence, we  can take $$\lambda_{n,j}\to \lambda_j \text{ with } x=\sum_j\lambda_j v_j.$$

Now, we can conclude the proof. Since $K\subseteq \im V,$ we have $C\subseteq \im (V)$. Thus, from the previous steps, we conclude that $\im V=\mathrm{span}_\mathbb{R}(C)$ yields a proper invariant subspace, contradicting the irreducibility of the matrix family $A_1,\dots,A_r$. It follows that $\mathcal{F}$ is bounded and hence a compact semialgebraic set.
\end{proof}

\section*{Acknowledgements}

This project was supported by the Australian Government through the Australian Research Council's Discovery Projects funding scheme (project DP230102982).

\newpage

\printbibliography

\end{document}